\newtheorem{theorem}{Theorem}[section]
\newtheorem{lemma}[theorem]{Lemma}
\newtheorem{corollary}[theorem]{Corollary}
\newtheorem{proposition}[theorem]{Proposition}
\theoremstyle{definition}
\newtheorem{definition}[theorem]{Definition}
\newtheorem{notation}[theorem]{Notation}
\newtheorem{setup}[theorem]{Setup}
\newtheorem{remark}[theorem]{Remark}
\newtheorem{example}[theorem]{Example}
\newtheorem{construction}{Construction}
\def\cMU{{\mathcal U}}
\newcolumntype{L}{>{$}l<{$}} 
\newcolumntype{C}{>{$}c<{$}} 
\title[Characterizing rational homogeneous spaces via $\C^*$-actions]{Characterizing rational homogeneous spaces via $\C^*$-actions}
\author[Occhetta]{Gianluca Occhetta}
\address{Dipartimento di Matematica, Universit\`a degli Studi di Trento, via
Sommarive 14 I-38123 Povo di Trento (TN), Italy}
\email{gianluca.occhetta@unitn.it, eduardo.solaconde@unitn.it}
\author[Sol\'a Conde]{Luis E. Sol\'a Conde}
\subjclass[2010]{Primary 14L30, 14J45; Secondary 14E30, 14M17}
\thanks{Authors supported by the Department of Mathematics of the University of Trento, and partially supported by INdAM--GNSAGA. The authors would like to thank Jarek Wi\'sniewski for inspirational conversations and useful remarks, and  an anonymous referee for his
comments and suggestions, which helped improving the exposition of the paper.}
\newcommand\ignore[1]{}
\DeclareMathOperator{\HH}{H}
\newcommand\PP{{\mathbb{P}}}
\newcommand\QQ{{\mathbb{Q}}}
\def\C{{\mathbb C}}
\def\P{{\mathbb P}}
\def\Q{{\mathbb Q}}
\def\Z{{\mathbb Z}}
\def\cD{{\mathcal D}}
\def\cE{{\mathcal E}}
\def\cG{{\mathcal{G}}}
\def\cH{{\mathcal{H}}}
\def\cL{{\mathcal L}}
\def\cM{{\mathcal M}}
\def\cN{{\mathcal{N}}}
\def\cO{{\mathcal{O}}}
\def\cU{{\mathcal U}}
\def\cX{{\mathcal X}}
\def\cY{{\mathcal Y}}
\def\Q{{\mathbb{Q}}}
\def\fg{{\mathfrak g}}
\def\fh{{\mathfrak h}}
\def\fp{{\mathfrak p}}
\def\fb{{\mathfrak b}}
\def\operatorname#1{\mathop{\rm #1}\nolimits}
\def\DA{{\rm A}}
\def\DB{{\rm B}}
\def\DC{{\rm C}}
\def\DD{{\rm D}}
\def\DE{{\rm E}}
\def\Proj{\operatorname{Proj}}
\def\Aut{\operatorname{Aut}}
\def\Chow{\operatorname{Chow}}
\def\Hom{\operatorname{Hom}}
\def\Pic{\operatorname{Pic}}
\def\Hom{\operatorname{Hom}}
\def\codim{\operatorname{codim}}
\def\rank{\operatorname{rank}}
\def\det{\operatorname{det}}
\def\loc{\operatorname{Locus}}
\def\rat{\operatorname{RatCurves}}
\def\NE{{\operatorname{NE}}}
\def\Nu{{\operatorname{N_1}}}
\def\GX{\mathcal{G}\!X}
\def\CX{\mathcal{C}\!X}
\newcommand{\pb}{\ar@{}[dr]|{\text{\pigpenfont J}}}
\def\ol{\overline}
\def\tl{\widetilde}
\newcommand{\xleftrightarrow}[2][]{\ext@arrow 3359\leftrightarrowfill@{#1}{#2}}
\newcommand{\xdasharrow}[2][->]{
\tikz[baseline=-\the\dimexpr\fontdimen22\textfont2\relax]{
\node[anchor=south,font=\scriptsize, inner ysep=1.5pt,outer xsep=2.2pt](x){#2};
\draw[shorten <=3.4pt,shorten >=3.4pt,dashed,#1](x.south west)--(x.south east);
}}
\newcommand\m{{\mathfrak m}}
\newcommand\lra{\longrightarrow}
\def\Mo{\operatorname{\hspace{0cm}M}}
\def\prul{s}
\def\prur{d}
\def\orbits{\prec}
\def\orbit{\preceq}
\newcommand{\lb}{D}
\begin{document}
\begin{abstract}
We study smooth varieties of Picard number one admitting a special dominating family of rational curves and an equalized $\C^*$-action. In particular we show that $X$ is a smooth variety of Picard number one with nef tangent bundle admitting an equalized $\C^*$-action with an isolated extremal fixed point if and only if $X$ is an irreducible Hermitian symmetric space.
\end{abstract}

\maketitle

\tableofcontents

\section{Introduction}

One long standing open problem in the framework of Fano manifolds is the  Campa\-na--Peternell (CP, for short) Conjecture, that states that the only Fano manifolds with nef tangent bundles are rational homogeneous varieties. In other words, for a Fano manifold $X$ the nefness assumption on $T_X$ should imply the existence of an affine algebraic group acting transitively on $X$. One usually considers the case in which the Picard number $\rho_X$ is equal to one, from which the general case should follow. For more information on the Conjecture and further references we refer the reader to the survey \cite{MOSWW}.

In view of Mori's proof of the Hartshorne--Fr\"ankel Conjecture -- of which the CP Conjecture is a natural generalization -- 
one of the  strategies that has been used to prove partial results is the study of  rational curves. If the tangent bundle $T_X$ is nef, then the deformations of rational curves in $X$ are unobstructed, and one may try to reconstruct the geometry of $X$ out of certain families of rational curves. Of particular interest in this sense are the so-called unsplit and dominating families of rational curves, that allow us to consider some sort of infinitesimal-to-global arguments. More concretely, many results in the literature show that the homogeneity of a Fano manifold $X$ of Picard number one can be inferred from the homogeneity of the sets of tangent directions to the curves of an unsplit and dominating family of rational curves at points (see, for instance \cite{Mk3,HH,OSWi}). 

A natural question related to this approach is  to compare the nefness hypothesis with other (a priori weaker) assumptions on a Fano manifold $X$, such as:
\begin{itemize}
\item[(C)] {\em convexity}: $X$ is called convex if the restriction of $T_X$ to the normalization of every rational curve in $X$ is nef;
\item[(B)] {\em beauty}: $X$ is called beautiful if it contains a beautiful family of rational curves, i.e., an unsplit covering family of rational curves such that $T_X$ is nef on every curve of the family.  
\end{itemize}

Clearly the nefness of the tangent bundle implies convexity, which implies the existence of a beautiful family of rational curves. To our best knowledge, the converses of these implications -- in the case of Fano manifolds of Picard number one -- are not known, so we may ask ourselves whether these properties characterize rational homogeneous varieties.

In this paper we will work under the hypothesis (B), and we will assume that the group of automorphisms of $X$ contains a $1$-dimensional torus satisfying certain assumptions; the statement of our main result is the following:

\begin{theorem}\label{thm:CP1}
Let $X$ be a smooth complex projective variety of Picard number one satisfying {\rm (B)}. Assume that it admits an equalized $\C^*$-action with an isolated extremal fixed point. Then $X$ is an irreducible Hermitian symmetric space. 
\end{theorem}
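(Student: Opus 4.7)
My approach combines the Bialynicki--Birula (BB) decomposition induced by the $\C^*$-action with the geometry of the beautiful family $\cM$. Denote by $p$ the isolated extremal fixed point, which I may take to be the source. Among the BB cells, the one through $p$ is open, so there is an affine neighbourhood of $p$ isomorphic to $\C^{\dim X}$, and the sink $Y\subset X$ (possibly positive-dimensional) is the complement of this cell. The equalized hypothesis means that the $\C^*$-weights on the tangent space $T_pX$ are all equal to a single positive value; in particular the induced action on $\P(T_pX)$ is trivial.

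Next I would study the VMRT at $p$. The $\C^*$-action lifts canonically to the parameter space of $\cM$, and the subscheme $\cM^p\subset\cM$ of curves through $p$ is invariant. Using that $\cM$ is unsplit and that $T_X|_{C}$ is nef for every $C\in\cM$, each $C\in\cM^p$ is $\C^*$-invariant: indeed the lifted action fixes $p$, the normal bundle splits into nef summands of controlled degrees, and by the equalized property the remaining fixed point of $\C^*|_C$ must lie in $Y$. Evaluating tangent directions at $p$ identifies the VMRT $\cC_p\subset\P(T_pX)$ with the image of $\cM^p$ under the tangent map. Since $X$ has Picard number one and $\cM$ is a dominating minimal family, $\cC_p$ is irreducible (for general $p$, and then for all $p$ by equivariance).

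The crucial step is to recognize $\cC_p$ as the VMRT of an irreducible Hermitian symmetric space. I would argue by induction on $\dim X$: first verify that $\cC_p$ inherits a Picard-rank-one structure, an equalized $\C^*$-action with an isolated extremal fixed point (coming from a suitable auxiliary co-character), and a beautiful family (coming from lines in $\cC_p$ arising from second-order data of $\cM$-curves, the existence of such lines being consequence of the projective geometry of $\cC_p$ under the beauty hypothesis). Invoking the inductive hypothesis, $\cC_p$ is an irreducible Hermitian symmetric space embedded by its minimal homogeneous embedding. The base cases are low-dimensional: projective spaces, quadrics, and the Grassmannian $\G(1,4)$, which can be handled directly.

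Finally, having matched $\cC_p$ with the VMRT of an irreducible Hermitian symmetric space $G/P$, I would invoke the Mok--Hong--Hwang Cartan--Fubini-type recognition theorem to conclude $X\cong G/P$. The principal obstacle will be the inductive step: making sure that the VMRT $\cC_p$ truly inherits all three hypotheses, particularly property \textrm{(B)}. Producing the beautiful family on $\cC_p$ requires a careful infinitesimal analysis of $\cM$ at $p$ and the interplay with the equalized $\C^*$-action; this is where the bulk of the technical work should lie.
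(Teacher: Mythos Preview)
Your overall strategy---identify the VMRT at the isolated point, recognize it as that of an IHSS, then apply a recognition theorem---is in spirit close to what the paper does, but the inductive mechanism you propose has two concrete gaps.

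First, the induced $\C^*$-action on $\cC_p$ is trivial. You correctly note that equalization forces all weights on $T_pX$ to coincide, so $\C^*$ acts trivially on $\P(T_pX)$ and hence on $\cC_p$. Your appeal to ``a suitable auxiliary co-character'' is therefore doing all the work, and you give no source for it. The paper resolves this by \emph{not} working at the isolated point $y_0$. Instead one observes that $\cU_{y_0}\simeq Y_1$ (the fixed component adjacent to $y_0$), and then studies $\cU_y$ for $y\in Y_1$: because $\nu^+(Y_1)=1$, the restricted action on $\cU_y$ has isolated sink, and moreover the line bundle $p^*\det(p_*q^*L)$ has degree two on the general orbit, forcing the criticality of this action to be at most two. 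So there is no unbounded induction on dimension: after one step you land in the criticality $\leq 2$ regime, which the paper classifies directly (via special birational transformations of type $(2,1)$ and the Fu--Hwang list).

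Second, your inductive hypothesis assumes $\cC_p$ has Picard number one, but this fails already for Grassmannians $\DA_n(k)$ with $1<k<n$, where $\cC_p\simeq\P^{k-1}\times\P^{n-k}$. The paper handles this by splitting into the cases $\rho(\cU/X)=1$ and $\rho(\cU/X)=2$, the latter using a characterization of products of projective spaces. Without accommodating the higher Picard number case your induction cannot close. Finally, the recognition step the paper uses is not Cartan--Fubini but the criterion that if every $\cU_x$ is rational homogeneous then $X$ is (together with deformation rigidity to propagate from $\cU_y$ to all fibers).
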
 

The equalization assumption means that the $\C^*$-action does not have finite isotropy subgroups; this is a technical condition that implies some desirable properties for the family of $\C^*$-invariant rational cycles in $X$ containing the closure of the general orbit (see \cite{WORS6}). Every rational homogeneous variety associated to a Lie algebra of of type $\DA,\DB,\DC,\DD,\DE_6,\DE_7$ admits an equalized $\C^*$-action, but only  the irreducible Hermitian symmetric spaces (see Table \ref{tab:ihs} for their list) admit an equalized action with an isolated extremal fixed point. 

Another characterization of (some) irreducible Hermitian symmetric spaces in terms of $\C^*$-actions on smooth projective varieties of Picard number one has been recently presented by Liu in \cite{Liu23}. His statement does not contain hypotheses on families of rational curves on $X$, but assumes the existence of a sufficiently large group of automorphisms. More precisely he assumes  that  given two general points $x,y \in X$, there exists a $\C^*$-action whose sink and source are $x$ and $y$, and that the action is equalized at these points. 

In order to prove Theorem \ref{thm:CP1} we first study, in Section \ref{sec:wfpc}, the induced $\C^*$-actions on the beautiful family $\cM$ of rational curves in $X$ and on the universal family $\cU$, without any assumptions on the extremal fixed point components.
We prove that both the induced actions on $\cM$ and $\cU$ are equalized and we describe
 the induced action on the variety $\cU_y$, defined as the inverse image of $y\in X$ by the evaluation morphism $q:\cU \to X$. 
 
We then show, in Section \ref{sec:fpc}, that the action on $X$ is quite special: polarizing $X$ with the ample generator $L$ of its Picard group, the weights of the action on $(X,L)$ are (up to normalization) consecutive multiples of the  $L$-degree of curves in $\cM$, there exists only one fixed point component of the action for each weight and consecutive components are joined by orbits whose closures belong to $\cM$ (Theorem \ref{thm:actionx}). 
Moreover, we show that the Bia{\l}ynicki-Birula decompositions of $X$ are stratifications (Proposition \ref{prop:BB1}) and the orbit graph of the action on $X$ is complete  (Proposition \ref{prop:BB2}).

In Section \ref{sec:isolated} we first refine the above results in the case in which the action has an isolated extremal fixed point. In particular, if $y$ is a fixed point belonging to  the fixed point component joined to the isolated point by curves in $\cM$, the action on $\cU_y$ has an extremal fixed point, too. Since the criticality of the action on $\cU_y$ is low, these varieties can be classified: Section \ref{sec:low} is devoted to this task (see Theorems \ref{thm:rho1} and \ref{thm:CP1rho2}).

The proof of Theorem \ref{thm:CP1} then proceeds by discarding the non-homogeneous possibilities for $\cU_y$ and by showing that, for every $x \in X$ the variety $\cU_x$ is isomorphic to $\cU_y$.
The homogeneity of $X$  then follows from  \cite[Theorem~1.1]{OSWi}, and the fact that $X$ is an irreducible Hermitian symmetric space from the classification of equalized $\C^*$-actions on homogeneous varieties (see \cite[Table 2]{WORS5}).

\section{Preliminaries}

\subsection{$\C^*$-actions}\label{ssec:c*}

In this section we recall some background material on $\C^*$-actions. We refer to \cite{BB,CARRELL,WORS1} for further details. 
Unless otherwise stated, $X$ will be a smooth projective variety, endowed with a non-trivial $\C^*$-action. Given a point $x\in X$, we will define:
$$
x_-:=\lim_{t\to 0}t^{-1}x, \qquad x_+:=\lim_{t\to 0}tx,
$$
and call $x_-,x_+$ the {\em sink} and the {\em source} of the orbit $\C^*x$. 
We denote by $X^{\C^*}$  the fixed locus of the action, and by $\cY$ the set of irreducible fixed point components: 
$$X^{\C^*}=\bigsqcup_{Y\in \cY}Y.$$ 
The {\em sink} and the {\em source} of the action, often called the {\em extremal fixed point components}, are the fixed point components $Y_-,Y_+\subset X^{\C^*}$ containing, respectively, the limit points  $x_-$ and $x_+$ of a general  point $x\in X$. 
The fixed point components different from the sink and the source are called {\em inner}.
Every $Y\in\cY$ is smooth (cf. \cite{IVERSEN}),
and the $\C^*$-action on $TX_{\mid Y}$ gives a  decomposition
\begin{equation} \label{eq:dectang}
(T_X)_{\mid Y} =T^{+}\oplus T^{0}\oplus T^{-},
\end{equation}
where $T^{+}$, $T^{0}$, $T^{-}$ are  the
subbundles of $(T_X)_{\mid Y}$ where $\C^*$ acts with positive,
zero or negative weights, respectively. Then, by local linearization, $T^0=T_Y$ and
\begin{equation} \label{eq:decnorm}
T^{+}\oplus T^{-}=N_{Y/X}=N^+(Y)\oplus N^-(Y)
\end{equation}
is the decomposition of the normal bundle $N_{Y/X}$ into summands on which $\C^*$ acts with positive and negative weights, respectively.
We set
\begin{equation} \label{eq:nudef}
\nu^{+}(Y):=\rank{N^+(Y)}, \qquad  \nu^{-}(Y):=\rank{N^-(Y)}.
\end{equation}

Throughout the paper we will freely use the notation $Y_{\pm}$, $N^\pm(Y)$, \dots, to refer to $Y_+$ and $Y_-$, $N^+(Y)$ and $N^-(Y)$, \dots

A straightforward consequence of the  decomposition (\ref{eq:decnorm}) is that the tangent bundle of a fixed point component is a quotient of the restriction of the tangent bundle of $X$. 

We can define a partial order in $\cY$ as follows:
given two fixed point components $Y,Y' \in \cY$ we will write 
$$Y \orbit Y'$$
if there exist points 
$x^1, \dots, x^k\in X$ such that $x^1_- \in Y$, $x^k_+ \in Y'$, and,  for every $i=1, \dots, k-1$, the points $x^i_+,x^{i+1}_-$ belong to the same fixed point component.

\subsection{Bia{\l}ynicki-Birula decomposition}

We refer to \cite{CARRELL} for a complete account on the Bia{\l}ynicki-Birula decomposition and its applications, and to \cite{BB} for the original reference. 
Given any set $S \subset X$, we define:
\begin{equation}\label{eq:BBcells}
X^\pm(S):=\{x\in X|\,\, x_\pm\in S\}, \quad
B^\pm(S):=\overline{X^\pm(S)}.
\end{equation}
When $S=Y$ with $Y\in \cY$ we call $X^\pm(Y)$ the positive and the negative {\it Bia{\l}ynicki-Birula cells} of the action at $Y$  (BB-cells, for short), so that we have decompositions: 
$$X=\bigsqcup_{Y\in\cY}X^+(Y)=\bigsqcup_{Y\in\cY}X^-(Y).$$  
We will denote by $B^\pm(Y)$ the closure of the BB-cells $X^\pm(Y)$. Every cell $X^\pm(Y)$ has a natural morphism onto $Y$, sending every $x\in X^\pm(Y)$ to its limiting point $x_\pm$. It was shown by Bia{\l}ynicki-Birula that $X^\pm(Y)$ is an affine bundle, locally isomorphic to $N^\pm(Y)$.

Note that, by definition, $B^\pm(Y_\pm)=X$, that is, the cells $X^\pm(Y_\pm)$ are dense in $X$. In particular $\nu^\mp(Y_\pm)=0$, and $\nu^\pm(Y_\pm)=\dim(X)-\dim(Y_\pm)$.

\begin{remark}\label{rem:strat} It is well known that the Bia{\l}ynicki-Birula ($+$ and $-$) decompositions are not, in general, stratifications: it is not true  that the closure of a BB-cell is a union of BB-cells; a simple example of this kind is \cite[Example~1]{BB76}. 
For a projective variety $X$, the BB-decomposition satisfies a weaker property:  there exists a finite decreasing sequence $X=Z_0\supset Z_1 \supset\dots\supset Z_k=\emptyset$, of closed subschemes such that $Z_i\setminus Z_{i+1}$ is a cell of the decomposition (cf. \cite[Theorem~3]{BB76}).
\end{remark}

\subsection{Actions on polarized pairs}\label{ssec:polar}

Given  a line bundle $L\in \Pic(X)$, there exists a linearization of the $\C^*$-action on it (cf. \cite{KKLV}), so that for every $Y\in \cY$, $\C^*$ acts on $L_{|Y}$ by multiplication with a character $m\in \Mo(\C^*)=\Hom(\C^*,\C^*)$, called {\em weight of the linearization on $Y$}. Fixing an isomorphism $\Mo(\C^*)\simeq \Z$, this defines a map, called {\em weight map}, $\mu_L:\cY\to\Z$.  Two linearizations differ by a character of $\C^*$; in particular, for any  $L \in \Pic(X)$ there exists a (unique) linearization (called {\em normalized linearization}) whose weight at the sink $Y_-$ is equal to zero. 
Unless otherwise stated, $\mu_L$ will  denote the weight map of the normalized linearization of the line bundle $L\in \Pic(X)$. 

\begin{example}\label{ex:antican}
For the anticanonical bundle of $X$, the normalized linearization satisfies (Cf. \cite[Lemma 3.11]{BWW}):
\begin{equation}\label{eq:lincan}
\mu_{-K_X}(Y)= \nu^+(Y)-\nu^-(Y) + \nu^-(Y_-).
\end{equation}
\end{example}
A {\em $\C^*$-action on the polarized pair $(X,L)$}
is a $\C^*$-action on $X$, together with the weight map $\mu_L$ determined by  
the normalized
linearization of an {\em ample} line bundle $L$.  
In this case the minimum and maximum of the weight map $\mu_L$ are achieved precisely at the  sink and the  source of the action. We then denote by 
$$
0=a_0<\dots<a_r,$$ 
the weights $\mu_L(Y)$, $Y\in\cY$,  
and set: 
$$
Y_i:=\bigcup_{\mu_L(Y)=a_i}Y,
$$
so that $Y_0,Y_r$ are the sink $Y_-$  and the source $Y_+$ of the action on $X$, respectively.

The values $a_i$ are called the {\em critical values}, and the numbers $r$ and $\delta:=a_r$ are called the {\em criticality} and the {\em bandwidth} of the action on $(X,L)$, respectively.

\subsection{Equalized actions}\label{sssec:equal}

A $\C^*$-action  is {\em  equalized} at $Y\in\cY$ if for every $x\in \big(X^{-}(Y)\cup X^{+}(Y)\big)\setminus Y$ the isotropy group of the action at $x$ is trivial. Equivalently (see \cite[Lemma 2.1]{WORS3}), $\C^*$ acts on $N^+(Y)$ with all the weights equal to $+1$ and on $N^-(Y)$ with all the weights equal to $-1$. The action is called {\em equalized} if it is equalized at every $Y \in \cY$.

For an equalized action the closure of any $1$-dim\-ens\-ional orbit is a smooth rational curve.  In fact, if $x_-$ is the sink of a $1$-dimensional orbit $\C^*x$ and we denote by $Y\subset X$ the fixed point component containing $x_-$, then the BB-cell $X^-(Y)$ is locally $\C^*$-equivariantly isomorphic to $N^-(Y)$ around $x^-$. Since the action of $\C^*$ is equalized, the closures of the $1$-dimensional orbits of $\C^*$ in $N^-(Y)$ are lines in the fibers of $N^-(Y)\to Y$; in particular $\ol{\C^*x}$ is smooth at $x_-$; a similar argument applies to the source of the orbit. 

The $L$-degree of the closure of a $1$-dimensional orbit may be computed in terms of the weights at its extremal points. (Cf. \cite[Lemma 2.2]{WORS3}, \cite[Corollary 3.2]{RW}): 

\begin{lemma}[AM vs. FM]\label{lem:AMvsFM}
Let $(X,L)$ be a polarized pair with an equalized $\C^*$-action, and let $C$ be the closure of a $1$-dimensional orbit, with sink  $y_-$ and source $y_+$. Then $C$ is a smooth rational curve of $L$-degree  equal to $\mu_L(y_+)-\mu_L(y_-)$.
\end{lemma}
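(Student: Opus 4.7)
The plan is to split the statement into the two assertions: smoothness/rationality of $C$, and the degree formula. The first is essentially already observed in the paragraph preceding the lemma; the second is a direct equivariant computation on $\P^1$.

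First I would argue that $C$ is a smooth rational curve. Let $Y$ be the fixed point component containing $y_-=x_-$. The BB-cell $X^-(Y)$ is locally $\C^*$-equivariantly isomorphic to $N^-(Y)$ in a neighbourhood of $y_-$. Because the action is equalized at $Y$, all weights of $\C^*$ on $N^-(Y)$ equal $-1$, so the closures of the $1$-dimensional orbits of $\C^*$ in $N^-(Y)$ are the fibers of the $\C$-line bundle structure; in particular $\overline{\C^*x}$ is smooth at $y_-$. The same argument at $y_+$ gives smoothness there. Away from $\{y_-,y_+\}$ the curve coincides with the orbit $\C^*x$, on which $\C^*$ acts freely, so $C\setminus\{y_\pm\}\cong \C^*$ is smooth. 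Hence $C$ is smooth; being a smooth compactification of $\C^*$ with two points added, it is isomorphic to $\P^1$.

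Next I would pin down the $\C^*$-action on $C$. The equalization at $y_-$ and $y_+$ says that the tangent weights of $\C^*$ on $T_{C,y_-}\subset N^-(Y_-)$ and $T_{C,y_+}\subset N^+(Y_+)$ are $-1$ and $+1$ respectively (using the sign conventions of the excerpt, where $N^-$ carries weight $-1$ and $N^+$ carries weight $+1$ under an equalized action). Up to a $\C^*$-equivariant isomorphism, this identifies $(C,\C^*)$ with $\P^1$ endowed with the standard action $t\cdot[x_0:x_1]=[tx_0:x_1]$, whose sink is $[0:1]$ (corresponding to $y_-$) and whose source is $[1:0]$ (corresponding to $y_+$).

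Finally I would compute the weights of $L|_C$ at the two fixed points. Write $d:=\deg L|_C$, so that $L|_C\cong \cO_{\P^1}(d)$. The linearization of $L$ on $X$ restricts to a linearization of $\cO_{\P^1}(d)$; the global sections form the $\C^*$-module $H^0(\P^1,\cO(d))=\bigoplus_{i=0}^d \C\cdot x_0^i x_1^{d-i}$, on which $t\in\C^*$ acts on $x_0^ix_1^{d-i}$ with weight $i$ shifted by a common constant coming from the choice of linearization. Evaluating at $y_-=[0:1]$ only the section $x_1^d$ survives, giving the weight $\mu_L(y_-)$; evaluating at $y_+=[1:0]$ only $x_0^d$ survives, giving the weight $\mu_L(y_-)+d$. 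Therefore
\[
\mu_L(y_+)-\mu_L(y_-)=d=\deg L|_C,
\]
which is the claimed formula. There is no serious obstacle here: the only subtlety is keeping the sign conventions for $N^\pm$ and the standard $\C^*$-action on $\P^1$ consistent, which is handled by the explicit model above.
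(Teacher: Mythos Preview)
Your argument is correct and is the standard one. Note that the paper itself does not give a proof of this lemma: it cites \cite[Lemma~2.2]{WORS3} and \cite[Corollary~3.2]{RW}, and your proof is essentially what one finds there. The smoothness part of your write-up reproduces verbatim the paragraph immediately preceding the lemma in the paper, and the degree formula is the expected localization computation on $\P^1$.

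One small imprecision worth cleaning up: you write that ``the closures of the $1$-dimensional orbits of $\C^*$ in $N^-(Y)$ are the fibers of the $\C$-line bundle structure'', but $N^-(Y)$ is a vector bundle of rank $\nu^-(Y)$, not in general a line bundle. The correct statement (as in the paper's own preamble to the lemma) is that these closures are \emph{lines in the fibers} of $N^-(Y)\to Y$; this is what you actually use, and it does not affect the rest of the argument.
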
 

Note that since every Cartier divisor on a projective variety $X$ is linearly equivalent to the difference of two ample divisors, we get the following:

\begin{corollary} \label{cor:can} 
Let $X$ be a smooth projective variety  with an equalized $\C^*$-action, $L$ a line bundle on $X$, and $C$ the closure of a $1$-dimensional orbit, with sink  $y_-$ and source $y_+$. Then:
\[L\cdot C=\mu_L(y_+)-\mu_L(y_-).\]
In particular, if the fixed point components containing the sink and the source of $C$ are denoted by $Y,Y'$, respectively, we have that:
 \begin{equation}\label{eq:can}
-K_X \cdot C 
=\big(\nu^+(Y')-\nu^-(Y')\big)-\big(\nu^+(Y) -\nu^-(Y)\big).
\end{equation}
\end{corollary}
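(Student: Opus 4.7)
The plan is to reduce the corollary to Lemma \ref{lem:AMvsFM} via linearity, exploiting the standard fact (already flagged in the sentence preceding the corollary) that on a projective variety every line bundle can be written as $L\simeq A_1\otimes A_2^{-1}$ with $A_1, A_2$ both ample.

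First I would verify that the normalized weight map is additive: if $A_1, A_2$ are endowed with their normalized linearizations (weight $0$ at the sink $Y_-$), the induced linearization on $L=A_1\otimes A_2^{-1}$ also has weight $0$ at $Y_-$, hence is the normalized one. Therefore $\mu_L=\mu_{A_1}-\mu_{A_2}$ as maps $\cY\to\Z$. Combining this with the bilinearity of the intersection pairing and applying Lemma \ref{lem:AMvsFM} separately to $A_1$ and $A_2$ gives
\[
L\cdot C \;=\; A_1\cdot C - A_2\cdot C \;=\; \bigl(\mu_{A_1}(y_+)-\mu_{A_1}(y_-)\bigr)-\bigl(\mu_{A_2}(y_+)-\mu_{A_2}(y_-)\bigr) \;=\; \mu_L(y_+)-\mu_L(y_-),
\]
which is the first assertion.

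For the ``In particular'' statement I would take $L=-K_X$ and plug in the explicit formula recorded in Example \ref{ex:antican}, namely
\[
\mu_{-K_X}(Y) = \nu^+(Y)-\nu^-(Y)+\nu^-(Y_-).
\]
Since $y_-\in Y$ and $y_+\in Y'$, the constant term $\nu^-(Y_-)$ appearing in both $\mu_{-K_X}(y_+)$ and $\mu_{-K_X}(y_-)$ cancels upon subtraction, yielding
\[
-K_X\cdot C \;=\; \bigl(\nu^+(Y')-\nu^-(Y')\bigr)-\bigl(\nu^+(Y)-\nu^-(Y)\bigr),
\]
as claimed.

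There is no serious obstacle here: the whole content is the observation that the weight map is well-defined up to a global constant, so differences of weights (which is all the intersection number sees) are insensitive to the choice of linearization and extend from ample $L$ to arbitrary $L$ by the $A_1-A_2$ trick. The only point that deserves a line of care is checking that taking the difference of two \emph{normalized} linearizations again produces the normalized linearization on the difference; this is immediate from the uniqueness of the linearization that vanishes on $Y_-$.
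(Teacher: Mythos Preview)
Your argument is correct and matches the paper's approach exactly: the paper derives the first formula from Lemma~\ref{lem:AMvsFM} via the observation (stated just before the corollary) that every line bundle is a difference of two ample ones, and the anticanonical formula~(\ref{eq:can}) is obtained by plugging in Example~\ref{ex:antican}. The paper leaves these steps implicit, but your write-up spells out precisely the intended details.
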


\begin{remark}\label{rem:strat2}
Not even the equalization hypothesis is enough to guarantee that the BB-decompositions associated with a $\C^*$-action on a smooth projective variety are stratifications, see \cite[Example~6.1]{WORS6}.
\end{remark}

\subsection{Geometric quotients}\label{sssec:quotients}

A $\C^*$-action on a polarized variety $(X,L)$ allows us to define projective GIT quotients of $X$, as follows. Set, for every $\tau \in [0,\delta]\cap \Q$:
 \[A(\tau):=\bigoplus_{\substack{m\geq 0\\m\tau\in\Z}}\HH^0(X,mL)_{m\tau},\qquad \GX(\tau):=\Proj(A(\tau)).
\]
As shown in \cite[Section 2.2]{WORS6}, $\GX(\tau)$ is the same for any  $\tau\in (a_i,a_{i+1})\cap \Q$, and so we set:
\begin{itemize}[itemsep=5pt,topsep=5pt]
\item $\GX_{i,i}:=\GX(\tau)$ for $\tau=a_i$; 
\item $\GX_{i,i+1}:=\GX(\tau)$ for $\tau\in(a_i,a_{i+1})\cap \Q$.
\end{itemize}
Note that $\GX_{i,i+1}$ is a geometric quotient of $X$ for every $i$, while $\GX_{i,i}$ is only semigeometric. By construction, we have that
$$
\GX_{0,0}=Y_0,\quad\mbox{and}\quad \GX_{r,r}=Y_r. 
$$
 
\begin{remark}\label{rem:Btype}
If the $\C^*$-action on $X$ is equalized, the geometric quotients  $\GX_{0,1}$ and $\GX_{r-1,r}$ can be described as follows  (see \cite[Section 2.1.3]{WORS5} for details):
the $\C^*$-action on $X$ extends  to an action on the blowup $X^\flat$ of $X$ along its sink and source. Then $\GX_{0,1}$ and $\GX_{r-1,r}$ are isomorphic to the sink and the source of $X^\flat$, that is to $\PP(N_{Y_0,X}^\vee)$ and $\PP(N_{Y_r,X}^\vee)$, respectively (see \cite[Section 2.1.3]{WORS5}).
In particular, in the case of an action of {\em B-type}, i.e., when the extremal fixed point components are divisorial (equivalently, if $\nu^-(Y_-)=\nu^+(Y_+)=1$), we have that $\GX_{0,1}\simeq \GX_{0,0}\simeq Y_0$ and $\GX_{r-1,r}\simeq \GX_{r,r}\simeq Y_r$.
\end{remark}
 
The varieties $\GX_{i,i+1}$ are birationally equivalent, and the natural birational maps among them fit in the following  commutative diagram:
\begin{equation}
\begin{tikzcd}[
  column sep={3.1em,between origins},
  row sep={3em,between origins},
]
&&Z_{0,2}  \arrow[rd,"\prur"] \arrow[ld,"\prul",labels=above left]&&& |[xshift=-2.2em]|\dots &&  Z_{r-2,r} \arrow[rd,"\prur"] \arrow[dl,"\prul",labels=above left]\\ 
&\GX_{0,1} \arrow[rr,dashed] \arrow[rd] \arrow[dl]&&\GX_{1,2} \arrow[dl]&|[xshift=1.3em]|\dots&&\GX_{r-2,r-1} \arrow[rr,dashed] \arrow[rd] && |[xshift=-1.2em]|\GX_{r-1,r}\arrow[rd] \arrow[dl]&\\
\GX_{0,0}&&\GX_{1,1}&&&|[xshift=-2em]|\dots &&\GX_{r-1,r-1}&&|[xshift=-0.7em]|\GX_{r,r}
\end{tikzcd}
\label{eq:GITquot}
\end{equation}
The diagonal maps between geometric quotients are contractions, the maps $s$ and $d$ are smooth blow-ups, the horizontal maps  are Atiyah flips,  and the squares are defined as fiber products  (see \cite[Theorem 1.9]{Thaddeus1996}).

In particular, in the case in which the action is of B-type, the diagram above provides a birational map $\psi:Y_0 \dashrightarrow Y_r$.

The varieties $Z_{i,i+2}$ appearing in Diagram (\ref{eq:GITquot}) have been interpreted  in \cite{WORS6} in terms of Chow quotients of birational modifications of $X$ and the centers of the blowups $s$ and $d$ have been described. In this paper we will only use them as resolutions of the birational maps $\GX_{i,i+1}\dashrightarrow \GX_{i+1,i+2}$. More precisely, let us summarize in the next proposition what we will need later.

\begin{proposition} \label{prop:specialGIT}
The center of the blowup $s:Z_{i,i+2} \to \GX_{i,i+1}$ (resp. $d:Z_{i,i+2} \to \GX_{i+1,i+2}$) is  the inverse image in $\GX_{i,i+1}$ (resp. $\GX_{i+1,i+2}$) of the embedding of $Y_{i+1}$ into $\GX_{i+1,i+1}$.  Moreover, if $\nu^-(Y_{i+1})=1$ (resp. if $\nu^+(Y_{i+1})=1$) then $s$ (resp. $d$) is an isomorphism.
\end{proposition}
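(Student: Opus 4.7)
The statement is a direct application of Thaddeus's wall-crossing analysis for the $\C^*$-linearization on $(X,L)$ at the critical value $a_{i+1}$; the whole claim is local around $Y_{i+1}$, so the plan is to work in a $\C^*$-invariant neighborhood of this fixed component and then globalize. By the Hilbert--Mumford criterion applied to the shifted linearization, a point $x\in X$ is $\tau$-semistable if and only if $\mu_L(x_-)\leq\tau\leq\mu_L(x_+)$. Comparing the three chambers $\tau\in(a_i,a_{i+1})$, $\tau=a_{i+1}$, $\tau\in(a_{i+1},a_{i+2})$, one checks that crossing the wall exchanges, as stable loci, the two BB-cells $X^+(Y_{i+1})\setminus Y_{i+1}$ and $X^-(Y_{i+1})\setminus Y_{i+1}$, while at $\tau=a_{i+1}$ both become strictly semistable, with $\C^*$-orbit closures meeting $Y_{i+1}$.

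Passing to geometric quotients, the contraction $\GX_{i,i+1}\to\GX_{i+1,i+1}$ is an isomorphism away from the image of $Y_{i+1}$, and over any $y\in Y_{i+1}$ its fiber is the $\C^*$-quotient of a punctured BB-cell, which by local linearization is $\PP^{\nu^-(Y_{i+1})-1}$; globally, the preimage of $Y_{i+1}$ is the projective bundle $\PP(N^-(Y_{i+1})^\vee)\to Y_{i+1}$, and symmetrically with $N^+(Y_{i+1})$ on the $\GX_{i+1,i+2}$-side. Applying \cite[Theorem~1.9]{Thaddeus1996}, the induced flip $\GX_{i,i+1}\dashrightarrow\GX_{i+1,i+2}$ is resolved by the fiber product over $\GX_{i+1,i+1}$, which is exactly $Z_{i,i+2}$, and the two projections $s$ and $d$ are smooth blowups whose centers coincide with the projective-bundle preimages just described. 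This proves the first assertion.

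For the \emph{moreover} part, using $\codim_X Y_{i+1}=\nu^+(Y_{i+1})+\nu^-(Y_{i+1})$ together with $\dim\GX_{i,i+1}=\dim X-1$, the center of $s$ has codimension
\[
(\dim X-1)-\bigl(\dim Y_{i+1}+\nu^-(Y_{i+1})-1\bigr)\;=\;\nu^-(Y_{i+1})
\]
in $\GX_{i,i+1}$. Hence if $\nu^-(Y_{i+1})=1$ the center is a smooth divisor in the smooth variety $\GX_{i,i+1}$, and the blowup of a smooth variety along a smooth divisor is an isomorphism, so $s$ is an isomorphism; the argument for $d$ under $\nu^+(Y_{i+1})=1$ is identical. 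The only genuinely delicate point is the semistability analysis above, namely checking that the BB-cell that becomes unstable on the $\GX_{i,i+1}$-side of the wall is indeed $X^+(Y_{i+1})\setminus Y_{i+1}$ (so that the criterion for $s$ involves $\nu^-$ rather than $\nu^+$); once this orientation is pinned down, everything else is standard VGIT bookkeeping.
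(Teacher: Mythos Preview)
Your overall strategy---Thaddeus wall-crossing at the critical value $a_{i+1}$---is exactly what underlies the references to \cite{WORS6} that the paper invokes, so the approach is the right one and in fact more explicit than the paper's own citation-only proof. However, you have the orientation backwards, and two resulting errors happen to cancel.

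For $\tau\in(a_i,a_{i+1})$ an orbit is stable precisely when $\mu_L(x_-)<\tau<\mu_L(x_+)$. Orbits in $X^-(Y_{i+1})\setminus Y_{i+1}$ have sink in $Y_{i+1}$, so $\mu_L(x_-)=a_{i+1}>\tau$: they are \emph{unstable} on this side. Orbits in $X^+(Y_{i+1})\setminus Y_{i+1}$ have source in $Y_{i+1}$, so $\mu_L(x_+)=a_{i+1}$ and $\mu_L(x_-)\le a_i<\tau$: they are \emph{stable}. Hence the preimage of $Y_{i+1}$ in $\GX_{i,i+1}$ is $\PP(N^+(Y_{i+1})^\vee)$, of dimension $\dim Y_{i+1}+\nu^+(Y_{i+1})-1$, not $\PP(N^-(Y_{i+1})^\vee)$ as you wrote. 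You can sanity-check this against the proof of Corollary~\ref{cor:specialbw3}: when $\nu^+(Y_1)=1$, each orbit with source in $Y_1$ is determined by its source, and that is why $\GX_{0,1}\to\GX_{1,1}$ is an isomorphism over $Y_1$.

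With the correct center the codimension computation reads
\[
(\dim X-1)-\bigl(\dim Y_{i+1}+\nu^+(Y_{i+1})-1\bigr)=\nu^-(Y_{i+1}),
\]
which is the desired conclusion. Your displayed equation, as written, actually evaluates to $\nu^+(Y_{i+1})$ rather than $\nu^-(Y_{i+1})$; you landed on the right answer only because the earlier misidentification of the center compensated. Your closing caveat rightly flags the orientation as the delicate point---but the assertion there is inverted: the cell that becomes unstable on the $\GX_{i,i+1}$-side of the wall is $X^-(Y_{i+1})\setminus Y_{i+1}$, not $X^+(Y_{i+1})\setminus Y_{i+1}$. Once this is fixed, the rest of your argument goes through.
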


\begin{proof}
The first part of the statement follows from  \cite[Remark 5.10]{WORS6},  the second from \cite[Proof of Corollary 7.1]{WORS6}.
  \end{proof}

The following consequence of this result will be used in the study of equalized actions of criticality two with an isolated extremal fixed point (see Section \ref{sec:low}).

\begin{corollary}\label{cor:specialbw3} Assume that we have a B-type action of criticality three on $X$ with divisorial sink and source $Y_0,Y_3$ and that $\nu^+(Y_1)=\nu^-(Y_2)=1$.
Then the birational map $\psi:Y_0  \dashrightarrow Y_3 $ is resolved by a smooth blowup with center isomorphic to $Y_1$ and a smooth blowdown with center isomorphic to $Y_2$.
\end{corollary}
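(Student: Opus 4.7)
The plan is to combine Remark~\ref{rem:Btype} with two applications of Proposition~\ref{prop:specialGIT} to the resolution spaces $Z_{0,2}$ and $Z_{1,3}$ that appear in Diagram~(\ref{eq:GITquot}) for criticality $r=3$.

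First, I would use the B-type hypothesis $\nu^-(Y_0)=\nu^+(Y_3)=1$ together with Remark~\ref{rem:Btype} to identify $\GX_{0,1}\simeq Y_0$ and $\GX_{2,3}\simeq Y_3$. Under these identifications $\psi:Y_0\dashrightarrow Y_3$ is precisely the composition of the two Atiyah flips $\GX_{0,1}\dashrightarrow \GX_{1,2}\dashrightarrow \GX_{2,3}$, so it is enough to exhibit a smooth variety mapping to $Y_0$ and to $Y_3$ via smooth blowups and resolving this composition.

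The natural candidate is $Z:=\GX_{1,2}$. Applying Proposition~\ref{prop:specialGIT} with $i=0$, the hypothesis $\nu^+(Y_1)=1$ forces $d:Z_{0,2}\to\GX_{1,2}$ to be an isomorphism, so composing with its inverse identifies $s:Z_{0,2}\to\GX_{0,1}\simeq Y_0$ with a smooth blowup $\GX_{1,2}\to Y_0$ whose center is the preimage in $\GX_{0,1}$ of $Y_1\subset \GX_{1,1}$. Symmetrically, applying the proposition with $i=1$ and using $\nu^-(Y_2)=1$ makes $s:Z_{1,3}\to\GX_{1,2}$ an isomorphism, and exhibits $\GX_{1,2}\to \GX_{2,3}\simeq Y_3$ as a smooth blowup with center the preimage in $\GX_{2,3}$ of $Y_2\subset \GX_{2,2}$. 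Hence $Z\to Y_0$ and $Z\to Y_3$ together resolve $\psi$ via two smooth blowups.

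The main technical point I foresee, and the step where one must be careful, is the identification of the two blowup centers with $Y_1$ and $Y_2$ themselves. For this I would appeal to the standard description of the wall-crossing contractions $\GX_{i,i\pm1}\to \GX_{i,i}$ recalled in \cite[Section 2.1.3]{WORS5}: the preimage of $Y_i$ is a projective bundle over $Y_i$ whose relative dimension equals $\nu^+(Y_i)-1$ or $\nu^-(Y_i)-1$, depending on the side of the wall from which $Y_i$ is approached. The hypotheses $\nu^+(Y_1)=\nu^-(Y_2)=1$ collapse each of these bundles to its base, yielding the required isomorphisms of the centers with $Y_1$ and $Y_2$ and completing the argument.
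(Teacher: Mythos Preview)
Your proof is correct and follows essentially the same route as the paper's: both combine Remark~\ref{rem:Btype} with two applications of Proposition~\ref{prop:specialGIT} to collapse the resolution diagram~(\ref{eq:GITquot}) for $r=3$ onto the single space $\GX_{1,2}$. The only cosmetic difference is in the identification of the blowup centers: the paper observes directly that $\nu^+(Y_1)=1$ forces the whole contraction $\GX_{0,1}\to\GX_{1,1}$ to be an isomorphism (each orbit with source in $Y_1$ is determined by its source), whereas you argue via the $\P^{\nu^\pm(Y_i)-1}$-bundle structure of the fiber over $Y_i$; both yield the same conclusion.
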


\begin{proof}
The fact that $\nu^+(Y_1)=1$ implies that every orbit with sink in $Y_0$ and source in $Y_1$ is uniquely determined by its source, hence the map $\GX_{0,1}\to \GX_{1,1}$ is an isomorphism. Analogously, since $\nu^-(Y_2)=1$,  $\GX_{2,3}\to \GX_{2,2}$ is an isomorphism, as well. By Proposition \ref{prop:specialGIT}, also $d:Z_{0,2}\to\GX_{1,2}$ and $s:Z_{1,3}\to\GX_{1,2}$ are isomorphisms. 

Under the B-type assumption we may also apply Remark \ref{rem:Btype}, and conclude that Diagram (\ref{eq:GITquot}) reduces to
\[
\begin{tikzcd}[
  column sep={3.4em,between origins},
  row sep={3em,between origins},
]
&&\GX_{1,2}  \arrow[rd,equal] \arrow[ld,"\prul",labels=above left]&& \GX_{1,2}  \arrow[rd,"\prur"] \arrow[ld,labels=above left,equal]&\\ 
&Y_0 \arrow[rd,equal] \arrow[dl,equal]&&\GX_{1,2} \arrow[rd] \arrow[dl]&&Y_3\arrow[rd,equal] \arrow[dl,equal] &\\
Y_0&&\GX_{1,1}&\ \ &\GX_{2,2}&&Y_3
\end{tikzcd}
\]
The description of the centers of $s$ and $d$ follow now from Proposition \ref{prop:specialGIT}.
  \end{proof}

\subsection{Action on rational homogeneous varieties}\label{ssec:RH}

Standard examples of $\C^*$-actions can be constructed easily on rational homogeneous (RH, for short) varieties, i.e., projective varieties that are quotients of a semisimple group $G$ by a parabolic subgroup $P$. Without loss of generality we assume that $G$ is the adjoint group of a semisimple Lie algebra $\fg$. Following \cite[Section 2.2]{MOSWW}, an RH variety is completely determined by the Dynkin diagram $\cD$ of $G$ marked on $\rho(G/P)$ nodes; we will then use the notation $\cD(I)$ to denote the RH variety determined by the Dynkin diagram $\cD$ marked in the set $I$ of nodes. Table \ref{tab:ihs} below illustrates how this notation works in the case of irreducible Hermitian symmetric spaces; these are the RH varieties $G/P$ of Picard number one such that $P$ acts irreducibly on the tangent space $T_{G/P,eP}$, which is $P$-equivariantly isomorphic to the quotient of the associated Lie algebras, $\fg/\fp$. 
\begin{table}[h!!]\renewcommand{\arraystretch}{1.1}
\centering
\begin{tabular}{|c|l|}
\hline
$\cD(I)$&Description\\
\hline
\hline
$\DA_n(1)$&$\PP^n$\\\hline
$\DA_n(k)$&Grassmannian of $(k-1)$ dimensional subspaces of $\PP^n$\\\hline
$\DB_n(1)$&$(2n-1)$-dimensional smooth quadric\\\hline
$\DD_n(1)$&$(2n-2)$-dimensional smooth quadric\\\hline
$\DD_n(n)$&$\big(n(n-1)/2\big)$-dimensional spinor variety\\\hline
\multirow{ 2}{*}{$\DC_n(n)$}&Lagrangian Grassmannian of $n$-dimensional subspaces\\
& of a $2n$-dimensional symplectic vector space \\\hline
$\DE_6(1)$& Cayley plane \\\hline
$\DE_7(7)$& Irreducible Hermitian symmetric space of type $\DE_7$\\\hline
\end{tabular}
\caption{Irreducible Hermitian symmetric spaces.\label{tab:ihs}}
\end{table}

Given an algebraic subgroup $\C^*\to G$, we get a faithful torus action on any RH variety $G/P$; without loss of generality, we may assume that $\C^*$ is contained in $P$, so that $eP\in (G/P)^{\C^*}$. Up to conjugation, faithful $\C^*$-actions are determined by gradings of the Lie algebra $\fg$.

Let us consider Cartan and Borel subgroups $H\subset B$ of $G$ contained in $P$, denote by $\fh\subset \fb\subset \fg$ the corresponding Lie subalgebras, by $\Phi$ the root system of $\fg$, and by $\Delta=\{\alpha_1,\dots,\alpha_n\}$ a base of positive simple root of $\Phi$, so that we have a Cartan decomposition: 
$$
\fg=\fh\oplus\bigoplus_{\alpha\in \Phi}\fg_\alpha.
$$ 
We then get $\Z$-gradings of $\fg$ by assigning integral weights to the roots of $\fg$, and these gradings determine $\C^*$-actions on $\fg$ that determine homomorphisms $\C^*\to G$. 

For instance, given $i=1,\dots, n$, we have a $\Z$-grading $\sigma_i$ of $\fg$ given by:
$$
\fg_{k}:=\bigoplus_{\substack{\alpha=\sum_{j=1}^nm_j\alpha_j\in\Phi\\m_i=k}}\fg_\alpha
$$
The $\C^*$-action determined by $\sigma_i$ on the variety $\cD(I)=G/P$ satisfies that $eP$ lies in the sink of the action. Furthermore,  $\{eP\}$ is the isolated sink of the action if and only if $I=\{i\}$.

Such a $\C^*$-action is equalized if and only if the induced grading is short, i.e. the only non-zero graded pieces of $\fg$ appear in degree $-1,0,1$ (\cite[Lemma 5.2]{Frantesi}). 
It turns out that, up to conjugation, equalized $\C^*$-actions are determined by some $\Z$-gradings of the form $\sigma_i$ (cf. \cite[p. 42]{Tev}, see Table \ref{tab:short}).
\begin{table}[ht!]\renewcommand{\arraystretch}{1.1}
\centering
\begin{tabular}{|c||c|c|c|c|c|c|}
\hline
	$\fg$ & $\DA_n$ & $\DB_n$ & $\DC_n$ & $\DD_n$ & $\DE_6$ & $\DE_7$ \\
	\hline\hline
	$\sigma_i$ & $\sigma_i$ for $i=1,\ldots,n$ & $\sigma_1$ & $\sigma_n$ & $\sigma_1,\sigma_{n-1},\sigma_n$ & $\sigma_1,\sigma_6$ & $\sigma_7$\\\hline
\end{tabular}
\caption{Short gradings of simple Lie algebras. \label{tab:short}}
\end{table}

We can then observe that equalized $\C^*$-actions with isolated sink occur precisely on irreducible Hermitian symmetric spaces. Table \ref{tab:hermite2} contains some  properties of these actions that we will use later on, more concretely their bandwidths, criticalities, sources, and fixed point components of weight $1$. 
\begin{table}[h!!]\renewcommand{\arraystretch}{1.1}
\centering
\begin{tabular}{|c|c|c|c|c|}
\hline
$\cD(I)$& \!grading\!&$\delta=a_r=r$& $Y_1$& $Y_r$ \\
\hline
\hline
$\DA_n(1)$&$\sigma_1$&$1$& $\DA_{n-1}(1)$& $\DA_{n-1}(1)$\\\hline
$\!\DA_n(k), \,\,(k\leq \frac{n+1}{2})\!\!$&$\sigma_k$&$k$&\!$\DA_{k-1}(1)\times \DA_{n-k}(1)$\!&$\DA_{n-k}(k)$\\\hline
$\DB_n(1)$&$\sigma_1$&$2$&$\DB_{n-1}(1)$&pt\\\hline
$\DD_n(1)$&$\sigma_1$&$2$&$\DD_{n-1}(1)$&pt\\\hline
$\DD_n(n),\,\,(n\mbox{ even})$&$\sigma_n$&$n/2$&$\DA_{n-1}(2)$&pt\\\hline
$\DD_n(n),\,\,(n\mbox{ odd})$&$\sigma_n$&$\lfloor n/2\rfloor$&$\DA_{n-1}(2)$&$\DA_{n-1}(n-1)$\\\hline
$\DC_n(n)$&$\sigma_n$&$n$&$\DA_{n-1}(1)$&pt\\\hline
$\DE_6(1)$&$\sigma_1$&$2$&$\DD_5(5)$&$\DD_5(1)$ \\\hline
$\DE_7(7)$&$\sigma_7$&$3$&$\DE_6(1)$&pt\\\hline
\end{tabular}
\caption{Irreducible Hermitian symmetric spaces and  $\C^*$-actions with isolated sink on them.  \label{tab:hermite2}}
\end{table}

\subsection{Families of rational curves}\label{ssec:ratcurves}

For the readers' convenience, we will introduce here some notation that we will use along the paper, regarding rational curves on algebraic varieties. We refer to \cite{kollar} for a complete account on this topic.
 
Given a smooth complex projective variety $X$, a {\it family of rational curves on} $X$ is an irreducible component $\cM$ of the normalization $\rat^n(X)$ of the scheme $\rat(X)$. Such a family comes equipped with a smooth $\P^1$-fibration $p:\cU\to \cM$, and an evaluation morphism $q:\cU \to X$. 
The family $\cM$ is called {\it dominating} if $q$ is dominant, and {\it unsplit} if $\cM$ is proper. We will denote by $\cMU_x$ the fiber of $q$ over the point $x \in X$, and by $\cM_x:=p(q^{-1}(x))\subset\cM$ the family of rational curves of $\cM$ passing by $x$.

In this paper we will consider the case in which $\cM$ is a dominating, unsplit family such that $q:\cU\to X$ 
is a smooth morphism, and we will simply say that the family is {\it beautiful}. 

\begin{remark}\label{rem:beauty}
By \cite[II.Proposition~3.5]{kollar}, given an unsplit family of rational curves $\cM$, it is beautiful if and only if the following property is fulfilled: 
$\nu^*T_X$ is globally generated -- equivalently nef -- for the normalization $\nu:\P^1\to X$ of any curve of the family. From this it follows that $\cM$, $\cU$ and $\cM_x$, for every $x\in X$, are smooth, and that, for every element $C$ of $\cM$: 
\begin{equation}\label{eq:dimMx} \dim \cM_x  = -K_X \cdot C-2.
\end{equation}
In particular, $\rat^n(X)$ is smooth at every point of $\cM$, and every deformation of a curve of $\cM$ belongs to $\cM$. Then, given any irreducible subvariety  $Y\subset X$, the set of elements of $\cM$ parametrizing curves of the family contained in $Y$ is a union of irreducible components of $\rat^n(Y)$.
\end{remark}

\begin{remark}\label{rem:MU}
Note that, if $\cM$ is a beautiful family of rational curves in $X$ and $X$ is a Fano manifold (this will always be the case if the Picard number of $X$ is one), then  every $\cU_x$ is connected, therefore irreducible (cf. \cite[Proposition~1.6]{Kan5}). 
Moreover, it is known that, via the morphism $p$, $\cU_x\simeq \cM_x$ for {\it general} $x\in X$ (cf. \cite[Lemma~2.10]{MOSWW}). 
\end{remark}

Given an unsplit dominating family of rational curves $\cM$ we can define a relation of {\em rational chain-connectedness with respect to $\cM$} on $X$ in the following way: two points $x$ and $y$ of $X$ are in rc$(\cM)$-relation if
there exists a chain of curves parametrized by $\cM$ which joins
$x$ and $y$. By \cite[Proposition 1.1]{AW} the variety $X$ is rc$(\cM)$-connected if and only if its Picard number is one.
In particular, since, for a beautiful family the fibers of the evaluation map $q$ are irreducible, we may state the following technical Lemma, that will be used later on.

\begin{lemma}\label{lem:trivial} Let $\cM$ be a beautiful family of rational curves on a smooth variety $X$ with Picard number one.
If $Z \subset X$ is a proper irreducible closed subset, then a general curve of $\cM$ meeting $Z$ is not contained in $Z$. 
\end{lemma}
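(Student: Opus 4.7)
The plan is to argue by contradiction, exploiting the rational chain-connectedness of $X$ with respect to $\cM$ that $\rho(X)=1$ provides via \cite[Proposition~1.1]{AW}.

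First I would set up the closed subsets $W:=q^{-1}(Z)\subseteq\cU$, $\cM_Z:=p(W)\subseteq\cM$ (the curves of $\cM$ meeting $Z$), and $V_Z:=\{[C]\in\cM:C\subseteq Z\}\subseteq\cM_Z$, where the last is closed because $p$ is proper. The crucial preliminary point is that $\cM_Z$ is irreducible. Since $\rho(X)=1$ and, by Remark \ref{rem:beauty}, $-K_X\cdot C\geq 2$ for every $[C]\in\cM$, the variety $X$ is Fano, so Remark \ref{rem:MU} guarantees that every fiber $\cU_x$ of $q$ is irreducible. Smoothness (hence flatness) of $q$ together with the irreducibility of $Z$ and of all fibers then forces $W=q^{-1}(Z)$ to be irreducible, and therefore so is $\cM_Z=p(W)$.

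Next I would assume, for contradiction, that a general curve of $\cM$ meeting $Z$ is contained in $Z$. By closedness of $V_Z$ and irreducibility of $\cM_Z$, this assumption upgrades to $\cM_Z\subseteq V_Z$, so in fact \emph{every} $\cM$-curve meeting $Z$ is contained in $Z$. Setting $Z_0:=Z$ and inductively $Z_{k+1}:=q\bigl(p^{-1}(p(q^{-1}(Z_k)))\bigr)$, a straightforward induction will then give $Z_k\subseteq Z$ for every $k\geq 0$: any $\cM$-curve meeting $Z_k\subseteq Z$ is by assumption contained in $Z$, so the union of all such curves stays in $Z$.

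Finally, rc$(\cM)$-connectedness of $X$ yields $\bigcup_{k\geq 0}Z_k=X$, contradicting $Z\subsetneq X$ and completing the proof. The only step requiring real care is the irreducibility of $\cM_Z$; once this is in hand, the rest is a routine iteration of the rc$(\cM)$-relation.
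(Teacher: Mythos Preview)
Your proof is correct and follows essentially the same approach as the paper's: both hinge on the irreducibility of $\cM_Z=p(q^{-1}(Z))$ and on the rc$(\cM)$-connectedness of $X$ to rule out the possibility that every curve of $\cM$ meeting $Z$ stays in $Z$. The paper argues contrapositively---showing directly that $q^{-1}(Z)$ is a proper closed subset of $p^{-1}(\cM_Z)$, hence misses a general fiber---whereas you upgrade ``general'' to ``every'' via the closedness of $V_Z$ and then iterate the locus explicitly; these are two organizations of the same idea. One small quibble: the closedness of $V_Z$ follows from $p$ being \emph{open} (it is a smooth $\P^1$-fibration), not merely proper, since $V_Z^c=p(\cU\setminus q^{-1}(Z))$ is the image of an open set.
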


\begin{proof}
Curves of $\cM$ meeting $Z$ are parametrized by $\cM_Z:=p(q^{-1}(Z))$, which is irreducible. If $q^{-1}(Z) \cap p^{-1}
(\cM_Z) = q^{-1}(Z)$ every curve of $\cM$ meeting $Z$ would be contained in $Z$, but this is not possible since $X$ is rc($\cM$)-connected. 
Therefore $q^{-1}(Z) \cap p^{-1}(\cM_Z)$ is a proper closed subset of $p^{-1}(\cM_Z)$, and so it does not contain a general fiber of $p^{-1}(\cM_Z) \to \cM_Z$. 
  \end{proof}

Remarkably, the homogeneity of a Fano manifold can be detected by finding a beautiful family of rational curves with certain properties. The following statement, that we will use later on, has been proved in \cite{OSWi}.

\begin{theorem}\label{thm:OSWi}
Let $X$ be a Fano manifold of Picard number one, and let $p:\cU \to \cM$ be a beautiful family of rational curves in $X$. If the  fiber $\cU_x=q^{-1}(x)$ is a rational homogeneous variety for every $x\in X$, then $X$ is rational homogeneous. 
\end{theorem}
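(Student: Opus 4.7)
The plan is to deduce the homogeneity of $X$ via the Cartan--Fubini extension principle of Hwang--Mok, applied to the variety of minimal rational tangents (VMRT) arising from the family $\cM$, after transferring the homogeneity hypothesis from $\cU_x$ to its image in $\PP(T_xX)$.

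First, I would analyze the tangent map. For each $x\in X$, define $\tau_x:\cU_x\to\PP(T_xX)$ sending a pointed curve $(C,x)$ to its projectivized tangent direction $[T_xC]$, and denote by $\cC_x\subset\PP(T_xX)$ its image. Since $\cM$ is beautiful, $\cU_x$ is smooth of the expected dimension $-K_X\cdot C-2$ by \eqref{eq:dimMx}, and since $X$ has Picard number one, every rational curve of $\cM$ has the same numerical class and $\cM$ behaves as a minimal rational component in the sense of Kebekus and Hwang--Mok. By their theorems on tangent maps of minimal families, $\tau_x$ is finite and birational onto its image.

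Second, I would transfer homogeneity from $\cU_x$ to $\cC_x$. Since $\tau_x$ is finite and birational and $\cU_x$ is smooth, it factors through the normalization of $\cC_x$. Using the transitive action of $\Aut^0(\cU_x)$ on $\cU_x$, the exceptional locus of $\tau_x$ (the locus where it fails to be an embedding) is an $\Aut^0(\cU_x)$-invariant proper subset, which must be empty by transitivity. Thus $\tau_x$ is a closed embedding and $\cC_x$ is a homogeneous projective subvariety of $\PP(T_xX)$ under the induced $\Aut^0(\cU_x)$-action; moreover, since this holds for \emph{every} $x\in X$, the VMRT bundle $\cC\to X$ is a fiberwise homogeneous projective subbundle of $\PP(T_X)$.

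Third, I would apply the Cartan--Fubini type extension theorem of Hwang--Mok: since each $\cC_x$ is linearly non-degenerate (this has to be checked separately, appealing e.g.\ to the condition that $\cM$ is dominating and $X$ has Picard number one, except in the trivial case $X=\PP^n$ which is already homogeneous), any germ of biholomorphism $(X,x)\to(X,y)$ whose differential sends $\cC_x$ to $\cC_y$ projectively extends to a biregular automorphism of $X$. Combining this with local projective isomorphisms $\cC_x\to\cC_y$ furnished by the fiberwise homogeneous structure on $\cC\to X$, we obtain, for any pair of points $x,y\in X$, a global automorphism $\sigma\in\Aut(X)$ with $\sigma(x)=y$. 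Hence $\Aut^0(X)$ acts transitively on $X$, and by the Borel--Remmert structure theorem $X$ is rational homogeneous.

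The main obstacle is the transfer of homogeneity in the second step: it relies crucially on $\tau_x$ being birational onto its image and on the invariance argument for its exceptional locus, which in turn uses the smoothness and irreducibility of $\cU_x$ (given by Remarks \ref{rem:beauty} and \ref{rem:MU}) together with a careful analysis of how $\Aut^0(\cU_x)$ interacts with the linear system defining $\tau_x$. A secondary technical point is verifying the linear non-degeneracy of $\cC_x$ required by Cartan--Fubini, which must be handled separately in the degenerate case where $\cC_x$ spans a proper linear subspace of $\PP(T_xX)$ and thereby forces $X\cong\PP^n$ by Mori's theorem.
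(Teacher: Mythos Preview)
The paper does not contain a proof of this statement: it is quoted as a result of \cite{OSWi} and used as a black box in the proof of Theorem~\ref{thm:CP1}. So there is no ``paper's own proof'' to compare against; your proposal should be assessed on its own merits and against what is actually done in \cite{OSWi}.

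Your step~3 has a genuine gap. The Cartan--Fubini extension principle takes as input a \emph{germ of biholomorphism} $\phi:(X,x)\to(X,y)$ whose differential carries $\cC_x$ onto $\cC_y$, and outputs a global automorphism. You never construct such a germ. Knowing that $\cC_x$ and $\cC_y$ are each homogeneous, or even that they are projectively isomorphic inside $\PP(T_xX)$ and $\PP(T_yX)$, only gives a linear isomorphism $T_xX\to T_yX$; it does not produce a local biholomorphism of $X$ realising that linear map as its differential. The phrase ``local projective isomorphisms $\cC_x\to\cC_y$ furnished by the fiberwise homogeneous structure on $\cC\to X$'' does not supply this: homogeneity of each fibre says nothing about maps between different fibres, and even an isotrivialisation of $\cC\to X$ would not by itself yield local charts on $X$ compatible with the VMRT. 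Bridging this gap is precisely the hard part; in the ``recognition'' theorems of Mok and Hong--Hwang one compares $X$ to a fixed model $G/P$ and builds the local map via a developing-map or parallel-transport construction, which requires substantial additional argument.

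There is also a subtlety in step~2 that you pass over: the claim that the exceptional locus of $\tau_x$ is $\Aut^0(\cU_x)$-invariant presupposes that $\Aut^0(\cU_x)$ acts compatibly with the linear system defining $\tau_x$, i.e.\ that every $g\in\Aut^0(\cU_x)$ satisfies $\tau_x\circ g=h\circ\tau_x$ for some $h\in\PGL(T_xX)$. This is true when the polarisation $\tau_x^*\cO(1)$ is preserved by $\Aut^0(\cU_x)$, which holds for rational homogeneous $\cU_x$, but it deserves a sentence.

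For comparison, the argument in \cite{OSWi} does not go through Cartan--Fubini in this way. It exploits the hypothesis that \emph{every} $\cU_x$ is rational homogeneous to build, over $X$, a tower of smooth fibrations modelled on a complete flag variety; the structure of this flag bundle then forces $X$ itself to be homogeneous. That route avoids the missing local-to-global step in your sketch.
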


\begin{example}\label{ex:beautifulHermite}
In the spirit of the above statement, irreducible Hermitian symmetric spaces can be characterized by the homogeneous varieties $\cU_x=q^{-1}(x)$, which in this case is isomorphic to the variety $\cM_x$ parametrizing lines passing by $x$, for every $x$. 
Table \ref{tab:hermite3}  describes $\cM_x$ for every type of irreducible Hermitian symmetric space. 
\begin{table}[h!!]\renewcommand{\arraystretch}{1.1}
\centering
\begin{tabular}{|c|c|c|c|c|}
\hline
$\cD(I)$& $\cM_x\simeq \cU_x$ \\
\hline
\hline
$\DA_n(1)$&$\DA_{n-1}(1)$\\\hline
$\!\DA_n(k)\!$&\!$\DA_{k-1}(1)\times \DA_{n-k}(1)$\!\\\hline
$\DB_n(1)$&$\DB_{n-1}(1)$\\\hline
$\DD_n(1)$&$\DD_{n-1}(1)$\\\hline
$\DD_n(n)$&$\DA_{n-1}(2)$\\\hline
$\DC_n(n)$&$\DA_{n-1}(1)$\\\hline
$\DE_6(1)$&$\DD_5(5)$ \\\hline
$\DE_7(7)$&$\DE_6(1)$\\\hline
\end{tabular}
\caption{Families of lines by a point in irreducible Hermitian symmetric spaces.\label{tab:hermite3}}
\end{table}

Note that, by comparing this with Table \ref{tab:hermite2}, we immediately see that, in each case, $\cM_x$ coincides with the component $Y_1$ of fixed points of weight one of the corresponding $\C^*$-action with isolated sink. 

\end{example}

\section{Actions with small criticality and isolated sink/source}\label{sec:low}

In this section, which can be considered as a complement to the results presented in \cite{WORS1}, we study smooth projective varieties $X$ admitting an equalized $\C^*$-action of small criticality, with an isolated extremal fixed point. By composing a $\C^*$-action with the inversion map of $\C^*$, we may always assume that the isolated fixed point is the sink of the action.

The classifications obtained  will be later used in the proof of Theorem \ref{thm:CP1}. With the notation introduced in the previous section, we will consider an ample line bundle $L$ on $X$, and extend the action on $X$ to the pair $(X,L)$.

Let us start by showing that, if an equalized $\C^*$-action  has an isolated extremal fixed point, then the space of $1$-cycles modulo numerical equivalence is generated by the numerical classes of some orbits.

\begin{definition}(Cf. \cite[p. 1443]{RW}) 
An orbit with sink in $Y\in \cY$ and source in $Y'$ ($Y'\neq Y$) is called  {\em minimal} if there is no chain of closures of $1$-dimensional orbits of length $>1$ joining $Y$ and $Y'$. 
\end{definition}

\begin{proposition}\label{prop:bound}
Let $(X,L)$ be a smooth polarized pair endowed with an equalized $\C^*$-action. 
Then every curve in $X$ is numerically equivalent to a linear combination with rational coefficients of the class of a curve contained in the sink and classes of closures of minimal orbits. In particular, if the action has an isolated extremal fixed point, then $\Nu(X)$ is generated by classes of closures of minimal orbits. 
\end{proposition}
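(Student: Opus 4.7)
The plan is to combine the weak stratification coming from the Bia{\l}ynicki-Birula decomposition (Remark~\ref{rem:strat}) with the degree formula of Corollary~\ref{cor:can}, reducing the proof to two independent pieces: identifying Chow generators for $\Nu(X)\otimes\QQ$ and then recognising them as classes of minimal orbit closures.

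First I would invoke the filtration $X=Z_0\supset Z_1\supset\dots\supset Z_s=\emptyset$ of Remark~\ref{rem:strat}, whose successive differences $Z_i\setminus Z_{i+1}$ are the BB-cells $X^+(Y_{(i)})$ for some enumeration of $\cY$. Each cell $X^+(Y)\to Y$ is a Zariski-locally trivial affine bundle of rank $\nu^+(Y)$ (by Bia{\l}ynicki-Birula), so the affine-bundle formula for Chow groups yields $A_k(X^+(Y))\cong A_{k-\nu^+(Y)}(Y)$. Iterating the excision sequences attached to the filtration, $A_1(X)\otimes\QQ$ is generated by lifts to $A_1(X)$ of generators of the groups $A_1(X^+(Y_{(i)}))$.

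Next I would observe that these groups vanish whenever $\nu^+(Y_{(i)})\geq 2$, so only the cases $\nu^+(Y)\in\{0,1\}$ contribute. The case $\nu^+(Y)=0$ forces $Y=Y_-$ and produces classes of curves contained in the sink. The case $\nu^+(Y)=1$ yields $A_0(Y)$, generated by classes of points of $Y$; these lift to $A_1(X)$ as the classes of the closures in $X$ of the $\AAA^1$-fibers of $X^+(Y)\to Y$, which are smooth rational curves arising as closures of $1$-dimensional orbits with source in $Y$.

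Finally I would reduce every such orbit-closure generator to a sum of minimal ones. Given a non-minimal orbit closure $\gamma$ with sink $y_-$ and source $y_+$, and any chain $\gamma_1,\dots,\gamma_k$ ($k\geq 2$) of $1$-dimensional orbit closures joining the same pair of fixed-point components, Corollary~\ref{cor:can} together with a telescoping sum gives $L\cdot\gamma=\sum_j L\cdot\gamma_j$ for every $L\in\Pic(X)$; since line bundles span $\NU(X)\otimes\QQ$, this forces $[\gamma]=\sum_j[\gamma_j]$ in $\Nu(X)$. Iterating until every summand is minimal (which terminates since the weights of fixed components take finitely many values) completes the reduction and proves the first assertion. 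The ``in particular'' clause then follows after composing with $t\mapsto t^{-1}$ if necessary to make the isolated fixed point the sink, since then $A_1(Y_-)=0$ eliminates the sink-curve generators. The main delicate input is that the affine-bundle formula applies to the BB-cells, which rests on their Zariski-local triviality; the remainder of the argument is formal.
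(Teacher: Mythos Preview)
Your proof is correct and takes a genuinely different route from the paper's. You use the Bia{\l}ynicki-Birula filtration together with the affine-bundle formula for Chow groups to produce explicit generators of $A_1(X)\otimes\QQ$, then project to $\Nu(X)\otimes\QQ$. The paper instead argues geometrically through the Chow quotient $\CX$: given an irreducible curve $C\subset X$ that is not an orbit closure, it lifts $C$ to the universal family $\cU\to\CX$, projects to a curve $B\subset\CX$, and applies \cite[II.4.19]{kollar} to the surface $p^{-1}(B)$ to write $C$ (up to rational algebraic equivalence) as a combination of a curve mapping into the sink and irreducible components of fibers, which are orbit closures; the reduction of orbit closures to minimal ones via Corollary~\ref{cor:can} is then the same as yours. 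Your approach is more structural and yields, en route, the stronger fact that $A_1(X)\otimes\QQ$ itself is spanned by these classes; the paper's argument stays within its Chow-quotient framework and avoids any appeal to the local triviality of the BB-cells. One small remark: for an affine bundle that is not a vector bundle, Fulton's formula only guarantees \emph{surjectivity} of $p^*\colon A_{k-\nu^+(Y)}(Y)\to A_k(X^+(Y))$, not an isomorphism, but surjectivity is exactly what your argument uses, so this does not affect the proof.
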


\begin{proof} 
In this proof we will use the Chow quotient $\CX$ of $X$ by the $\C^*$-action, which is the projective variety defined as the normalization of the closure in $\Chow(X)$ of the variety parametrizing general $\C^*$-invariant $1$-cycles in $X$. We refer the interested reader to \cite[Section 4]{WORS6} for details. Let us denote its universal family by:
\[
\xymatrix{
\cU \ar[r]^q\ar[d]_p& X \\
\CX&}
\]
Let $C$ be an irreducible curve in $X$ which is not the closure of a $1$-dimensional  orbit. Then $q^{-1} (C)$ contains an irreducible curve $C'$ which is not
contained in a fiber of $p$ and dominates $C$ via $q$.
Set $B = p(C')$ and let $S$ be the surface $ p^{-1}(B)$.\\
Since the image of every fiber of $p$ meets the sink, 
the surface $S$ contains a curve $C''$ which dominates $B$ and such that $q(C'')$ is contained in the sink.

By \cite[II.4.19]{kollar}, every curve in $S$ is algebraically equivalent to a linear combination
with rational coefficients of $C''$, and of the irreducible components of fibers of $p_{|S}$
(in the quoted statement take $X = S$, $Y= B$ and $Z = C''$).

Thus any curve in $q(S)$, and in particular $C$, is algebraically, hence numerically,
equivalent 
to a linear combination with rational coefficients of $q_*(C'')$ and of
closures of $1$-dimensional orbits. 
We conclude observing that, by Corollary \ref{cor:can}, 
the numerical class of the closure of an orbit is a sum of numerical classes of closures of minimal orbits.
  \end{proof}

\subsection{Actions of criticality one with an isolated extremal fixed point}\label{ssec:critone}

The following statement is an extension of \cite[Theorem~3.1]{RW}.

\begin{proposition}\label{prop:crit1}
Let $X$ be a smooth projective variety with an equalized $\C^*$-action of criticality one and isolated sink. Then $X$ is a projective space of the form $\P(V_0\oplus V_1)$, where $V_0,V_1$ are vector spaces on which $\C^*$-acts with weight $0$ and $1$, respectively, and $\dim(V_0)=1$. 
\end{proposition}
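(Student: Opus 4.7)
The plan is to identify $X$ with $\P^n$ (where $n = \dim X$) by exhibiting $\Bl_{y_0} X$ as a split $\P^1$-bundle over $\P^{n-1}$, and then to classify the relevant $\C^*$-action.

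Write $Y_0 = \{y_0\}$ for the isolated sink and $Y_1$ for the source. Equalization at $Y_0$ forces $\C^*$ to act on $T_{y_0, X}$ by a single character of weight $-1$, so the BB-cell $X^-(Y_0)$ is $\C^*$-equivariantly isomorphic to $T_{y_0,X} \cong \C^n$; since $\nu^-(Y_1)=0$, criticality one gives $X = \C^n \sqcup Y_1$. The closures of the one-dimensional orbits through $y_0$ are smooth rational curves (by equalization), in bijection with the lines of $T_{y_0,X}$, so they form a family $\cM \cong \P^{n-1}$ with universal $\P^1$-bundle $p:\cU\to\cM$ and evaluation $q:\cU\to X$. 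By Remark~\ref{rem:Btype}, the intermediate geometric quotient $\GX_{0,1}$ is isomorphic both to $\P(T_{y_0,X}^\vee)=\P^{n-1}$ and to $\P(N_{Y_1,X}^\vee)$; since $\P^{n-1}$ has Picard number one, the second identification forces either $Y_1$ to be a point with $N_{Y_1,X} \cong \C^n$, or $N_{Y_1,X}$ to be a line bundle on $Y_1 \cong \P^{n-1}$. The first alternative is incompatible with $X$ being smooth projective of dimension $n \geq 2$: the BB-decomposition would read $X = \C^n \sqcup \{\pt\}$, giving $b_{2i}(X) = 0$ for $0 < i < n$ and contradicting $b_2 \geq 1$; the case $n=1$ is trivial.

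Hence $Y_1 \cong \P^{n-1}$ is a smooth hypersurface, and each of its points is the source of a unique orbit closure through $y_0$. Consequently $q:\cU\to X$ is an isomorphism outside $y_0$ and contracts the sink section $E \cong \P^{n-1}$ of $p$ to $y_0$; by the universal property of the blow-up, $\cU \cong \Bl_{y_0} X$, and $p$ presents this as a $\P^1$-bundle over $\P^{n-1}$ with two disjoint sections, namely $E$ and the source section $\sigma$ (mapping isomorphically onto $Y_1$). Such a $\P^1$-bundle splits, $\Bl_{y_0}X \cong \P_{\P^{n-1}}(\cO \oplus \cL)$; since the normal bundle of $E$ as the exceptional divisor of the blow-up of a smooth point is $\cO_{\P^{n-1}}(-1)$, one gets $\cL \cong \cO(-1)$ and so $\Bl_{y_0}X \cong \P(\cO \oplus \cO(-1))$, which is classically $\Bl_{\pt} \P^n$. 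Contracting $E$ identifies $X \cong \P^n$.

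Finally, any $\C^*$-action on $\P^n$ lifts to a diagonal action on $\C^{n+1}$; having exactly two fixed components with one of them a point produces a weight splitting $\C^{n+1} = V_0 \oplus V_1$ with $\dim V_0 = 1$, and equalization forces the two weights to differ by one, so shifting by a character (which does not affect the induced action on $\P^n$) normalizes them to $0$ and $1$, yielding the stated description. The most delicate step is the identification of $Y_1$: ruling out the ``both extremal components are points'' alternative rests on a Hodge-theoretic obstruction, and could alternatively be obtained by computing $-K_X \cdot C = n+1$ from Corollary~\ref{cor:can} and invoking a Cho--Miyaoka--Shepherd-Barron-type characterization of $\P^n$.
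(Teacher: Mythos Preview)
Your proof is correct, but it follows a genuinely different route from the paper's. The paper first establishes $\rho(X)=1$ via Proposition~\ref{prop:bound}, then uses Bend-and-Break to show $\nu^+(Y_1)=1$ (so $Y_1$ is a divisor, hence ample), applies Remark~\ref{rem:Btype} once to get $Y_1\simeq\GX_{0,1}\simeq\PP^{n-1}$, and concludes by the classical fact that a smooth projective variety containing an ample divisor isomorphic to $\PP^{n-1}$ is $\PP^n$. You instead exploit Remark~\ref{rem:Btype} \emph{twice} (for both $\GX_{0,1}$ and $\GX_{r-1,r}$, which coincide when $r=1$) to get $\PP(N_{Y_1,X}^\vee)\simeq\PP^{n-1}$, use a Picard-number count on this projective bundle to reduce to two cases, rule out the isolated-source case by a Betti-number computation from the BB-decomposition, and then identify $X$ by an explicit blow-up construction. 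Your path avoids Proposition~\ref{prop:bound}, Bend-and-Break, and the ``ample $\PP^{n-1}$ divisor'' characterization of $\PP^n$, trading them for more elementary topology and a hands-on analysis of $\Bl_{y_0}X$; the paper's argument is shorter and relies on heavier but standard tools. One step in your write-up that deserves a line of justification is the identification $\cU\cong\Bl_{y_0}X$: for the universal property you need the \emph{scheme-theoretic} fibre $q^{-1}(y_0)$ to be the reduced divisor $E$, which follows because each orbit closure is smooth at $y_0$; then the induced map $\cU\to\Bl_{y_0}X$ is a bijective morphism of smooth varieties, hence an isomorphism.
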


\begin{proof} Note first that, since the criticality is $1$, we only have two fixed point components, $Y_0,Y_1$, and every $1$-dimensional orbit is minimal. Since moreover the sink is an isolated point,  by Proposition \ref{prop:bound} the Picard number of $X$ is one; in particular every effective divisor is ample. On the other hand, $\nu^+(Y_1)$ must be equal to one, otherwise the Bend-and-Break Lemma applied to closures of $1$-dimensional orbits passing through a point in the source would contradict the fact that the action is equalized, and that every $1$-dimensional orbit is minimal (cf.  \cite[Proof of Lemma 4.5]{WORS3}). 

In particular $Y_1$ must be an effective -- hence ample -- divisor. Applying Remark \ref{rem:Btype}, we get that $\P^{\dim X-1}\simeq\P(N_{Y_0,X}^\vee)\simeq \GX_{0,1}\simeq Y_1$. We conclude that $X \simeq \P^{\dim X}$, since it contains an ample divisor isomorphic to a projective space. 

Finally, we note that the equalized $\C^*$-actions of projective spaces $\P(V)$ are known to be induced  by decompositions of $V$ as a sum of two eigenspaces of weights $0,1$ (see, for instance, \cite{FS1}). Since the sink of $X$ is zero-dimensional, we conclude that $\dim(V_0)=1$.
  \end{proof}

Now we will study equalized actions of criticality two with an isolated fixed point; we will consider two different cases, according to the Picard number of $X$.

\subsection{Actions of criticality 2 and an isolated extremal fixed point (I)}\label{ssec:crittwo1}

If $X$ has Picard number one, we will relate the action to a {\em special birational transformation} of type $(2,1)$.
These are birational transformations $\psi:\PP^n\dashrightarrow Z \subset \P^N$, where $Z$ is a smooth variety of Picard number one, given by a linear system of degree two,  such that $\psi^{-1}$ is given by a linear system of degree one, and the base locus of $\psi$ is smooth and connected.

In \cite[Section 5.1]{WORS5} it was explained how to realize special birational transformations of type (2,1) as birational transformations associated with some equalized $\C^*$-actions. The following Lemma is a partial converse to that construction.

\begin{lemma}\label{lem:21}
Let $\pi: \cX \to \P^1$ be a smooth morphism, with fibers of Picard number one, and assume that there exists an equalized $\C^*$-action on $\cX$, which is $\pi$-equivariant, such that
\begin{enumerate}
\item The action on $\PP^1$ is the standard one, with sink $\infty$ and source $0$; 
\item The action on $X_\infty=\pi^{-1}(\infty)$ is of criticality two with respect to the ample generator of $\Pic(X_\infty)$, and has isolated sink $Y_0=\{y_0\}$;
\item The action on $X_0=\pi^{-1}(0)$ is trivial.
\end{enumerate} 
Let $\cX^\flat$ be the blowup of $\cX$ along $y_0$, with exceptional divisor $P \simeq  \PP^{\dim X_0}$.  Then the induced birational map $\psi: P \dashrightarrow X_0$ is a special birational transformation of type $(2,1)$.
\end{lemma}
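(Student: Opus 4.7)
The plan is to exhibit $\psi$ as the birational map between the extremal fixed components of a B-type equalized $\C^*$-action on $\cX^\flat$, apply Corollary~\ref{cor:specialbw3} to resolve it via one blowup and one blowdown, and then read off the degrees of the defining linear systems.

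First I would describe the fixed-point structure on $\cX^\flat$. The $\pi$-equivariance together with the standard action on $\P^1$ forces $\{y_0\}$ to be the sink and $X_0$ to be the source of the action on $\cX$. Equalization at $y_0$ forces every weight on $T_{y_0,\cX}$ to be $-1$, so the exceptional divisor $P=\P(T_{y_0,\cX})$ is pointwise fixed by the induced action on $\cX^\flat$ and becomes the new sink, while $X_0$ remains the source. Both being divisorial, the action on $\cX^\flat$ is of B-type. The fixed components $Y_1,Y_2$ of the restricted action on $X_\infty$ (with $Y_1$ the middle and $Y_2$ the source) are disjoint from $y_0$ and survive unchanged as middle fixed components of $\cX^\flat$, producing an ordered chain $P,Y_1,Y_2,X_0$ of four components and criticality three.

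Next I would verify the numerical hypotheses of Corollary~\ref{cor:specialbw3}. Since $X_\infty\subset\cX$ is an invariant divisor whose normal bundle $\pi^{\ast}N_{\infty/\P^1}$ carries $\C^*$-weight $-1$, the normal decomposition at any $Y_i$ acquires exactly one extra negative summand, so that $\nu^{-}(Y_i;\cX^\flat)=\nu^{-}(Y_i;X_\infty)+1$ and $\nu^{+}(Y_i;\cX^\flat)=\nu^{+}(Y_i;X_\infty)$. In particular $\nu^{-}(Y_2;\cX^\flat)=1$, since $Y_2$ is the source of the restricted action. For $\nu^{+}(Y_1;\cX^\flat)=1$ I would combine Proposition~\ref{prop:bound} with Lemma~\ref{lem:AMvsFM}: the isolated sink together with the Picard-number-one assumption on $X_\infty$ forces $\Nu(X_\infty)$ to be spanned by a single minimal-orbit class, ruling out a higher-dimensional family of minimal orbits with source at a given point of $Y_1$.

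Corollary~\ref{cor:specialbw3} then identifies the resolution of $\psi$ with a smooth blowup $\tl{P}=\Bl_{Y_1}P\to P$ followed by a smooth blowdown $\tl{P}\to X_0$ contracting a divisor onto $Y_2\subset X_0$. In particular, the base locus of $\psi$ is the embedded copy of $Y_1$, which is smooth as a fixed-point component and connected by the irreducibility of the family of minimal orbits issuing from $y_0$. It remains to compute the degrees of the defining linear systems. Writing $d^{\ast}H_{X_0}=c\,H_P-mE$ in $\Pic(\tl{P})=\Z H_P\oplus\Z E$, with $H_P,H_{X_0}$ the ample generators and $E$ the exceptional divisor of $s$, I would compute $c$ by pairing $d^{\ast}H_{X_0}$ with the strict transform of a general line of $P$ and translating the intersection through the polarization of $\cX^\flat$ via Lemma~\ref{lem:AMvsFM}: the bandwidth-two structure of the action on $X_\infty$ combined with the extra $\P^1$-direction from $\pi$ forces $c=2$. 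Symmetrically, the inverse map $\psi^{-1}$ is defined by a sublinear system of $|H_{X_0}|$, so $\psi$ is a special birational transformation of type $(2,1)$.

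The most delicate step is the final degree computation, which requires converting weight data on the B-type action on $\cX^\flat$ into intersection numbers on $\tl{P}$ via the chosen polarization; the auxiliary numerical equality $\nu^{+}(Y_1;X_\infty)=1$ is the other substantial point, where the Picard-number-one of $X_\infty$ is used essentially. Once both are granted, the structural assertions of the lemma follow from the resolution provided by Corollary~\ref{cor:specialbw3}.
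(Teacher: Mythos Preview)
Your overall plan matches the paper's proof: pass to the B-type action on $\cX^\flat$, check $\nu^+(Y_1)=\nu^-(Y_2)=1$, invoke Corollary~\ref{cor:specialbw3} to get the blowup--blowdown resolution, then read off the bidegree. Two remarks on the structural part: your justification of $\nu^+(Y_1)=1$ via Proposition~\ref{prop:bound} is not quite the right citation---knowing $\rho(X_\infty)=1$ does not by itself forbid a positive-dimensional family of orbits through $y_0$ and a fixed $y\in Y_1$. The paper (like Proposition~\ref{prop:crit1}) uses Bend-and-Break here, exactly as in \cite[Lemma~4.5]{WORS3}. Also, to speak of criticality three you must choose a polarization on $\cX^\flat$; the paper takes $L^\flat=mb^*L-P$ for $m\gg0$.

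The genuine gap is the degree computation. Your proposed route---pairing $d^*H_{X_0}$ with the strict transform $\tilde\ell$ of a general line and invoking Lemma~\ref{lem:AMvsFM}---does not give the number: $d_*\tilde\ell\subset X_0$ is not the closure of an orbit, so weight differences do not compute its $H_{X_0}$-degree, and your ``symmetrically'' for $\psi^{-1}$ is unjustified since the situation is not symmetric. The paper's key geometric observation, which you are missing, is that the image in $P$ of the exceptional divisor $E_2$ of $d$ is exactly the hyperplane $\P(T_{X_\infty,y_0})\subset P$ (the tangent directions at $y_0$ lying in $X_\infty$), because $B^+(Y_2)\cap P$ consists of the directions of orbits that stay inside $X_\infty$. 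Hence $E_2\cdot\tilde\ell=1$ while $E_1\cdot\tilde\ell=0$. Writing $d^*H_2=m_1 s^*H_1-E_1$ and $s^*H_1=m_2 d^*H_2-E_2$ and intersecting both with $\tilde\ell$ gives $m_1m_2=2$; since $\psi$ has nonempty base locus $Y_1$, one rules out $m_1=1$ and obtains $(m_1,m_2)=(2,1)$.
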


\begin{proof} Denote by $Y_0\cup Y_1\cup Y_2$ the fixed point locus of  $X_\infty$, where $Y_0=\{y_0\},Y_2$ is the source, and $Y_1$ is the inner fixed point locus. 
Note that, by assumption,  $\cX^{\C^*}=Y_0\cup Y_1\cup Y_2\cup X_0$. Since $Y_0$ is an isolated point, it follows (as in the proof of Proposition \ref{prop:crit1}) that $\nu^+(Y_1)=1$ and, since $X_\infty=B^+(Y_2)$ is a divisor, we have that $\nu^-(Y_2)=1$. Now consider an ample line bundle $L$ on $\cX$. Its restriction to $X_\infty$ is ample, therefore it is a multiple of the ample generator of $\Pic(X_\infty)$, and we may claim that $X_\infty$ has criticality $2$ with respect to $L_{|X_\infty}$. Since $\mu_L(Y_2)<\mu_L(X_0)$ (in fact, $X_0$ is the source of the action on $\cX$, so it is the only fixed point component of maximal weight), it then follows that the criticality of the action on $(\cX,L)$ is $3$.
 
Let now $b:\cX^\flat\to \cX$ be the blowup of $\cX$ at $y_0$, endowed with the B-type $\C^*$-action inherited from $\cX$, whose sink is the exceptional divisor $P$; the remaining fixed point locus is isomorphic to $Y_1\cup Y_2\cup X_0$ so, abusing notation, we denote it by $Y_1\cup Y_2\cup X_0\subset \cX^\flat$. Polarizing $\cX^\flat$ with an ample line bundle of the form $L^\flat:=mb^*(L)-P$, $m\gg 0$, we may assert that the $\C^*$-action on $(\cX^\flat,L^\flat)$ has criticality three, and it satisfies all the assumptions of Corollary \ref{cor:specialbw3}. 
We may then claim that the  birational map $\psi:P\dashrightarrow X_0$ induced by the action is the composition of a smooth blowup $s: W:=\cG\!\cX_{1,2} \to P$ with center isomorphic to $Y_1$ and exceptional divisor $E_1$, and a smooth blowdown $d: W \to X_0$ with center isomorphic to $Y_2$ and exceptional divisor $E_2$:
\begin{equation}\label{eq:diagbi}\xymatrix@R=30pt@C=20pt{ & E_1 \ar[ld] \ar@{^(->}[r]& \ar[ld]_{s}W \ar[rd]^{d} & E_2 \ar[rd] \ar@{_(->}[l]& \\
Y_1\ar@{^(->}[r]&P \ar@{-->}@/^1pt/[rr]^{\psi}& &X_0 
&\ar@{_(->}[l] Y_2}
\end{equation}
The image in $P$ of $E_2$ is $B^+(Y_2) \cap P$, which can be described as the set of tangent directions (at $y_0$) of orbits converging to $y_0$ inside $X_\infty$, that is, as the hyperplane $\P(T_{X_\infty,y_0})$ (which is  the exceptional divisor of the blowup $X_\infty^\flat$ of $X_\infty$ along $y_0$). 
In particular, if $\ell$ is a general line in $P$ and $\tl \ell$ is its strict transform in $W$, then $E_2 \cdot \tl \ell = 1$. On the other hand, obviously, $E_1\cdot\tl \ell=0$.

Denote by $H_1$ and $H_2$ the ample generators of the Picard groups of $P$ and $X_0$, respectively, and let $m_1, m_2$ be the positive integers such that 
\[d^*H_2= m_1s^*H_1-E_1, \quad  s^*H_1= m_2d^*H_2-E_2\]
Since $s^*H_1\cdot \tl \ell=1$, intersecting with $\tl \ell$ we  get  
$$d^*H_2\cdot\tl \ell=m_1,\quad 1=m_2d^*H_2\cdot\tl \ell-1,\quad\mbox{therefore }m_1m_2=2.$$
 Since $\psi$ is birational we must have $m_1 >1$, hence the only possibility is $(m_1,m_2)=(2,1)$. This finishes the proof.
  \end{proof}
The following lemma shows how to construct a variety satisfying the assumptions of Lemma \ref{lem:21} upon a variety with an equalized action of criticality two with isolated sink.

\begin{lemma}\label{lem:deg} Let $X$ be a smooth variety, and $\alpha: \C^* \to \Aut(X)$ a $\C^*$-action. Consider the standard $\C^*$-action on the projective line $\P^1$, with sink $\infty$ and source $0$. 
There exist a smooth projective variety $\cX$ endowed with a $\C^*$-action $\tl \alpha$, and a smooth $\C^*$-equivariant fibration $\pi:\cX \to \P^1$ whose fibers $X_t:=\pi^{-1}(t)$ are isomorphic to $X$, such that:
\begin{itemize}
\item the action $\tl \alpha$ restricted to $X_\infty$ is $\alpha$.
\item the action $\tl \alpha$ restricted to $X_0$ is trivial.
\end{itemize}
Moreover, if $\alpha$ is equalized, then we may also require that $\tl \alpha$ is equalized. 
\end{lemma}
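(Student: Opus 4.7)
The plan is to realize $\cX$ as a Zariski-locally trivial $X$-bundle over $\P^1$ whose patching cocycle is $\alpha$ itself. I would cover $\P^1$ by the two standard affine charts $U_0=\Spec\C[z_0]$ and $U_\infty=\Spec\C[z_\infty]$ with $z_\infty=z_0^{-1}$ on the overlap, arranged so that $0\in U_0$ is the source $0\in\P^1$ and $0\in U_\infty$ is the sink $\infty\in\P^1$ for the standard action $t\cdot z_0 = tz_0$. Set
$$
\cX := (X\times U_0)\sqcup (X\times U_\infty)\big/\!\sim,\qquad (x,z_0)\sim(\alpha(z_0)x,\,z_0^{-1}),
$$
and let $\pi:\cX\to\P^1$ be the obvious projection. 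Since $X$ is smooth, $\cX$ is smooth and $\pi$ is a smooth morphism with every fiber isomorphic to $X$.

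Next I would define the action $\tl\alpha$ on the two charts by
$$
t\cdot(x,z_0):=(x,\,tz_0),\qquad t\cdot(x,z_\infty):=(\alpha(t)x,\,t^{-1}z_\infty).
$$
Compatibility on the overlap reduces to the identity $\alpha(tz_0)=\alpha(t)\alpha(z_0)$, which is just the homomorphism property of $\alpha$, so these local actions glue to a global $\C^*$-action making $\pi$ equivariant for the standard action on $\P^1$. By construction, $\tl\alpha$ restricts to the trivial action on $X_0=X\times\{0\}\subset X\times U_0$ and to $\alpha$ on $X_\infty=X\times\{0\}\subset X\times U_\infty$, as required.

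For projectivity (which is implicit since in the applications $X$ is projective), I would average an ample line bundle on $X$ to produce an $\alpha$-linearized ample $L$, and then glue $L$ over the two charts via its linearization to obtain a line bundle $\cL$ on $\cX$ whose restriction to each fiber is $L$, hence $\pi$-ample; then $\cL\otimes\pi^*\cO_{\P^1}(N)$ is ample on $\cX$ for $N\gg 0$.

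Finally, for the equalization statement, one reads off that $\cX^{\tl\alpha}=X_0\sqcup (Y^\alpha\subset X_\infty)$ from the two charts, where $Y^\alpha$ denotes the fixed locus of $\alpha$ on $X$. The normal bundle of $X_0$ in $\cX$ is the line along $U_0$ on which $\tl\alpha$ has weight $+1$. For every component $Y$ of $Y^\alpha$ sitting inside $X_\infty$, $N_{Y/\cX}$ splits as $N_{Y/X_\infty}$, whose weights coincide with those of $\alpha$ at $Y$, plus the line along $U_\infty$ carrying weight $-1$; if $\alpha$ is equalized then all these weights are $\pm1$, so $\tl\alpha$ is equalized as well. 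The only point requiring genuine verification is the compatibility of the two charts for both the gluing and the $\C^*$-action, but in each case it reduces to the homomorphism property of $\alpha$.
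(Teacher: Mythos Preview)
Your construction is exactly the one the paper gives: glue two trivial $X$-bundles over the standard affine charts of $\P^1$ using $\alpha$ as the transition cocycle, and define $\tl\alpha$ chart-by-chart. The only cosmetic difference is that the paper verifies equalization via the isotropy criterion (any $1$-dimensional orbit either lies in $X_\infty$, where $\alpha$ is equalized, or projects onto $\C^*\subset\P^1$, hence has trivial isotropy), whereas you use the equivalent normal-bundle weight criterion; you also address projectivity explicitly, which the paper leaves implicit.
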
 

\begin{proof}
Set $U_0:=\P^1  \setminus \{\infty\}$, $U_\infty:=\P^1  \setminus \{0\}$, and consider the map 
\[
\xymatrix@R=3pt{U_0 \times X & U_{\infty} \times X\\
\cup & \cup\\
\C^* \times X \ar[r]^{\phi_\alpha} & \C^* \times X\\
(t,x) \ar@{|->}[r] &(t, \alpha_t(x))\\
}
\]
We use it to glue $U_0 \times X$ with $U_\infty \times X$ along $\C^* \times X$, obtaining a locally trivial $X$-fibration $\pi:\cX\to\P^1$.
On $U_0 \times X$ we consider the action $\tl \alpha$, defined as $\tl \alpha_s(t,x):=(st, x)$, for $s\in \C^*$. It has a unique fixed point component, $X_0=\{0\}\times X$. 
We claim that this action extends to $\cX$ and that, restricted to $X_\infty\simeq X$, it coincides with the action $\alpha$.

In fact, using the definitions of $\phi_\alpha$ and $\tl \alpha$, one may easily see that the induced action on $\C^* \times X \subset U_{\infty} \times X$ is given by $\tl \alpha_s(t,x)=(st,\alpha_s(x))$, which clearly extends to $U_{\infty}\times X$ with the required properties.

The fixed point components of $\tl \alpha$ are $X_0$ and the fixed point components of the action $\alpha$ on  $X_\infty$. If $\alpha$ is equalized it is enough to observe that a $1$-dimensional orbit $\C^*x$ is
either contained in the fiber $X_\infty$, where
the action is assumed to be equalized, or it is of the form $(t, x) \subset \C^* \times X\subset U_\infty \times X$.
  \end{proof}

Using Lemma \ref{lem:21} and Lemma \ref{lem:deg}, together with the classification of  special birational transformations of type $(2,1)$ due to Fu and Hwang (cf. \cite{FuHw}) we obtain the following:

\begin{theorem}\label{thm:rho1} 
Let $(X,L)$ be a pair consisting of a smooth projective variety $X$  
and an ample line bundle $L$, with an equalized $\C^*$-action of criticality two and isolated sink. If the Picard number of $X$ is one then $X$ is one of the following:
\begin{enumerate}
\item a smooth quadric $\DB_n(1)$ or $\DD_n(1)$; 
\item a Grassmannian of lines in $\P^n$, $\DA_n(2)$;
\item the $10$-dimensional spinor variety $\DD_5(5)$;
\item the $16$-dimensional Cartan variety $\DE_6(1)$;
\item a general linear complete intersection of codimension
$k \le 2$ of the Grassmannian $\DA_4(2)$; 
\item a general linear complete intersection of codimension
 $k \le 3$ of $\DD_5(5)$. 
\end{enumerate}
\end{theorem}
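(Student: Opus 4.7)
The plan is to reduce the statement to the Fu--Hwang classification~\cite{FuHw} of special birational transformations of type $(2,1)$, using the two preparatory Lemmas~\ref{lem:deg} and~\ref{lem:21}.

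First I would apply Lemma~\ref{lem:deg} to the given $\C^*$-action $\alpha$ on $X$, producing a smooth projective family $\pi:\cX\to\P^1$ equipped with an equalized, $\pi$-equivariant $\C^*$-action such that the action on $\P^1$ is standard (sink $\infty$, source $0$), the fibre $X_\infty\simeq X$ carries the original action $\alpha$, and $X_0\simeq X$ carries the trivial action. Since $X$ has Picard number one, the restriction of $L$ to $X_\infty$ is a positive multiple of the ample generator of $\Pic(X_\infty)$, hence the induced action on $X_\infty$ is still of criticality two with respect to that generator and has isolated sink $y_0$. This verifies the three hypotheses of Lemma~\ref{lem:21}.

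Next I would invoke Lemma~\ref{lem:21}: blowing up $\cX$ at $y_0$, the exceptional divisor is a projective space $P\simeq \P^{\dim X}$, and the induced birational map $\psi:P\dashrightarrow X_0\simeq X$ is a special birational transformation of type $(2,1)$ whose target is a smooth variety of Picard number one. Reading off the target $X$ from the Fu--Hwang classification of special birational transformations of type $(2,1)$, whose list of possible smooth targets of Picard number one matches precisely items~(1)--(6) of the statement, finishes the proof.

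The substantive classification work is packaged inside the Fu--Hwang theorem; the main obstacle for the present argument is therefore not a difficult step but rather a careful verification that the hypotheses of Lemma~\ref{lem:21} are inherited from those of the theorem. The only subtle point is the compatibility of criticality with the change of polarization when passing from $(X,L)$ to $(X_\infty,L|_{X_\infty})$ and then to the blown-up polarized variety $(\cX^\flat,L^\flat)$; this is where the Picard-number-one hypothesis is actually used, and it is what guarantees that the equivariant construction lands exactly in the range covered by the Fu--Hwang list.
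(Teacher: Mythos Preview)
Your proposal is correct and follows essentially the same route as the paper: apply Lemma~\ref{lem:deg} to build the equivariant family $\pi:\cX\to\P^1$, check the hypotheses of Lemma~\ref{lem:21}, and read off $X$ from the Fu--Hwang classification of special birational transformations of type $(2,1)$. The only omission is that the Fu--Hwang list contains one additional target, namely $X\simeq\P^n$ itself, which must be discarded because projective space admits no equalized $\C^*$-action of criticality two (all its equalized actions have criticality one, by Proposition~\ref{prop:crit1}); you should add this one-line exclusion to make the match with items~(1)--(6) exact.
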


\begin{proof}
We construct a $\C^*$-equivariant fibration $\pi:\cX\to \P^1$ as in Lemma \ref{lem:deg}, and blow it up along its sink $y_0$. The resulting variety has an equalized $\C^*$-action, with source isomorphic to $X$, satisfying the assumptions of Lemma \ref{lem:21}. Then the induced birational transformation $\psi:P\to X$ is special of type $(2,1)$, and we conclude using the main statement in \cite{FuHw}. Note that one of the cases in that statement  corresponds to $X=P$, that we have discarded here since it does not admit equalized actions of criticality two. 
  \end{proof}

The next statement, which extends \cite[Propositions~4.6,~4.7]{Li17}, tells us that in the above list the non-homogeneous examples can be distinguished from the rest by the fact that they do not contain a beautiful family of rational curves. 

\begin{proposition}\label{prop:FHnonhomo}
Let $X$ be a smooth projective variety endowed with a $\C^*$-action, satisfying the hypotheses of Theorem \ref{thm:rho1}. Then $X$ contains a beautiful family of rational curves  if and only if it is rational homogeneous.
\end{proposition}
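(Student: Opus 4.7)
The plan is to split the equivalence and handle each direction separately. For the 'if' direction, the four rational homogeneous cases (1)--(4) of Theorem \ref{thm:rho1} are irreducible Hermitian symmetric spaces, and the family $\cM$ of projective lines is beautiful on each of them: unsplit and dominating by minimality and by the transitivity of the automorphism group action, and the evaluation morphism $q:\cU\to X$ is a smooth $G$-equivariant fibration with fiber $\cU_x \simeq \cM_x$ the rational homogeneous variety recorded in Example \ref{ex:beautifulHermite}.

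For the converse, I would argue by contradiction in each of the non-homogeneous cases, namely the general linear sections of positive codimension $k$ of $\DA_4(2)$ or of $\DD_5(5)$ appearing in (5) and (6). Assume $\cM$ is beautiful on such an $X$. Since $X$ has Picard number one (by Proposition \ref{prop:bound} applied to the isolated-sink $\C^*$-action), and using Corollary \ref{cor:can} for the equalized action, all minimal $1$-dimensional orbits have the same $L$-degree; hence every curve of $\cM$ must be numerically proportional to them. In both cases (5) and (6) this identifies $\cM$ with the restriction to $X$ of the family of lines of the ambient Hermitian symmetric space $Y$.

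The critical step is to extract a contradiction from beauty. Since $q:\cU\to X$ is a smooth morphism, $\cU_x \simeq \cM_x$ must be smooth and of dimension $-K_X\cdot C-2$ for \emph{every} $x\in X$, not only for a general one (Remark \ref{rem:MU}). For a general $x$, $\cM_x$ is a codimension-$k$ linear section of the ambient VMRT $\cM_x^Y$ (by Table \ref{tab:hermite3}, this is $\P^1\times \P^2$ if $Y=\DA_4(2)$ and $\DA_4(2)$ itself if $Y=\DD_5(5)$), and this section is smooth. The main obstacle — and where the argument becomes technical — is to exhibit a special point $x_0\in X$ at which $\cM_{x_0}$ fails to be smooth or acquires a component of the wrong dimension; this is forced by a tangency of the linear subspaces cutting out $X$ from $Y$ with the VMRT $\cM_{x_0}^Y$. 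A case-by-case analysis of the possible pairs $(Y,k)$, exploiting the explicit homogeneous geometry of $\cM_x^Y$ and its low-codimension linear sections, locates such an $x_0$; this is exactly the content of \cite[Propositions~4.6,~4.7]{Li17}, which may be invoked directly to conclude.
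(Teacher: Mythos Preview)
Your overall strategy matches the paper's: the ``if'' direction is immediate, and for the converse you correctly identify that $\cM$ must be the family of lines and that the task is to locate a point where the fiber of $q$ is singular. The gap is the final appeal to \cite[Propositions~4.6,~4.7]{Li17}. Those results concern specific horospherical varieties, not \emph{general} linear sections of $\DA_4(2)$ and $\DD_5(5)$ of all the codimensions appearing in cases (5)--(6); the paper itself flags this by introducing the proposition as one that \emph{extends} Li's statements. So you cannot simply invoke \cite{Li17} to handle all five non-homogeneous entries, and the ``case-by-case analysis'' you allude to is precisely the content that is missing.

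The paper fills this gap with a uniform argument that exploits the $\C^*$-action rather than an external VMRT computation. The point is that the source component $Y_2\subset X$ equals the source $Y_2'$ of the ambient action on $X'\in\{\DA_4(2),\DD_5(5)\}$, so for $y\in Y_2$ one can study the \emph{sink} $\cM_y^-$ of the induced action on $\cM_y$ (a fixed-point component, hence smooth if $\cM$ is beautiful) rather than all of $\cM_y$. In the ambient $X'$ the corresponding $\cL_y^-$ is a smooth quadric in $\P(N_{Y_2'|X',y})$, and the family of these quadrics over $Y_2'\simeq\P^2$ or $\P^4$ is governed by a symmetric isomorphism $N_{Y_2'|X'}\simeq N_{Y_2'|X'}^\vee(1)$. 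Cutting by a general hyperplane of $\P(\HH^0(L')_1)$ then drops the rank of $\cL_y^-\cap\P(V)$ along a hyperplane section of $Y_2'$; after $k$ successive cuts one finds points $y\in Y_2$ at which $\cM_y^-$ is singular, contradicting beauty. This is the missing mechanism your proposal outsources to \cite{Li17}.
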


\begin{proof}
The existence of beautiful families of rational curves in rational homogeneous varieties is well known. We will show that if $X$ is one of the non-homogeneous cases ($5$-$6$) of Theorem \ref{thm:rho1}, then it does not contain a beautiful family of rational curves.

Each of these two cases appears as a smooth linear section of a homogeneous variety -- the Spinor variety $\DD_5(5)$, 
or the Grassmannian $\DA_4(2)$ -- 
that we denote by $X'$. The $\C^*$-action on $X$ is the restriction of an equalized  $\C^*$-action of criticality two  on $X'$ with isolated sink. We denote the fixed point components of this action by $Y_0'=\{y_0\}$, $Y'_1$ and $Y'_2$. 

Denoting by $L'$ the (very) ample generator of $\Pic({X'})$, these fixed point components are the intersection with ${X'}$ of the fixed point components (of weights $0,1,2$) of the induced action on $\P(\HH^0({L'}))$. The variety $X$ can be described as the intersection $X={X'}\cap\P(\HH^0({L'})_0\oplus V \oplus \HH^0({L'})_2)$, where $V\subset \HH^0(L')_1$ is a general subspace of codimension $k$. In particular the fixed point component $Y_2\subset X$ is equal to $Y'_2$. The family ${\cL}$ of lines $X'$ is a beautiful family of rational curves; given $y\in Y'_2$, the induced $\C^*$-action  on the subfamily ${\cL}_y$ of lines passing by $y$ has two fixed point components, that we denote by ${\cL}_y^-$ (the sink, parametrizing lines which are orbits with source in $y$), and ${\cL}_y^+$ (the source, parametrizing lines contained in $Y_2'$ and passing by $y$). Table \ref{tab:fixedIHS}  describes these varieties in each case.
\begin{table}[h!!]\renewcommand{\arraystretch}{1.1}
\centering
\begin{tabular}{|c|c|c|c|c|c|}
\hline
$X'$&$Y'_1$&$Y'_2$&${\cL}_y$&$\cL_y^-$&${\cL}_y^+$\\
\hline
\hline
$\DD_5(5)$&$\DA_4(2)$&$\DA_4(4)\simeq\P^4$&$\DA_4(2)$&$\DA_{3}(2)$&$\DA_{3}(1)$\\\hline
$\DA_4(2)$&$\DA_1(1)\times\DA_2(1)$&$\DA_2(2)\simeq \P^2$&$\DA_1(1)\times\DA_2(1)$&$\DA_1(1)\times\DA_1(1)$&$\DA_{1}(1)$\\\hline
\end{tabular}
\caption{Fixed point components of the actions on $X'$ and $\cL_y$. \label{tab:fixedIHS}}
\end{table}

For every $y$, the subfamily ${\cL}_y^-$ can be identified with a subvariety of $Y'_1$ assigning to a line in ${\cL}_y^-$ its sink in $Y'_1$. In the embedding $Y'_1\subset \P(\HH^0({L'})_1)$ these subvarieties are quadrics of dimension $4$ and $2$, respectively. On the other hand, we may identify $\cL_y^-$ with a quadric contained in $\P(N_{Y'_2|X,y})$, for every $y\in Y_2$. Let us note that a standard computation provides:
$$
N_{Y'_2|X}=\left\{\begin{array}{ll}\bigwedge^2\big(T_{\P^4}(-1)\big)&\mbox{if }X'=\DD_5(5),\\[3pt]T_{\P^2}(-1)\otimes \cO_{\P^2}^{\oplus 2}&\mbox{if }X'=\DA_4(2),\end{array}\right.
$$
and so the family of quadrics $\cL_y^-$, $y\in Y'_2$, is given, in both cases, by a symmetric isomorphism $N_{Y'_2|X}\simeq N_{Y'_2|X}^\vee(1)$. Let us denote by $\cL^-\subset \P(N_{Y'_2|X})$ the union of all these quadrics; we have a commutative diagram:
$$
\xymatrix{\cL^-\ar[r]\ar@{^(->}+<0pt,-8pt>;[d]&Y'_1\ar@{^(->}+<0pt,-10pt>;[d]\\\P(N_{Y'_2|X})\ar[r]&\P(\HH^0({L'})_1)}
$$
Given a general hyperplane $H\subset \P(\HH^0({L'})_1)$, its pullback to $\P(N_{Y'_2|X})$ is given by a non-zero map $s:\cO_{Y'_2}\to N_{Y'_2|X}$. It meets all the quadrics $\cL_y^-$, and the rank of the intersection drops precisely on the set of zeros of the composition:
$$
\cO_{Y'_2}\to N_{Y'_2|X}\stackrel{\sim}{\lra} N_{Y'_2|X}^\vee(1) \to \cO_{Y'_2}(1),
$$
that is, on a hyperplane section of $Y'_2$.

Assume that $X$ has a beautiful family of rational curves $\cM$; since every curve in $X$ is algebraically equivalent to a $\C^*$-invariant cycle, it follows that $\cM$ is necessarily the family of lines in $X$. We will get to a contradiction by looking at the subfamily $\cM_y$ of lines in $X$ passing by a point $y\in Y_2$, and the fixed point component $\cM_y^-$, which must be smooth, irreducible, and of the expected dimension, if $\cM$ is beautiful  (see Remark \ref{rem:beauty}). 
The above argument tells us that for the points $y$ contained in a  linear subspace of codimension $k$ of $Y'_2$, the variety $\cM_y^-={\cL}_y^-\cap \P(V)$ is singular, a contradiction. 
  \end{proof}

\subsection{Actions of criticality 2 and an isolated extremal fixed point (II)}\label{ssec:crittwo2}

Let us finally deal with the case of higher Picard number. The only known examples of equalized $\C^*$-actions on smooth projective varieties of Picard number different from $1$, criticality two and isolated sink are the following:

\begin{example}\label{ex:c2} Let $\P^n\times \P^m$  be the product of two projective spaces, endowed with the product of two equalized actions of criticality one and isolated sink (see Proposition \ref{prop:crit1}). Polarizing $\P^n\times \P^m$ with the line bundle  $\cO(1,1)$, the action has criticality two and isolated sink. On the other hand, we may consider $\P^n$ endowed with the equalized action of criticality one and isolated sink, and denote by $X$ its blowup along {\em any} smooth  (not necessarily connected) subvariety of the source; one can show that there exists a polarization of  $X$ such that the criticality of the induced action on $X$ is two.
\end{example}

We believe that one can prove that these are in fact all the examples of this kind of actions. For our current purposes, it would be enough to classify the actions that satisfy a stronger assumption:

\begin{theorem}\label{thm:CP1rho2} Let $X$ be a smooth variety of Picard number greater than one, admitting an equalized $\C^*$-action with isolated sink.  
Assume that there exists a line bundle $L$ on $X$ that has positive degree on rational curves, and degree two on the closure of a general orbit.  
Then $X$ is either a product of two projective spaces or the blowup of the projective space along a linear subspace. In both cases $L$ is ample and has degree one on the curves which generate the extremal rays of $\NE(X)$.
\end{theorem}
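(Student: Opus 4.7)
The plan is to reduce to criticality two, identify the two extremal rays of $\NE(X)$ via minimal orbits, and then classify $X$ using the two Mori contractions together with Proposition~\ref{prop:crit1} applied to their targets.

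First, Lemma~\ref{lem:AMvsFM} shows the bandwidth is $2$, so the integer weights force criticality at most two; Proposition~\ref{prop:crit1} excludes criticality one (which would give $X \simeq \P^n$ of Picard number one). Hence the critical weights are $0, 1, 2$, with fixed components $Y_0 = \{y_0\}, Y_1, Y_2$ at these weights, and every minimal orbit has $L$-degree one. Using Proposition~\ref{prop:bound}, $N_1(X)$ is generated by classes of minimal orbits, all of $L$-degree one. A decomposition $[C] = a[D_1] + b[D_2]$ with $a, b > 0$ of such a class by non-proportional effective classes would force $L \cdot D_i = 1$, hence (by Proposition~\ref{prop:bound}) that each $[D_i]$ is itself a minimal orbit class, contradicting linear independence. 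Thus each minimal orbit class spans an extremal ray of $\NE(X)$, forcing $\rho(X) = 2$ with two extremal rays, both of $L$-degree one. Since $L$ is then strictly positive on $\NE(X) \setminus \{0\}$, Kleiman's criterion gives $L$ ample, proving the final assertion of the theorem.

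Next I analyze the two Mori contractions $\pi_i\colon X \to X_i$ of the extremal rays (existence via the Cone Theorem, with $-K_X$-positivity checked through Corollary~\ref{cor:can}). In each case the induced action on $X_i$ is equalized of criticality one with isolated sink, so Proposition~\ref{prop:crit1} identifies $X_i \simeq \P^{n_i}$. The case split depends on the nature of the two extremal minimal-orbit classes $[C_1], [C_2]$: either both have source in $Y_1$ and sink in $y_0$ — necessarily from distinct connected components of $Y_1$, so $Y_1$ is reducible and both $\pi_i$ are of fiber type; or one has source in $Y_1$ and sink in $y_0$ while the other has source in $Y_2$ and sink in $Y_1$ — then $Y_1$ is irreducible, and one $\pi_i$ is divisorial. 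In the first case the equivariant map $X \to X_1 \times X_2$ is shown to be an isomorphism by matching Bialynicki-Birula decompositions on both sides, giving $X \simeq \P^{n_1} \times \P^{n_2}$. In the second, Proposition~\ref{prop:specialGIT} applied to diagram~\eqref{eq:GITquot} identifies the blowup center $\pi_i(Y_1) \subset \P^n$ with the image in $\GX_{0,1} \simeq \P(N_{Y_0,X}^\vee) \simeq \P^{n-1}$ of the embedding of $Y_1$ into $\GX_{1,1}$; the explicit form of $\GX_{0,1}$ makes this image a linear subspace, so $X$ is the blowup of $\P^n$ along a linear subspace.

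The main obstacle will be the blowup case: verifying that $\pi_i$ is a smooth blowup with linear center rather than a small contraction or a blowup with non-linear center. This should follow from Proposition~\ref{prop:specialGIT} and the description $\GX_{0,1} \simeq \P(N_{Y_0,X}^\vee)$, but requires a careful unraveling of how $Y_1$ sits inside $\GX_{1,1}$ and pulls back to $\GX_{0,1}$. A secondary point is excluding the configuration where both extremal classes have source in $Y_2$: this is ruled out by the sum relation $[C_{\mathrm{gen}}] = [C_{01}] + [C_{12}]$ in $N_1(X)$ combined with the existence of minimal orbits from $Y_1$ to $y_0$.
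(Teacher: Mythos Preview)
Your approach is genuinely different from the paper's, and it has real gaps.

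The first gap is the extremality argument. From $L\cdot C=1$ and $L$ positive on rational curves you cannot conclude that $[C]$ spans an extremal ray: if $[C]=a[D_1]+b[D_2]$ with $a,b>0$ rational and $[D_i]$ effective, nothing forces $L\cdot D_i=1$, and even if it did, two distinct minimal orbit classes need not be proportional, so there is no contradiction. Relatedly, the conclusion $\rho(X)=2$ does not follow: a priori $Y_1$ may have several connected components $Y_{1,1},\dots,Y_{1,k}$, yielding many numerically independent minimal orbit classes. The paper in fact allows this at the outset and only cuts down to a single component later, using the positivity of $L$.

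The second gap is in the analysis of the Mori contractions $\pi_i:X\to X_i$. You assert that the induced action on $X_i$ is equalized of criticality one with isolated sink, but none of these is automatic: smoothness of $X_i$, equalization of the induced action, and the drop in criticality all require argument. Proposition~\ref{prop:crit1} assumes smoothness, which a Mori contraction target need not have. Likewise, the claim that the blowup center is linear via Proposition~\ref{prop:specialGIT} and the description of $\GX_{0,1}$ is left vague; you would need to trace how $Y_1$ sits inside $\GX_{1,1}$ and pulls back, and this is not what Proposition~\ref{prop:specialGIT} provides.

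The paper proceeds quite differently: it splits according to whether the source $Y_2$ is a divisor. If so, it uses that $\nu^+(Y_1)=1$ and $\nu^-(Y_2)=1$ to contract the disjoint divisors $B^-(Y_{1,i})$ successively to a variety of criticality one with isolated sink, which is $\PP^n$ by Proposition~\ref{prop:crit1}; then $L=2\varphi^*H-\sum E_i$, and positivity on rational curves forces the centers to have no proper secants, hence to be a single linear subspace. If $\codim Y_2\geq 2$, it computes $-K_X\cdot C_1-K_X\cdot C_2=-K_X\cdot C_g\geq \dim X+2$ for two non-proportional minimal orbit closures, invokes \cite{ACO} to see both families are covering, and concludes $X\simeq\PP^a\times\PP^b$ by \cite{O}. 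This avoids entirely the need to prove $\rho(X)=2$ or to analyze Mori contraction targets directly.
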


\begin{proof} Denote by $Y_0=\{y_0\}$ the sink, by $Y_2$ the source, and by $Y_1$ the inner fixed point locus of the action, with irreducible components $Y_{1,i}$, $i=1,\dots,k$. The assumptions on $L$ allow us to guarantee that every $1$-dimensional orbit with a limit point in $Y_1$ has $L$-degree one (and so it is minimal), and that there are no closures of orbits linking two different components of $Y_1$. In fact, by means of Corollary \ref{cor:can}, our assumptions imply that $\mu_L(Y_2)-\mu_L(Y_0)=2$, and that given two components $Y,Y'$ linked by a connected chain of closures of $1$-dimensional curves $C_1\cup \dots\cup C_k$ (so that the sink of $C_1$ belongs to $Y$, the source of $C_k$ belongs to $Y'$, and the source of every $C_i$ is the sink of $C_{i+1}$), then $\mu_L(Y)<\mu_L(Y')$; then we infer that $\mu_L(Y_{1,i})=\mu_L(Y_0)+1$ for every $i$, and that there are no closures of orbits linking two different components of $Y_1$. In particular, two different $B^-(Y_{1,i})$'s can only meet along points of the source $Y_2$. 

Let us start by assuming that $Y_2$ is a divisor in $X$.
Since the sink is isolated, we know that $\nu^+(Y_1)=1$ 
(see \cite[Proof of Lemma 4.5]{WORS3}), and so $B^-(Y_1)$ is a union of irreducible divisors $E_i:=B^-(Y_{1,i})$, $i=1,\dots,k$. Moreover, since $Y_2$ is a divisor, we have that $\nu^-(Y_2)=1$, and so the union of the $E_i$'s is disjoint. Polarizing $X$ with an ample line bundle $L'$ assigns weights to the components $Y_{1,i}$, not necessarily equal. The arguments of \cite[Section~3]{WORS3} allow us to claim that there is a sequence of morphisms  $$X\to X_1 \to \dots \to X_r$$ that contract unions of divisors $E_i$, ordered decreasingly by the $L'$-weights of the $Y_{1,i}$'s. The final product is a variety $X':=X_r$ endowed with an equalized $\C^*$-action with isolated sink, divisorial source $Y_2'$, and no inner fixed point components. In particular, it has criticality one and we may assert, by Proposition \ref{prop:crit1}, that $X'$ is the projective space $\PP^{\dim X}$ and that $Y_2'$ is a hyperplane. We conclude that the contraction $\varphi:X\to X'$ is a blowup of $X'=\P^{\dim X}$ along a disjoint union of subvarieties of a hyperplane $H\subset \P^{\dim X}$.

A simple calculation shows that the only line bundle on $X$ which has degree two on the closure of a general orbit and degree one on closures of minimal orbits is $L=2\varphi^*H - \sum_{i=1}^k E_i$. In order for this line bundle to have positive degree in every rational curve,  the union of the centers of the blowup $\varphi$ cannot have proper secants, hence it is a linear space.

Assume now that the codimension of the source $Y_2$ in $X$ is at least two. Since we are assuming that the Picard number of $X$ is not $1$, by Proposition \ref{prop:bound}  there are closures of orbits $C_1$, linking $y_0$ and $Y_1$, and $C_2$, linking $Y_1$ and $Y_2$, which are not numerically proportional to the closure $C_g$ of the general orbit. Denote by $V^1$ and $V^2$ the normalization of the components  of $\rat(X)$ containing the elements parametrizing $C_1$ and $C_2$. Since $L \cdot C_i=1$ the families are unsplit.
Using Corollary \ref{cor:can} we compute
\[\begin{array}{r}
-K_X \cdot C_1  -K_X \cdot C_2 = \big(\mu_{-K_X}(Y_1)-\mu_{-K_X}(y_0)\big)-\big(\mu_{-K_X}(Y_2)-\mu_{-K_X}(Y_1)\big)=\\[3pt]=\mu_{-K_X}(Y_2)-\mu_{-K_X}(y_0)=-K_X \cdot C_g = 2 \dim X -\dim Y_2 \ge \dim X+2.
\end{array}
\]  
We can then apply \cite[Remark 5.5]{ACO} to get that that both $V^1$ and $V^2$ are covering families.
We conclude that $X$ is a product of two projective spaces applying \cite[Main Theorem]{O}. 
  \end{proof}

\section{Induced actions on families of rational curves}\label{sec:wfpc}

In this section we will work in the following situation:

\begin{setup}\label{set:basic}
$(X,L)$ is a smooth polarized pair, endowed with an equalized $\C^*$-action, and admitting a beautiful family of rational curves $\cM$, of $L$-degree $d$, with evaluation morphism $q: \cU \to \cM$ (see Section \ref{ssec:ratcurves}). 
\end{setup}

Our goal will be to describe the induced $\C^*$-actions on $\cM$ and $\cU$.

\subsection{Action on $\cM$}
Let $C$ be a curve such that  $[C] \in \cM$; the point $[C]$ is fixed by the induced action if and only if $C$ is a $\C^*$-invariant curve, that is, if either $C \subset Y$ with $Y \in \cY$, or $C$ is the closure of an orbit of the action. The latter curves will be called {\em bridges} of the family $\cM$ (with respect to the $\C^*$-action). If a bridge $C$ has sink and source in fixed components $Y, Y'$, we say that {\em the bridge $C$ links $Y$ and $Y'$}. Note that in this case, by Lemma \ref{lem:AMvsFM}, $\mu_L(Y) \not =\mu_L(Y')$, hence $Y \not= Y'$. 

\begin{definition}\label{def:limitcurves}
If $[C] \in \cM$ is not a fixed point of the induced action on $\cM$ we consider the orbit $\C^*[C]$; the curves $C_\mp$, parametrized by the sink $[C_-]$ and the source $[C_+]$ of $\C^*[C]$ will be called the {\em limit curves} of $C$ under the $\C^*$-action.
\end{definition}

\begin{construction}\label{const:Sc}
Given a non-fixed point $[C] \in \cM$ we construct a rational ruled surface $S_C$ as follows: 
we take the closure of the orbit of $[C]$, $\overline{\C^*[C]}$, which is a rational curve, we take its normalization $\nu: \PP^1 \to \overline{\C^*[C]}$ and we consider the fiber product:
\begin{equation}\label{eq:C}
\xymatrix{S_C \ar@/^10pt/[rr]^{\overline q}\ar[d]_p \ar[r]_{\overline \nu}& \cU \ar[d]_p \ar[r]_q& X\\
\PP^1 \ar[r]^\nu &\cM}
\end{equation}
By construction, the  surface $S_C$ is isomorphic to $\mathbb F_e:=\P(\cO_{\P^1}\oplus \cO_{\P^1}(e))$ for some $e\geq 0$, and is endowed with a $\C^*$-action with two invariant fibers $F_-$, $F_+$, which are mapped via $\overline q$ to the limit curves of $C$.\end{construction}

\begin{remark}\label{rem:1contr}
Note that $\overline q(S_C)$ has dimension two, so $\overline q$ can contract at most one curve in $S_C$, otherwise we would have a one-dimensional family of curves of $\cM$ passing by two points of $X$; this, by Bend-and-Break (see \cite[Proposition 3.2]{De}) would contradict the fact that $\cM$ is an unsplit family. If the curve $C$ contains a fixed point of the action, then $S_C$ contains a section which is contracted by $\overline q$.
\end{remark}

\begin{corollary}\label{rem:onefix} Let $C$ be a curve in $\cM$ such that $[C]$ is not a fixed point. Then there is at most one point of $C$ which is fixed. 
\end{corollary}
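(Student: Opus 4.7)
The plan is to use the ruled surface $S_C$ from Construction \ref{const:Sc} and to transfer the statement about fixed points of the $\C^*$-action on $X$ lying on $C$ into a statement about curves in $S_C$ that are contracted by $\overline q : S_C\to X$, so that Remark \ref{rem:1contr} provides the desired obstruction.

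Concretely, I would argue by contradiction: suppose $C$ contains two distinct points $x_1,x_2$ that are fixed by the $\C^*$-action on $X$. The curve $C$ is a fiber of $p:S_C\to \PP^1$ lying over a point that is \emph{not} a fixed point of the $\C^*$-action on $\PP^1$ (since by assumption $[C]$ is not fixed in $\cM$). Hence the $\C^*$-action on $S_C$ does not preserve $C$, and for each $i=1,2$ the orbit $\C^*\cdot x_i \subset S_C$ is one-dimensional; its closure $D_i\subset S_C$ is therefore an irreducible curve.

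Next I would use that $\overline q$ is $\C^*$-equivariant. Since $x_i$ is fixed in $X$, equivariance forces $\overline q(D_i)=\overline{\C^*\cdot \overline q(x_i)}=\{x_i\}$, so each $D_i$ is contracted by $\overline q$. Because $x_1\neq x_2$ and $\overline q(D_i)=\{x_i\}$, the curves $D_1$ and $D_2$ must be distinct. Thus $\overline q$ contracts at least two irreducible curves in $S_C$.

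This contradicts Remark \ref{rem:1contr}, which asserts that $\overline q$ can contract at most one curve of $S_C$ (otherwise the unsplitness of $\cM$, via Bend-and-Break, would be violated). The only non-routine verification in this plan is the fact that the orbits $\C^*\cdot x_i$ are genuinely one-dimensional inside $S_C$; this will be the key observation, and it rests on the fact that $C$ is a fiber over a non-fixed point of $\PP^1$, so any $\C^*$-stabilizer of a point of $C$ would have to act trivially on the base direction while moving inside $C$, which is incompatible with the base component being non-trivial.
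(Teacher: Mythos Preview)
Your argument is correct and follows the same route as the paper: the corollary is meant to be an immediate consequence of Remark~\ref{rem:1contr}, whose last sentence already records that a fixed point of $C$ produces a section of $S_C$ contracted by $\overline q$; you simply reprove that sentence by taking orbit closures. One small point of hygiene: $C$ is a curve in $X$, not literally a fiber of $p:S_C\to\PP^1$, and $x_i\in X$ is not literally a point of $S_C$; you should first pick the fiber $F$ of $p$ mapping to $C$ and lift each $x_i$ to $\tilde x_i\in F$, then run your argument with $\tilde x_i$ (the one-dimensionality of $\C^*\cdot\tilde x_i$ is immediate from equivariance of $p$ and the fact that $p(\tilde x_i)$ is not fixed in $\PP^1$).
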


We will now use Construction \ref{const:Sc} to prove that the equalization of the action on $X$ implies the equalization of the induced $\C^*$-action on $\cM$. 

\begin{proposition}\label{prop:equalM}
The induced $\C^*$-action  on $\cM$ is equalized. In particular, every closure of a $1$-dimensional orbit of the action on $\cM$ is a smooth rational curve.
\end{proposition}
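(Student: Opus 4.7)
My plan is to reduce equalization on $\cM$ to equalization on $X$ together with the earlier observation (Corollary \ref{rem:onefix}) that a curve $C$ with $[C]\in\cM$ non-fixed contains at most one $\C^*$-fixed point of $X$. In other words, I want to transfer isotropy information from $\cM$ back down to $X$ along the concrete curve $C$.

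Equalization at every non-extremal fixed component of $\cM$ amounts to saying that the stabilizer $\mathrm{Stab}([C])\subset\C^*$ is trivial for every non-fixed $[C]\in\cM$. I would argue by contradiction: suppose $1\neq t\in\mathrm{Stab}([C])$, so $t$ is a torsion element of $\C^*$ preserving $C$ set-theoretically and thus inducing an automorphism of $C\simeq\P^1$.

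The two key steps are then:
\begin{enumerate}
\item $t$ must act \emph{non-trivially} on $C$. Indeed, since $[C]$ is not fixed in $\cM$, the curve $C$ is not $\C^*$-invariant and in particular is not contained in $X^{\C^*}$, so I can pick $x\in C\setminus X^{\C^*}$. By equalization of the action on $X$, such an $x$ lies in a BB-cell and hence has trivial isotropy; if $t$ acted trivially on $C$, it would fix $x$, forcing $t=1$.
\item A non-trivial torsion automorphism of $\P^1$ has exactly two fixed points $c_1,c_2\in C$. At each $c_i$ we have $t\in\mathrm{Stab}(c_i)$, so the same equalization argument on $X$ forces $c_i\in X^{\C^*}$. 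Thus $C$ would contain two distinct $\C^*$-fixed points of $X$, contradicting Corollary \ref{rem:onefix}.
\end{enumerate}

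The ``in particular'' statement is then immediate: for any equalized $\C^*$-action the closure of any $1$-dimensional orbit is a smooth rational curve (as recalled in Subsection \ref{sssec:equal}), applied here to the orbits of the induced action on $\cM$. I do not anticipate any real obstacle: the argument is essentially a bookkeeping step, the only subtlety being to confirm that $[C]$ non-fixed in $\cM$ is equivalent to $C$ being non-$\C^*$-invariant in $X$, which is clear from the definition of the induced action on the family.
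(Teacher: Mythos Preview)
Your approach is correct in spirit and more direct than the paper's, which instead produces a $\C^*$-invariant section $C'$ of the ruled surface $S_C$ (by picking eigenvectors in $\HH^0(\P^1,\cE)$), observes that $\overline q(C')$ is the closure of a $1$-dimensional orbit in $X$, and then transports faithfulness from that orbit back to the base $\P^1\simeq\overline{\C^*[C]}^{\mathrm{norm}}$.

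There is, however, a small but genuine gap in your step (2). You write ``$C\simeq\P^1$'', but nothing in Setup \ref{set:basic} forces the curves of $\cM$ to be smooth in $X$; what you actually have is the fiber $p^{-1}([C])\simeq\P^1$ in $\cU$, and $q$ restricted to it is only the \emph{normalization} of $C$. The two $t$-fixed points $\tilde c_1\neq\tilde c_2$ live on $p^{-1}([C])$, and their images $c_1,c_2\in C\subset X$ could a priori coincide (e.g.\ if $C$ has a node at a $\C^*$-fixed point). In that case Corollary \ref{rem:onefix}, which bounds the number of $\C^*$-fixed \emph{points of $C$}, yields no contradiction.

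The fix is immediate once you phrase things upstairs. Since $q(\tilde c_i)\in X^{\C^*}$ and $q$ is equivariant, the $\C^*$-orbit closure of $\tilde c_i$ inside $S_C$ is a section of $S_C\to\P^1$ that is contracted by $\overline q$; moreover these two sections are distinct because they meet the fiber $p^{-1}([C])$ in the distinct points $\tilde c_1,\tilde c_2$. This contradicts Remark \ref{rem:1contr} directly, without ever comparing $c_1$ and $c_2$. With that adjustment your argument is complete and arguably cleaner than the paper's.
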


\begin{proof}
Let $[C] \in \cM$ be a non-fixed point, and consider the normalization $\nu: \PP^1 \to \overline{\C^*[C]}$; we will show that the induced action on $\PP^1$ is faithful.
Let $S_C$ be as in Construction \ref{const:Sc}; the $\C^*$-action on $S_C$ is given by a linearization of the action on the vector  bundle $\cE:=\cO_{\P^1}\oplus \cO_{\P^1}(e)$. We will show that there exists a section $C'$ of $S_C \to \PP^1$ that is mapped to the closure of a $1$-dimensional orbit in $X$.  Since the $\C^*$-action on $X$ is faithful, also the action on $C'$ will be faithful, and therefore the action on $\P^1$ will be faithful, too.
We distinguish the cases $e=0$ and $e >0$.

 If $e=0$, we consider the $\C^*$-action on the space of global sections $\HH^0(\P^1,\cE)$, that has two linearly independent eigenvectors, providing two $\C^*$-invariant surjective morphisms $\cE \to \cO_{\P^1}$, that correspond to two $\C^*$-invariant sections of $S_C\to \P^1$, denoted by $C_0,C_1$. We conclude by noting that, by Remark \ref{rem:1contr}, $\overline q$ cannot contract both $C_0$ and $C_1$.
 
If $e>0$, then the induced action on $\HH^0(\P^1,\cE)$ preserves the subspace $\HH^0(\cO_{\P^1}(e))$, which has a $\C^*$-invariant $1$-dimensional complement. Then again we obtain two surjective morphisms $\cE \to \cO_{\P^1}$, $\cE \to \cO_{\P^1}(e)$, corresponding to two $\C^*$-invariant sections  $C_0,C_1$ of $S\to \P^1$, as in the previous case.  
\end{proof}

We finish the section by describing the possibilities for the surface $S_C$ and the limit curves $C_-,C_+$.

\begin{proposition}\label{prop:degC1}
Let $[C] \in \cM$ be a non-fixed point, and let  $S_C$ be as  in Construction \ref{const:Sc}. Then we have the following possibilities for $S_C \simeq \mathbb F_e$ and the limit curves $C_-,C_+$:
\begin{itemize}
\item[0f)] $e=0$, and both $C_-$,$C_+$ are fixed;
\item[0b)] $e=0$, and both $C_-$,$C_+$ are bridges;
\item[1)] $e=1$, one among $C_-$ and $C_+$ is fixed, and the other is a bridge;
\item[2)] $e= 2$, and both $C_-$, $C_+$ are bridges.
\end{itemize}
Moreover, if $C$ contains one fixed point of the action, then only cases 1) and 2) are possible.
\end{proposition}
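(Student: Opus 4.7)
My plan is to analyze the $\C^*$-action on $S_C\simeq\mathbb{F}_e$ via a compatible $\C^*$-linearization of $\cE=\cO_{\P^1}\oplus\cO_{\P^1}(e)$, which exists since both summands are $\C^*$-linearizable line bundles on $\P^1$. By Proposition~\ref{prop:equalM}, the induced $\C^*$-action on the base $\P^1$ (the normalization of $\overline{\C^*[C]}$) is the standard faithful action, so each fiber $\cE|_\pm$ over $\nu^{-1}([C_\pm])$ splits into two eigenlines with weight difference $\delta_\pm\in\Z$ governing the induced action on $F_\pm=\P(\cE|_\pm)\simeq\P^1$. The standard relation between the degree of a line bundle on $\P^1$ and the difference of the weights of its $\C^*$-linearization at the two fixed points then yields $|\delta_+-\delta_-|=e$.

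The crucial step is to show that $\delta_\pm\in\{0,\pm 1\}$, with $\delta_\pm=0$ exactly when $C_\pm$ is a fixed curve and $|\delta_\pm|=1$ exactly when $C_\pm$ is a bridge. The restriction $\overline q|_{F_\pm}\colon F_\pm\to C_\pm$ is $\C^*$-equivariant and birational: from $[F_\pm]\cdot\overline q^*L=d$ (since $F_\pm$ is a fiber of $p$) and $\overline q_*[F_\pm]=\deg(\overline q|_{F_\pm})\cdot[C_\pm]$ with $[C_\pm]\cdot L=d$, the degree must be $1$, so it is an isomorphism between smooth rational curves. If $C_\pm\subset Y\in\cY$ is a fixed curve then the action on $C_\pm$ is trivial and, by equivariance, so is the action on $F_\pm$, giving $\delta_\pm=0$. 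If instead $C_\pm$ is a bridge, equalization on $X$ forces $C_\pm$ to be a smooth rational curve on which $\C^*$ acts with tangent weights $\pm 1$ at the two fixed points, and the equivariant isomorphism $F_\pm\simeq C_\pm$ transports these to $F_\pm$, giving $|\delta_\pm|=1$. Combining $\delta_\pm\in\{0,\pm 1\}$ with $|\delta_+-\delta_-|=e$ produces exactly the four cases: $\delta_-=\delta_+=0$ gives 0f and $e=0$; $\delta_-=\delta_+=\pm 1$ gives 0b and $e=0$; $\{\delta_-,\delta_+\}=\{0,\pm 1\}$ gives case 1 and $e=1$; $\delta_-=-\delta_+=\pm 1$ gives case 2 and $e=2$.

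For the final clause, assume $C$ contains a fixed point $y$ and pick a lift $\tilde y\in p^{-1}(\nu^{-1}([C]))$. Equalization on $\cM$ makes the isotropy at $\nu^{-1}([C])$ trivial, hence so is the isotropy at $\tilde y$, and the orbit closure $\tilde C_y:=\overline{\C^*\tilde y}$ is a smooth rational $\C^*$-invariant curve that projects isomorphically onto $\P^1$ via $p$, i.e., a section of $p$. Because $y$ is $\C^*$-fixed, $\overline q$ contracts $\tilde C_y$ to $\{y\}$, forcing $\tilde C_y^2\le 0$. The two $\C^*$-invariant sections of $S_C=\P(\cO\oplus\cO(e))$ arise from the two invariant sub-line-bundles $\cO\subset\cE$ and $\cO(e)\subset\cE$, with self-intersections $+e$ and $-e$ respectively. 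For $e\ge 1$, only the negative section can be contracted, a single rigid curve compatible with Remark~\ref{rem:1contr}. For $e=0$ both invariant sections have self-intersection $0$ and belong to a base-point-free pencil of disjoint sections; $\overline q$ would then contract the entire pencil and factor through the second projection $\P^1\times\P^1\to\P^1$, giving $\overline q(S_C)$ dimension one and contradicting Remark~\ref{rem:1contr}. Hence $e\ge 1$, and only cases 1 and 2 remain.

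The principal technical point is the degenerate case $e=0$ in the final step, where the invariant sections are not distinguished by self-intersection and one must argue via the pencil structure on $\mathbb{F}_0\simeq\P^1\times\P^1$ rather than via a rigid negative section.
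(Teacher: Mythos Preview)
Your proof is correct and follows essentially the same strategy as the paper's: analyzing the weights of the induced $\C^*$-action on the two invariant fibers $F_\pm$ of $S_C\simeq\mathbb F_e$, constraining them via equalization, and reading off $e$ from the degree--weight relation on $\P^1$. The paper normalizes absolute weights through an external lemma and tabulates six cases, whereas you work with the weight differences $\delta_\pm$ directly and use the equivariant identification $F_\pm\simeq C_\pm$; this is a cosmetic difference. For the final clause your argument is more explicit than the paper's terse ``since $C$ is not fixed, $e>0$'': you spell out that a contracted section of $\mathbb F_0$ lies in a base-point-free pencil, forcing $\overline q$ to contract every member and collapsing the image to dimension $\leq 1$, contrary to Remark~\ref{rem:1contr}.
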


\begin{proof}
Set $\cE:= \cO_{\P^1}(1)\oplus \cO_{\P^1}(e+1)$ and let $a_-,b_-$ (resp. $a_+,b_+$) be the weights of the action at $\cE_{[C_-]}$ (resp. at $\cE_{[C_+]}$).
By \cite[Lemma 2.16]{WORS1}, up to renaming and normalizing them, we have $a_-=0, a_+=1$ and $b_+=b_-+e+1$. Moreover, since the action on $F_-$ and $F_+$ is equalized, we have that $|b_-| \le 1$ and $0 \le b_+ \le 2$. We also note that the fiber $F_-$ (resp. the fiber $F_+$) is a fixed point component if and only if $b_-=0$ (resp. $b_+=1$).
Therefore we have the following possibilities, which lead to the cases listed in the statement:
 \begin{center}
\begin{tabular}{|c|c|c|c|c|}
\hline
 $b_-$ & $b_+$ & $e$ &$C_-$&$C_+$\\
\hline
$-1$ & $0$ & $0$ & orbit & orbit \\
$-1$ & $1$ & $1$ & orbit & fixed \\
$-1$ & $2$ & $2$ & orbit & orbit \\
\hline
$0$ & $1$ & $0$& fixed & fixed \\
$0$ & $2$ & $1$ & fixed &orbit \\
\hline
$1$ & $2$ & $0$ & orbit & orbit \\
\hline
\end{tabular}
\end{center}

If $C$ contains a fixed point $y$, then every fiber of $S_C$ over $\P^1$ contains a point mapping to $y$. In other words, $S_C$ contains a section over $\P^1$ contracted via $\ol{q}$ to $y$. Since  $C$ is not fixed, 
 we may conclude that $e>0$.  
\end{proof}

\begin{figure}[h!]
\begin{center}
\begin{tabular}{ccc}
\includegraphics[height=1.7cm]{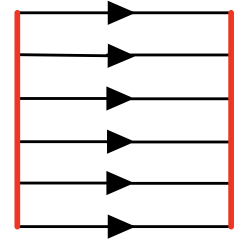} &\includegraphics[height=1.75cm]{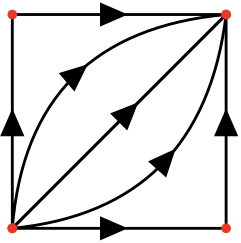} &\includegraphics[height=1.7cm]{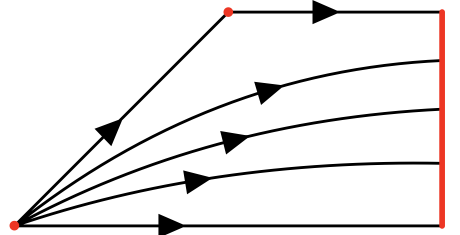}\\
\hline
\quad $e=0$, $C_\pm$ fixed \quad &$e=0$, $C_\pm$ bridges &$e=1, C_+$ fixed \\
\hline
& & \\
\includegraphics[height=1.7cm]{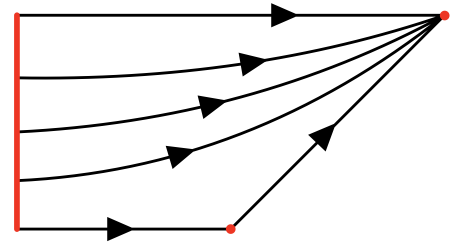} &\includegraphics[height=1.7cm]{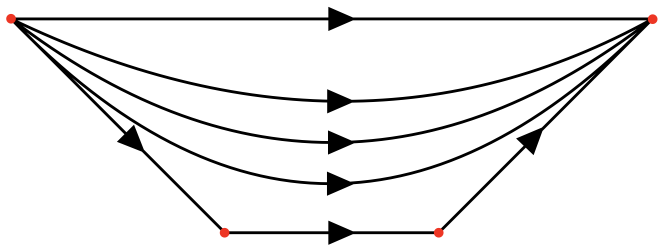} &\\
\hline
 $e=1, C_-$ fixed  &$e=2$, $C_\pm$ bridges & \\
\hline
\end{tabular}
\end{center}
\caption{The possible cases in Proposition \ref{prop:degC1}.}
\end{figure}

\begin{corollary}\label{cor:degC1}
In the situation of Proposition \ref{prop:degC1}, assume moreover that $C$ contains a fixed point $y \in Y$, with $Y\in \cY$. Then:
\begin{itemize}
\item in case 1), either $C_+$ links $Y$ to a component $Y'$ of $L$-weight $\mu_L(Y)+d$, or $C_-$ links $Y$ to a component $Y'$ of $L$-weight $\mu_L(Y)-d$;
\item in case 2), $C_+$, $C_-$ link $Y$ to components  of $L$-weights $\mu_L(Y)+d$, $\mu_L(Y)-d$, respectively.
\end{itemize}
\end{corollary}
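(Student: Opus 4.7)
The plan is to identify the section of $S_C\to\P^1$ contracted by $\ol q$, recognize it as the negative section of $\mathbb F_e$, and then read off where $y$ sits on the limit curves $C_\pm$ from an equivariant computation on its normal bundle.

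First, since $y$ is $\C^*$-fixed and lies on $C$, every curve in the orbit $\C^*[C]$ passes through $y$, and so do both limit curves $C_\pm$. Because $\ol q$ is an isomorphism on each fiber of $p|_{S_C}:S_C\to\P^1$, the set-theoretic preimage of $y$ in $S_C$ is a $\C^*$-invariant section $\Sigma$ contracted by $\ol q$. Remark \ref{rem:1contr} ensures that $\Sigma$ is the unique curve contracted by $\ol q$, so $\Sigma^2<0$ (a contracted curve on a smooth surface whose image is $2$-dimensional has negative self-intersection, by Hodge index applied to $\Sigma$ and the pullback of an ample class from $\ol q(S_C)$). Since $S_C\simeq \mathbb F_e$ with $e\geq 1$ in both cases 1) and 2) of Proposition \ref{prop:degC1}, $\Sigma$ must be the negative section of $\mathbb F_e$, and $\Sigma^2=-e$.

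The central step is to apply Corollary \ref{cor:can} to the $\C^*$-equivariant line bundle $L=N_{\Sigma/S_C}$ on $\Sigma\simeq \P^1$. Setting $p_\pm:=\Sigma\cap F_\pm$, the induced $\C^*$-action on $\Sigma$ is equalized, with sink $p_-$ and source $p_+$; transversality of $\Sigma$ and $F_\pm$ at $p_\pm$ inside $S_C$ identifies $N_{\Sigma/S_C}|_{p_\pm}$ with $T_{F_\pm,p_\pm}$ as a $\C^*$-representation. Corollary \ref{cor:can} then yields
\[
\mathrm{wt}\bigl(T_{F_+,p_+}\bigr)-\mathrm{wt}\bigl(T_{F_-,p_-}\bigr)\;=\;\deg\bigl(N_{\Sigma/S_C}\bigr)\;=\;-e.
\]

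I would then conclude by a case analysis. When $F_\pm$ is pointwise fixed (i.e.\ $C_\pm$ is a fixed limit curve), the corresponding tangent weight is $0$; otherwise equalization of the action on $F_\pm$ forces $\mathrm{wt}(T_{F_\pm,p_\pm})\in\{\pm 1\}$, taking the value $+1$ at the source of $F_\pm$ and $-1$ at its sink. In case 1), exactly one of $F_+,F_-$ is pointwise fixed: if $F_+$ is fixed, the equation above gives $\mathrm{wt}(T_{F_-,p_-})=1$, so $p_-$ is the source of $F_-$; via the $\C^*$-equivariant isomorphism $\ol q|_{F_-}\colon F_-\to C_-$, the point $y$ is then the source of the bridge $C_-$, and Lemma \ref{lem:AMvsFM} places the sink of $C_-$ in a fixed component of $L$-weight $\mu_L(Y)-d$. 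The opposite sub-case, with $F_-$ fixed, is symmetric and produces the other alternative. In case 2), both $F_\pm$ are bridges, so the difference $-2$ with both weights in $\{\pm 1\}$ forces $\mathrm{wt}(T_{F_-,p_-})=+1$ and $\mathrm{wt}(T_{F_+,p_+})=-1$; hence $y$ is simultaneously the source of $C_-$ and the sink of $C_+$, and the two bridges link $Y$ with components of $L$-weights $\mu_L(Y)-d$ and $\mu_L(Y)+d$ as claimed.

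The hardest bookkeeping will be in the equivariant identification of $N_{\Sigma/S_C}|_{p_\pm}$ with $T_{F_\pm,p_\pm}$ and the careful use of Corollary \ref{cor:can}; once the weight formula is in hand, the combinatorial case analysis above settles both cases at once.
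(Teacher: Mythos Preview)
Your argument is correct, but it is considerably more elaborate than the paper's. The paper dispenses with the section $\Sigma$, the Hodge index step, and the normal-bundle weight computation entirely: it simply takes a general point $x\in C$, which is not fixed, and observes that its sink and source $x_\pm$ lie on $C_\mp$ and $C_+$ respectively (this is implicit from the surface $S_C$, since the orbit of a lift of $x$ must limit into $F_\pm$). In case 1), if say $C_-$ is fixed, then $C_-$ passes through $y$ and hence lies in $Y$, so $\mu_L(x_-)=\mu_L(Y)$; Lemma~\ref{lem:AMvsFM} applied to the orbit $\C^*x$ in $X$ then gives $\mu_L(x_+)=\mu_L(Y)+d$, so $C_+$ is the required bridge. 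Case 2) is handled by the same device: both $x_\pm$ lie outside $Y$, so $\mu_L(x_\pm)=\mu_L(Y)\pm d$ by Lemma~\ref{lem:AMvsFM}, and the inequality $\mu_L(x_-)<\mu_L(x_+)$ fixes the signs. Your route via the negative section and tangent weights on $F_\pm$ recovers exactly this information, but through an equivariant localization on $\Sigma$ that the paper avoids; the paper's argument stays entirely in $X$ and needs only one application of Lemma~\ref{lem:AMvsFM} per case.
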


\begin{proof}
A general point $x \in C$ is not fixed, hence the sink and source $x_\pm$ of the orbit of $x$ have weights $\mu_L(x_-) < \mu_L(x_+)$. In case 1) if, for instance, $C_-$ is fixed, then $\mu_L(x_-)=\mu_L(y)$ and, by Lemma \ref{lem:AMvsFM}, $\mu_L(x_+)=\mu_L(y)+d$.
In case 2) both $C_-$ and $C_+$ are bridges, then $x_\pm \not \in Y$, and $\mu_L(x_\pm)= \mu_L(Y) \pm d$ by Lemma  \ref{lem:AMvsFM}. Since  $\mu_L(x_-) < \mu_L(x_+)$ we must have $\mu_L(x_-) = \mu_L(Y)-d$,  $\mu_L(x_+)=\mu_L(Y)+d$.
   \end{proof} 

\subsection{Action on $\cU$} \label{ssec:actionU}

We start by showing that the equalization of the $\C^*$-action on $X$ is inherited also by the induced action on $\cU$.

\begin{proposition}\label{prop:equalU}
The induced $\C^*$-action  on $\cU$ is equalized.
\end{proposition}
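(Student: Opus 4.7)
The plan is to reduce equalization of the induced action on $\cU$ to the already known equalizations of the actions on $X$ (by Setup~\ref{set:basic}) and on $\cM$ (by Proposition~\ref{prop:equalM}), exploiting the concrete description of $\cU$ as the incidence variety
\[
\cU = \{([C],x)\in \cM\times X\,:\, x\in C\}.
\]
Under this description, the two projections $p:\cU\to\cM$ and $q:\cU\to X$ are $\C^*$-equivariant, and the combined morphism $(q,p):\cU\to X\times \cM$ is a closed immersion; in particular, it is injective.

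Recall that an equalized action is characterised by the property that every point with a $1$-dimensional orbit has trivial isotropy. So let $u\in\cU$ have a $1$-dimensional $\C^*$-orbit, and let $I\subseteq\C^*$ be its (necessarily finite) isotropy subgroup. I first argue that at least one of $q(u)$, $p(u)$ must have a $1$-dimensional orbit: if both were fixed by $\C^*$, then every point of $\C^*\cdot u$ would map under $(q,p)$ to the single point $(q(u),p(u))$, and the injectivity of $(q,p)$ would force $\C^*\cdot u=\{u\}$, contradicting the assumption that the orbit is $1$-dimensional.

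Suppose then that $q(u)$ has a $1$-dimensional orbit in $X$. By the equivariance of $q$, the isotropy $I$ is contained in the isotropy subgroup of $q(u)$; since the action on $X$ is equalized, the latter is trivial, and therefore so is $I$. The case in which $p(u)$ has a $1$-dimensional orbit in $\cM$ is identical, applying Proposition~\ref{prop:equalM} in place of the equalization hypothesis on $X$. In either scenario we conclude that every $1$-dimensional orbit of $\C^*$ on $\cU$ has trivial isotropy, which is exactly what equalization requires.

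I do not foresee any real obstacle here: once equalization on $\cM$ is in hand, the incidence description of $\cU$ and the equivariance of the two natural projections reduce the statement to an essentially formal diagram chase. The only point that requires a moment's care is the observation that a non-trivial orbit in $\cU$ cannot be simultaneously collapsed by $p$ and by $q$, which is precisely where the injectivity of $(q,p)$ is used.
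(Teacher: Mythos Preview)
Your approach is essentially the paper's own: use equivariance of $p$ and $q$ to get $G_u\subset G_{p(u)}\cap G_{q(u)}$, observe that at least one of $p(u),q(u)$ is not fixed, and conclude via the equalization on $X$ and on $\cM$.

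One small point deserves care. You describe $\cU$ as the incidence variety inside $\cM\times X$ and assert that $(q,p)$ is a closed immersion. This is not how $\cU$ is defined: it is the universal $\P^1$-bundle over $\cM$, and the evaluation on the fiber over $[C]$ is the normalization $\P^1\to C$, which need not be injective if $C$ is singular. So $(q,p)$ need not be injective in general, and your sentence invoking injectivity is, as written, unjustified. The paper phrases the key step differently: the closure of $\C^*u$ cannot be contracted by both $p$ and $q$, because if $p$ contracts it then it lies in a fiber $\P^1$, hence equals that whole $\P^1$, and then $q$ maps it onto the curve $C$. Your argument is easily repaired along the same lines: the fibers of $(q,p)$ are finite (each $p$-fiber maps finitely under $q$), and a connected $1$-dimensional orbit cannot sit inside a finite set.
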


\begin{proof}
Let $u \in \cU$ be a non-fixed point, and let $G_u,G_{p(u)},G_{q(u)} \subset \C^*$ be the isotropy subgroups of $u\in \cU$, $p(u)\in \cM$, $q(u)\in X$. Clearly we have that $G_u\subset G_{p(u)}\cap G_{q(u)}$. 
Since the closure of $\C^*u$ cannot be contracted by both $p$ and $q$, we have that either $G_{p(u)}$ or $G_{q(u)}$ are finite. Moreover, since the actions of $\C^*$ on $X$ and $\cM$ are equalized, we have that either $G_{p(u)}$ or $G_{q(u)}$ are trivial, and we conclude that $G_u$ is trivial as well. 
 \end{proof}

Since $q$ is equivariant, a fixed point component of the action on $\cU$ must be mapped by $q$ to a fixed point component $Y \in \cY$, hence,
to determine the fixed point components of the action on $\cU$, we will study the $\C^*$-action on the subvarieties 
\[\cU_Y:= q^{-1}(Y), \ \text{with}\ Y \in \cY.\] 
Since $p$ is equivariant, the image of a fixed point in $\cU_Y$ must be a point $[C]$ such that either $C \subset Y$, or $C$ is a bridge. In other words, the fixed point locus of $\cU_Y$ can be written as a disjoint union of (a priori reducible) closed subsets $$\cU_Y^{\C^*}=\cU_Y^-\sqcup \cU_Y^0\sqcup \cU_Y^+$$
defined as:
\begin{itemize}[topsep=10pt,itemsep=2pt]
\item  $\cU_Y^-$: set of points $([C],y)$ such that $C$ is a bridge with source $y \in Y$;
\item $\cU_Y^0$: set of points $([C],y)$ such that $y \in C \subset Y$;
\item  $\cU_Y^+$: set of points $([C],y)$ such that $C$ is a bridge with sink $y \in Y$.
\end{itemize}
Note that $\cU_Y=\bigsqcup_{y\in Y}\cU_y$, and that every $\cU_y$ is $\C^*$-invariant. In particular, every orbit in $\cU_Y$ is contained in $\cU_y$ for some $y \in Y$.
The following Lemma, which is a straightforward consequence of Corollary \ref{cor:degC1}, describes the $\C^*$-orbits on $\cU_Y$:

\begin{lemma}\label{lem:Uorbits}
The orbits of the action on $\cU_Y$ are of three possible types:
\begin{itemize}[topsep=5pt,itemsep=2pt]
\item orbits with sink in $\cU_Y^-$ and source in $\cU_Y^0$;
\item orbits with sink in $\cU_Y^0$ and source in $\cU_Y^+$;
\item orbits with sink in $\cU_Y^-$ and source in $\cU_Y^+$.
\end{itemize}
\end{lemma}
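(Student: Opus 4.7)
The plan is to take a non-fixed point $u=([C],y)\in\cU_Y$, look at its orbit $\C^*u$ with sink $u_-$ and source $u_+$, and identify $u_\pm$ by combining the $\C^*$-equivariance of $p$ and $q$ with Proposition \ref{prop:degC1} and Corollary \ref{cor:degC1}. Because $y\in Y$ is fixed in $X$ and $q$ is equivariant, the entire orbit of $u$ sits inside $\cU_y$, so $q(u_\pm)=y$ and the limits $u_\pm$ automatically lie in $\cU_Y^{\C^*}=\cU_Y^-\sqcup\cU_Y^0\sqcup\cU_Y^+$.

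The first step is to show that $[C]$ must be non-fixed in $\cM$. If $[C]$ were fixed, the orbit of $u$ would lie in the fiber $p^{-1}([C])\simeq\P^1$, whose image under $q$ is the curve $C$ (via a generically injective morphism). Equivariance of $q$ together with $q(u)=y$ fixed forces the whole orbit into $p^{-1}([C])\cap q^{-1}(y)$, a finite set, contradicting that $u$ is non-fixed. Hence $[C]$ is non-fixed, and $\C^*[C]\subset\cM$ has limit curves $C_\pm$ with $p(u_\pm)=[C_\pm]$; the condition $q(u_\pm)=y$ then says $y\in C_-\cap C_+$.

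Since $C$ contains the fixed point $y$, Proposition \ref{prop:degC1} rules out its cases 0f) and 0b), leaving only 1) and 2). In Case 2, both $C_\pm$ are bridges and, by Corollary \ref{cor:degC1}, $C_+$ links $Y$ to a component of weight $\mu_L(Y)+d$ while $C_-$ links it to one of weight $\mu_L(Y)-d$; thus $y$ is the source of $C_-$ and the sink of $C_+$, giving $u_-\in\cU_Y^-$ and $u_+\in\cU_Y^+$. In Case 1, exactly one of $C_\pm$ is fixed; being fixed and containing $y$, it is contained in a fixed component which must be $Y$. If $C_-$ is fixed ($C_-\subset Y$) then $u_-\in\cU_Y^0$, and $C_+$ is a bridge linking $Y$ to a component of weight $\mu_L(Y)+d$, so $y$ is its sink and $u_+\in\cU_Y^+$; symmetrically, if $C_+$ is fixed we get $u_-\in\cU_Y^-$, $u_+\in\cU_Y^0$.

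These three scenarios exhaust the possibilities and coincide exactly with the three types of orbits listed in the statement. The only delicate point is the exclusion of fixed $[C]$ at the start; once Proposition \ref{prop:degC1} and Corollary \ref{cor:degC1} are available, the remainder is pure bookkeeping on which limit curve is a bridge and which is contained in $Y$.
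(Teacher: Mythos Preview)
Your proof is correct and follows the same approach the paper intends: the paper states the lemma as ``a straightforward consequence of Corollary~\ref{cor:degC1}'', and you have simply written out that straightforward consequence in full, including the (easy but necessary) preliminary observation that $[C]$ cannot be fixed if $u$ is not. There is nothing essentially different between your argument and the paper's.
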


\begin{remark}
Some of the fixed point sets of $\cU_Y$ may be empty, even if the Picard number of $X$ is one; for instance $\cU_Y^- = \emptyset$ if $Y$ is the sink of the action on $X$, and $\cU_Y^+ = \emptyset$ if $Y$ is the source of the action on $X$. 
Also $\cU_Y^0$ may be empty; for instance, we may consider the Lagrangian Grassmannian $\DC_3(3)$ endowed with  the equalized action of  bandwidth three described in \cite[Section 8]{WORS1}. In this example $\cM$ is the family of lines in $\DC_3(3)$, and the inner fixed point components are Veronese surfaces which do not contain lines. 
\end{remark}
 
We will now show that the fixed point components described above are irreducible, with the possible exception of $\cU_Y^0$, which can be reducible only if $\cU_Y^\pm \not = \emptyset.$ 

\begin{lemma}\label{lem:extremal}
If \,$\cU_Y^i$ (for $i=-, 0, +$)  contains an extremal fixed point component, then it is irreducible,  and meets $\cU_y$ for every $y$.
\end{lemma}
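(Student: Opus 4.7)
We handle the case in which the extremal component is the sink $\cU^-$ of the $\C^*$-action on $\cU$; the source case follows by reversing the action.

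\emph{Meeting every $\cU_y$.} I would first show $q(\cU^-)=Y$. By $\C^*$-equivariance of $q$, the image $q(\cU^-)$ is a closed, irreducible, $\C^*$-invariant subset of $X$, and is thus contained in a single irreducible fixed component of $X^{\C^*}$; the inclusion $\cU^-\subset\cU_Y$ places this component inside $Y$. Conversely, for a general $u\in\cU$ we have $u_-\in\cU^-$ and $q(u_-)=q(u)_-$, and the dominance of $q$ (from the dominating hypothesis on $\cM$) forces $q(u)_-$ to cover a dense subset of the sink of the action on $X$. Combining these, $Y$ coincides with the sink of $X$ and $q(\cU^-)=Y$; in particular $\cU^-\cap\cU_y\neq\emptyset$ for every $y\in Y$, so $\cU_Y^i$ meets every $\cU_y$.

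\emph{Irreducibility of $\cU_Y^i$.} Since $q$ is smooth and $Y$ is smooth, $\cU_Y$ is smooth. At any $v\in\cU_Y^{\C^*}$, a tangent-space computation (reductivity of $\C^*$, together with $T_{q(v)}Y=(T_{q(v)}X)^{\C^*}$) yields $T_v\cU_Y^{\C^*}=T_v\cU^{\C^*}$, so $\cU_Y^{\C^*}$ is open and closed in $\cU^{\C^*}$; hence $\cU_Y^i$ is a disjoint union of irreducible fixed components of $\cU$, one of which is $\cU^-$. Using the connectedness of the fibers of $q$ (Remark~\ref{rem:MU}) together with the surjectivity $q(\cU^-)=Y$, the subvariety $\cU_Y$ is connected; the cell $X^-(\cU^-)\cap\cU_Y$ is a non-empty open (hence dense) subset of $\cU_Y$, and so $\cU^-$ is the unique sink of the induced $\C^*$-action on $\cU_Y$.

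Suppose, for a contradiction, that $V\subset\cU_Y^i$ is another irreducible fixed component of $\cU$ with $V\neq\cU^-$. By Lemma~\ref{lem:Uorbits}, the orbits in $\cU_Y$ have sinks in $\cU_Y^-\cup\cU_Y^0$ and sources in $\cU_Y^0\cup\cU_Y^+$, so $i\in\{-,0\}$. For $i=-$: the same lemma forbids $V$ of type $-$ from appearing as the source of any orbit in $\cU_Y$, so the negative normal weights of $V$ in $\cU_Y$ vanish and $V$ is sink-like; but the sink of the connected projective $\cU_Y$ is unique, so $V=\cU^-$. For $i=0$: the identifications $\cU_Y^0=p^{-1}(\cM_Y)$ with $\cM_Y:=\{[C]\in\cM\colon C\subset Y\}$ and $\cU^-=p^{-1}(\cM^-)$, where $\cM^-$ is the sink of the equalized action on $\cM$ (Proposition~\ref{prop:equalM}), reduce the statement to $\cM_Y=\cM^-$; this is obtained by running an argument analogous to the first step, using the surjectivity and density already established and the fact that every curve in $Y$ is pointwise $\C^*$-fixed, so $\cM_Y\subset\cM^{\C^*}$.

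The main obstacle I anticipate is the case $i=0$: the reduction to the statement $\cM_Y=\cM^-$ on $\cM$ requires carefully tracking the sink/attraction structure through the $\P^1$-bundle $p$, and one must confirm that no inner fixed component of $\cM$ parametrises curves contained in $Y$, a density statement parallel to the one used for $\cU$ but applied to the equalised action on $\cM$.
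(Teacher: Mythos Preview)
You have misread the statement. The phrase ``extremal fixed point component'' refers to the action restricted to $\cU_Y$, not to the global action on $\cU$; this is confirmed both by the paper's own proof (which begins ``Assume \dots that $\cU_Y^i$ contains the sink of the action on $\cU_Y$'') and by the way the lemma is used downstream (Corollary~\ref{cor:unif} applies it to inner components $Y$, where $\cU_Y$ certainly does not contain the global sink of $\cU$). Your argument hinges on identifying $\cU^-$ with the global sink of $\cU$ and then forcing $Y$ to be the sink of $X$; under the correct reading this conclusion is simply false, so your ``meeting every $\cU_y$'' step and everything built on it collapses for inner $Y$. The incomplete treatment of $i=0$, which you flag yourself, is a symptom of the same misreading: once you are working only inside $\cU_Y$ there is no need to separate cases by $i$ at all.

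The paper's argument is both shorter and more robust. It uses only two ingredients: (a) every point of the projective variety $\cU_Y$ is connected to the sink of $\cU_Y$ by a chain of orbit closures $C_1\cup\dots\cup C_k$ with the source of $C_j$ equal to the sink of $C_{j+1}$; and (b) Lemma~\ref{lem:Uorbits}, which says that every orbit in $\cU_Y$ strictly increases the ``type'' in the order $-<0<+$. Together these force any such chain starting in the sink (of type $i$) to leave type $i$ at the first step and never return, so a fixed point of type $i$ reached by the chain must already lie in the sink component. This gives irreducibility of $\cU_Y^i$ with no case analysis. The second assertion then drops out for free: since each $\cU_y$ is $\C^*$-invariant, any chain starting at a point of $\cU_y$ stays inside $\cU_y$, so the sink component meets every fibre.
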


\begin{proof}
Assume, without loss of generality, that $\cU_Y^i$ contains the sink of the action on $\cU_Y$. Then, given any point $x \in \cU_Y$ there exists a connected chain of closures of orbits $C_1 \cup \dots \cup C_k$ in $\cU_Y$ such that $C_1$ meets the sink of $\cU_Y$, the source of $C_j$ is the sink of $C_{j+1}$ for every $j$,  and $x \in C_k$.
By Lemma \ref{lem:Uorbits} there are no such chains  joining two different components of $\cU_Y^i$, hence $\cU_Y^i$ is irreducible. 
To prove the last statement it is enough to notice that,  for every $y \in Y$, every connected chain  of closures of orbits meeting $\cU_y$ is contained in $\cU_y$.
 \end{proof}

From Lemmata \ref{lem:Uorbits}  and  \ref{lem:extremal}  we get the following:

\begin{corollary}\label{cor:unif}
If $\cU_Y^- \not= \emptyset$ (resp. $\cU_Y^+ \not= \emptyset$), then it is irreducible, it is the sink (resp. the source) of the action on $\cU_Y$, and meets $\cU_y$ for every $y \in Y$. Moreover, if  $\cU_Y^0$ is reducible, then $\cU_Y^\pm$ are not empty, irreducible, and are the extremal fixed point components of the action.
\end{corollary}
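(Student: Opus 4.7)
The plan is to deduce the Corollary by showing that any nonempty $\cU_Y^\pm$ automatically contains an extremal fixed point component of the induced action on $\cU_Y$, so that the hypothesis of Lemma \ref{lem:extremal} applies and yields irreducibility together with the property of meeting every $\cU_y$ in one stroke.

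Concretely, to treat the case $\cU_Y^- \neq \emptyset$, I would pick any irreducible component $F$ of $\cU_Y^-$ and invoke Lemma \ref{lem:Uorbits}, according to which no $1$-dimensional orbit of $\cU_Y$ has its source in $\cU_Y^-$; in particular none emanates from $F$. Since every nonzero positive-weight normal vector at $F$ would produce a $1$-dimensional orbit with $F$ as source, this forces $\nu^+(F) = 0$, and so by the dimension count of Bia{\l}ynicki-Birula cells $F$ must be the sink of its connected component of $\cU_Y$, hence an extremal fixed point component. Lemma \ref{lem:extremal} then gives at once the irreducibility of $\cU_Y^-$, its identification with the sink of the action, and the fact that it meets $\cU_y$ for every $y \in Y$. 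The case $\cU_Y^+ \neq \emptyset$ is entirely symmetric, with $\nu^-(F) = 0$ replacing $\nu^+(F) = 0$ and the source replacing the sink.

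For the last assertion I would argue by contraposition. Assuming $\cU_Y^0$ is reducible, Lemma \ref{lem:extremal} prevents $\cU_Y^0$ from containing the sink or source of the action on $\cU_Y$; on the other hand, by Lemma \ref{lem:Uorbits} the sink lies in $\cU_Y^- \cup \cU_Y^0$ and the source in $\cU_Y^0 \cup \cU_Y^+$. Combining the two, the sink must actually lie in $\cU_Y^-$ and the source in $\cU_Y^+$, so both are nonempty; the irreducibility and the identification with the extremal components then follow from the first part of the Corollary just established.

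The main delicate point I expect is the translation at the heart of the first step, namely passing from the combinatorial statement ``no orbit of $\cU_Y$ has source in $\cU_Y^-$'' to the infinitesimal statement $\nu^+(F) = 0$ for every irreducible component $F \subset \cU_Y^-$. It is this translation that genuinely exploits the equalized action setup (cf. Proposition \ref{prop:equalU}), ensuring that positive-weight normal directions at $F$ correspond to actual $1$-dimensional orbits one can detect in $\cU_Y$. Once it is in place, the remainder is a purely formal bookkeeping of how the three strata $\cU_Y^-$, $\cU_Y^0$, $\cU_Y^+$ of the fixed locus can accommodate sinks and sources.
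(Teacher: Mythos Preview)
Your approach is correct and coincides with the paper's, which simply records the Corollary as a direct consequence of Lemmata \ref{lem:Uorbits} and \ref{lem:extremal}; you have just written out the implicit step that any component $F\subset\cU_Y^-$ has $\nu^+(F)=0$ and is therefore a sink, so that Lemma \ref{lem:extremal} applies.

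One small remark: the ``delicate translation'' you flag---from ``no orbit of $\cU_Y$ has source in $\cU_Y^-$'' to ``$\nu^+(F)=0$''---does not actually rely on equalization (Proposition \ref{prop:equalU}). It follows directly from the Bia{\l}ynicki-Birula description: $X^+(F)$ is an affine bundle over $F$ of rank $\nu^+(F)$, so if $\nu^+(F)>0$ there are points of $X^+(F)\setminus F$, i.e.\ genuine $1$-dimensional orbits in $\cU_Y$ with source in $F$, contradicting Lemma \ref{lem:Uorbits}. Equalization plays no role here.
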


Note that $\cU_Y^0$ may indeed be reducible, as the following example shows.

\begin{example} 
Consider the equalized action on $\P^n$ which has two linear spaces $\P^k$ \and $\PP^{n-k-1}$ as extremal fixed point components $(1 \le k \le n-2)$ and the induced action on the Grassmannian of lines $\DA_n(2)$. 
This action has extremal fixed point components isomorphic to $\DA_k(2)$ 
and $\DA_{n-k-1}(2)$, 
and one inner fixed point $Y$ component isomorphic to $\DA_{k}(1)\times \DA_{n-k-1}(1)$; 
in this case  $\cU_Y^0$, which parametrizes the lines contained in $Y$, has two irreducible components.
\end{example}

An important consequence of the irreducibility of $\cU^\pm$ is the uniqueness of the fixed point components  of the action on $X$ linked to a given one by a bridge.

\begin{corollary}\label{cor:unique}
If $Y \in \cY$ is a fixed point component such that  $\cU_Y^- \not =\emptyset$ (resp.  $\cU_Y^+ \not =\emptyset$) then there exists a unique fixed point component $Y'$ of weight $\mu_L(Y)-d$ (resp. of weight $\mu_L(Y)+d$) which is linked to $Y$ by a bridge. 
\end{corollary}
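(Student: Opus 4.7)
The plan is to construct a morphism from the image in $\cM$ of $\cU_Y^-$ to $X^{\C^*}$ which records the sink of each bridge, and then use irreducibility of $\cU_Y^-$ together with the finiteness of $\cY$ to force this image into a single fixed component.

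By Corollary \ref{cor:unif} the non-empty set $\cU_Y^-$ is irreducible, so $\cB_Y := p(\cU_Y^-)$ is an irreducible closed subvariety of $\cM^{\C^*}$ parametrizing bridges with source in $Y$ (closedness follows from properness of $p$). I would then examine the restriction $p^{-1}(\cB_Y) \to \cB_Y$, which is a $\P^1$-bundle carrying a fiberwise $\C^*$-action inherited from the equalized action on $\cU$ (Proposition \ref{prop:equalU}). Its relative fixed locus is a degree-$2$ cover of $\cB_Y$; equalization forces the $\C^*$-weight on the tangent space along each fiber to be $-1$ at the sink and $+1$ at the source of the corresponding bridge, so this cover splits globally into a ``sink section'' and a ``source section''. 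Composing the sink section with $q:\cU\to X$ yields a morphism $\phi : \cB_Y \to X^{\C^*}$. Since $\cB_Y$ is irreducible and $X^{\C^*} = \bigsqcup_{Y' \in \cY} Y'$ is a finite disjoint union of closed subvarieties, the image $\phi(\cB_Y)$ must be contained in a single component $Y'$; this is the unique fixed point component linked to $Y$ by a bridge with source in $Y$.

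The weight identity then follows from Lemma \ref{lem:AMvsFM}: the $L$-degree of any bridge from $Y$ to $Y'$ equals $\mu_L(Y) - \mu_L(Y')$, and this degree is $d$ by hypothesis on $\cM$, so $\mu_L(Y') = \mu_L(Y) - d$. The parenthetical case $\cU_Y^+ \neq \emptyset$ is completely symmetric, with the source section of the $\P^1$-bundle playing the role of the sink section and yielding $\mu_L(Y') = \mu_L(Y) + d$.

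The only delicate point is verifying that the sink section is a genuine morphism rather than a merely set-theoretic function; this is where equalization is essential, since the tangent-weight separation along the $\P^1$-fibers is precisely what globally distinguishes the two irreducible components of the degree-$2$ cover $p^{-1}(\cB_Y)^{\C^*}\to\cB_Y$. Once that is in place, the rest is a clean irreducibility/discreteness argument.
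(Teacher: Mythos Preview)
Your argument is correct and rests on the same two ingredients as the paper's proof: the irreducibility of $\cU_Y^-$ (Corollary~\ref{cor:unif}) and the fact that the possible ``other endpoints'' of bridges form a finite disjoint collection of closed sets. The difference is one of packaging. You build a sink morphism $\phi:\cB_Y\to X^{\C^*}$ and then invoke irreducibility in $X$; the paper stays in $\cM$ and simply observes that
\[
p(\cU_Y^-)=\bigsqcup_{\mu_L(Y_i)=\mu_L(Y)-d}\,p(\cU_{Y_i}^+)\cap p(\cU_Y^-),
\]
a disjoint union of closed subsets of the irreducible variety $p(\cU_Y^-)$, so only one term survives. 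This bypasses entirely the construction of the sink section and the tangent-weight discussion you flag as ``delicate'': one never needs $\phi$ to be a morphism, because the decomposition by sink-component is already visible as a closed partition inside $\cM$ (each $p(\cU_{Y_i}^+)$ is the image of a fixed component under the proper map $p$). Your route is sound, but the paper's is shorter precisely because it avoids that detour.
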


\begin{proof}
The  assertion follows from the fact that 
\[p(\cU_Y^-) = \bigsqcup_{\mu_L(Y_i)=\mu_L(Y)-d} p(\cU_{Y_i}^+) \cap p(	\cU^-_Y),\] 
and the irreducibility of $\cU_Y^-$. Note that the union is disjoint since, by Lemma \ref{lem:AMvsFM}, there are no curves of $\cM$ meeting two components of the same weight.
 \end{proof}

\section{Action on $X$: orbit graph and BB-cells}\label{sec:fpc}

In this section we will work under the following assumptions:

\begin{setup}\label{set:basicZ}
$X$ is a smooth projective variety of Picard number one, endowed with an equalized $\C^*$-action of criticality $r$, admitting a beautiful family of rational curves $\cM$, with evaluation morphism $q: \cU \to X$ (see Section \ref{ssec:ratcurves}). We denote by $L$ the ample generator of $\Pic(X)$, and by $d$ the $L$-degree of the curves of $\cM$.
\end{setup}

We start by describing the fixed point components of the action on $\cU$, which, as noticed in Section \ref{ssec:actionU}, are contained in $\bigsqcup_{Y \in \cY}\, \cU_Y$, where $\cU_Y=q^{-1}(Y)$. For every $Y \in \cY$ we will consider the fixed point components $\cU_Y^-,\cU_Y^0,\cU_Y^+$, as defined in Section \ref{ssec:actionU}.

\begin{proposition}\label{prop:ufixed} Let $X$ and $\cM$ be as in Setup \ref{set:basicZ}. Let $Y \in \cY$ be a fixed point component of the $\C^*$-action on $X$. 
Then:
\begin{itemize}[leftmargin=25pt]
\item[(e1)] if $Y$ is extremal and $\dim Y=0$, then the action on $\cU_Y$ is trivial;
\item[(e2)] if  $Y$ is the sink (resp. the source) and $\dim Y>0$, then $\cU_Y^0$ and $\cU_Y^+$ (resp. $\cU_Y^-$  and $\cU_Y^0$)  are non-empty, and are the sink and the source of the action on $\cU_Y$. There are no inner fixed point components;
\item[(i)] if $Y$ is an inner fixed point component then $\cU_Y^-$ and $\cU_Y^+$ are non-empty, and are the sink and the source of the action on $\cU_Y$; the inner fixed point components of the action are the irreducible components of $\cU_Y^0$, if $\cU_Y^0 \not = \emptyset$. 
\end{itemize}
\end{proposition}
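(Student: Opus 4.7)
The proposition can be proved case by case, using four main tools: the trichotomy $\cU_Y^{\C^*}=\cU_Y^-\sqcup\cU_Y^0\sqcup\cU_Y^+$ from Section \ref{ssec:actionU}; the orbit classification of Lemma \ref{lem:Uorbits}; the irreducibility and projectivity of the fibers $\cU_y$ (Remark \ref{rem:MU}); and Corollary \ref{cor:unif}. As a useful preliminary observation, at a fixed point $([C],y)\in\cU^{\C^*}$ the $\C^*$-weight on the vertical tangent direction $T_{C,y}\subset T_{\cU,([C],y)}$ equals, by equalization, $-1$, $0$, or $+1$ according to whether the point lies in $\cU_Y^-$, $\cU_Y^0$, or $\cU_Y^+$. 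This ``vertical weight'' provides an ordering of the fixed points above $Y$ compatible with any $\C^*$-linearization of an ample line bundle of the form $q^*L^{\otimes m}\otimes \omega_p^\vee$ for $m\gg 0$, and will be used to locate sinks and sources.

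In case (e1), both $\cU_Y^-$ (no bridge can have source at an isolated extremal sink) and $\cU_Y^0$ (no curve fits in an isolated point) are automatically empty, so Lemma \ref{lem:Uorbits} forbids any non-trivial orbit in $\cU_Y$, and the action on $\cU_Y$ is trivial. In case (e2), assuming without loss of generality that $Y=Y_0$ is the positive-dimensional sink, only $\cU_Y^-=\emptyset$ is automatic, and Lemma \ref{lem:Uorbits} leaves only orbits with sink in $\cU_Y^0$ and source in $\cU_Y^+$. This immediately rules out inner components, since a component of $\cU_Y^0$ cannot appear as the source of any orbit and a component of $\cU_Y^+$ cannot appear as a sink. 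For non-emptiness I would fix $y\in Y$: by Lemma \ref{lem:trivial} a general curve of $\cM$ through $y$ is not contained in $Y$, so the induced $\C^*$-action on the non-empty irreducible projective variety $\cU_y$ is non-trivial, and any non-trivial orbit produces simultaneously a point of $\cU_Y^0$ as its sink and a point of $\cU_Y^+$ as its source. Corollary \ref{cor:unif} then identifies $\cU_Y^+$ as the irreducible source of the action on $\cU_Y$, and the orbit classification forces $\cU_Y^0$ to be the sink.

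In case (i), the same orbit analysis shows that components of $\cU_Y^-$ appear only as sinks and components of $\cU_Y^+$ only as sources, while components of $\cU_Y^0$ -- being candidates for both sources of type-1 orbits and sinks of type-2 orbits -- play the role of the inner components. The main obstacle is the non-emptiness of both $\cU_Y^-$ and $\cU_Y^+$. Here I would apply Corollary \ref{cor:degC1} to a non-invariant $[C]\in\cM$ passing through a point $y\in Y$: when $C$ falls into case~2 of Proposition \ref{prop:degC1}, both limit curves $C_-$ and $C_+$ are bridges linking $Y$ respectively to components of weights $\mu_L(Y)-d$ and $\mu_L(Y)+d$, populating $\cU_Y^-$ and $\cU_Y^+$ at once. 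The existence of such a $C$ is produced from the innerness of $Y$: since $\nu^\pm(Y)>0$, the normal bundle $N_{Y/X}$ carries non-zero positive- and negative-weight summands, so near $y$ the variety $X$ contains orbit closures flowing out of $Y$ in both directions, and using the fact that $\cM$ is dominating one finds non-invariant curves of $\cM$ through $y$ whose ruled surfaces $S_C$ (Construction \ref{const:Sc}) fall in case~2. Once this is established, Corollary \ref{cor:unif} yields irreducibility and extremality of $\cU_Y^\pm$, the vertical-weight comparison locates them as sink and source, and the remaining fixed components are contained in $\cU_Y^0$ and are inner.

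The hardest step is this non-emptiness claim in case (i): unlike the extremal cases, where emptiness of one side is automatic from the position of $Y$ in the weight order, here both $\cU_Y^-$ and $\cU_Y^+$ must be simultaneously populated, and one has to combine the detailed ruled-surface analysis of Construction \ref{const:Sc} and Proposition \ref{prop:degC1} with the innerness hypothesis to exhibit bridges in $\cM$ linking $Y$ to both adjacent weight components.
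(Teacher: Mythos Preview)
Your outline correctly identifies the architecture of the proof, but the two non-emptiness claims that you yourself flag as delicate are not actually established by the arguments you give, and the paper's proof handles both quite differently.

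In case (e2), your route to $\cU_Y^0\neq\emptyset$ is ``action on $\cU_y$ non-trivial, hence a type-2 orbit exists''. But Lemma~\ref{lem:trivial} only says that a general curve of $\cM$ \emph{meeting} $Y$ is not contained in $Y$; it says nothing about curves through a fixed $y$, and even if such a curve exists it may well be a bridge, hence a $\C^*$-fixed point of $\cM$. So you have not excluded the scenario $\cU_y\subset\cU_Y^+$ (every curve of $\cM$ through $y$ a bridge with sink $y$), in which the action on $\cU_y$ is trivial and $\cU_Y^0$ could be empty. The paper avoids this by a global argument: since $\Pic(X)\simeq\Z$, the union of the closures $B^-(Y')$ for $Y'\neq Y$ has codimension $\geq 2$, so the general curve $C$ of $\cM$ lies entirely in $X^-(Y)$, forcing its limit curve $C_-$ to lie in $Y$; this produces a point of $\cU_Y^0$ directly.

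In case (i), your plan is to exhibit a curve $C$ through $y$ falling in case~2) of Proposition~\ref{prop:degC1}, giving bridges to both sides at once. The justification (``$\nu^\pm(Y)>0$, hence orbit closures in both directions, hence by dominance of $\cM$\dots'') is not a proof: knowing that $X$ has orbits on both sides of $Y$ does not produce a curve of $\cM$ whose ruled surface $S_C$ has $e=2$. The paper does not attempt this. Instead it argues in two steps: first, rc-connectedness plus Corollary~\ref{cor:degC1} gives a bridge to \emph{one} side, say $\cU_Y^-\neq\emptyset$; second, one applies Lemma~\ref{lem:trivial} not to $Y$ but to $B^+(Y)$, picking $x\in X^+(Y)\setminus B^+(Y')$ (where $Y'$ is the unique neighbour of lower weight, by Corollary~\ref{cor:unique}) and a curve $C$ through $x$ not contained in $B^+(Y)$. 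Then the general point of $C$ has $x'_+\notin Y\cup Y'$, while $x_+\in Y$, so the limit curve $C_+$ is forced to be a bridge from $Y$ to a component of weight $\mu_L(Y)+d$, giving $\cU_Y^+\neq\emptyset$. The uniqueness from Corollary~\ref{cor:unique} is essential here, and your argument does not invoke it.

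A minor point: your ``vertical weight'' computation has the signs reversed. If $y$ is the \emph{source} of a bridge $C$ (so $([C],y)\in\cU_Y^-$), then $T_{C,y}\subset N^+(Y)_y$ has weight $+1$, not $-1$. This does not affect the argument since Lemma~\ref{lem:Uorbits} already provides the ordering, but the line bundle $q^*L^{\otimes m}\otimes\omega_p^\vee$ would then give $\cU_Y^-$ the \emph{highest} weight on $\cU_Y$, not the lowest.
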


\begin{proof}
Let us assume first that the fixed point component  $Y\subset X$ is the sink of the action; the case in which $Y$ is the source is analogous.
In case (e1) we have that $\cU_Y^- \sqcup \,\cU_Y^0= \emptyset$ and so, by Lemma \ref{lem:extremal}, $\cU_Y^+$ is irreducible. Hence $\cU_Y$ has a unique fixed point component, and this is only possible if the action on $\cU_Y$ is trivial.

In case (e2), we have that $\cU_Y^-=\emptyset$. We will show that $\cU_Y^0\neq\emptyset$; then, by Lemma \ref{lem:extremal} and  Lemma \ref{lem:Uorbits} we will get that $\cU_Y^0$, $\cU_Y^+$ are, respectively, the sink and the source of the action. 
Since  $\Pic(X)\simeq \Z$, by \cite[Lemma~2.8.1]{WORS1}  the union of closures of the BB-cells $B^-(Y')$  over the fixed point components $Y'$ different from $Y$ has codimension at least two in $X$. In particular, by \cite[Proposition II.3.7]{kollar},  the general curve $C$ of $\cM$ is contained in its complement, which is the BB-cell $X^-(Y)$. It then follows that the limit curve $C_-$ must be contained in $Y$; we conclude that  $\cU_Y^0\neq\emptyset$.

Let now $Y$ be an inner fixed point component. We claim that  $\cU_Y^- \sqcup \cU_Y^+ \not= \emptyset$, i.e., that there exists a bridge with an extremal fixed point in $Y$. 
To prove the claim we notice that, since $X$ is rationally chain connected with respect to the family $\cM$, there exists a curve $C$ of $\cM$ meeting $Y$ and not contained in it. Then, either $C$ is a bridge or, by Corollary \ref{cor:degC1}, one of the limit curves $C_\pm$ is a bridge.

Without loss of generality let us assume that $\cU_Y^- \not = \emptyset$. By Corollary \ref{cor:unique} there exists a unique
fixed point component $Y'$ of weight $\mu_L(Y)-d$ linked to $Y$ by a bridge. 
Let $x$ be a general point in $X^+(Y)\setminus B^+(Y')\neq \emptyset$. By Lemma \ref{lem:trivial}, there exists a curve $C$ of $\cM$ passing through $x$  not contained in $B^+(Y)$, and so $C$ intersects $B^+(Y')\cup B^+(Y)$ in a finite number of points. In other words, given a general point $x'\in C$, we have that $x'_+\notin Y\cup Y'$. Since, on the other hand, $x_+\in Y$, we may conclude, by Lemma \ref{lem:AMvsFM} and Corollary \ref{cor:unique}, that the limit curve $C_+$ is a bridge linking $Y$ with a component $Y''$ of weight $\mu_L(Y'')=\mu_L(Y)+d$. In particular this shows that $\cU_Y^+ \not = \emptyset$. Then $\cU_Y^-$ and $\cU_Y^+$ are the extremal fixed point components by Corollary \ref{cor:unif}.
 \end{proof}

We can now prove the main statement of the Section, which describes the $L$-weights of the action on $X$, shows the existence of a unique fixed point component for each weight, and  the existence of bridges passing by every fixed point.

\begin{theorem}\label{thm:actionx}
Let $X$ and $\cM$ be as in Setup \ref{set:basicZ}. Then:
\begin{enumerate}[leftmargin=25pt]
\item the weights of the action are $0 < d < 2d < \dots < (r-1)d < rd$;
\item for every  $j =0, \dots, r$  there exists a unique  $Y_j \in \cY$ such that $\mu_L(Y_j)=jd$;
\item through every point $y \in Y_j$ with $j \not = 0$ there are bridges linking  $Y_{j-1}$ and $Y_{j}$;
\item through every point $y \in Y_j$ with $j \not=r$ there are bridges linking $Y_j$ and $Y_{j+1}$.
\end{enumerate}
\end{theorem}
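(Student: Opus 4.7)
The plan is to derive the four claims from Proposition~\ref{prop:ufixed} and Corollary~\ref{cor:unique}, which together say that every non-extremal fixed component has bridges both down and up to uniquely determined fixed components of weights $\mu_L(Y)\pm d$. The main tasks are: to show that the weights exhaust $\{0,d,2d,\dots,rd\}$, that exactly one fixed component is attached to each weight, and that bridges pass through every point of every fixed component.

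I would begin with (1). For any $Y\neq Y_-$, Proposition~\ref{prop:ufixed} yields $\cU_Y^-\neq\emptyset$, so by Corollary~\ref{cor:unique} and Lemma~\ref{lem:AMvsFM} there is a bridge from $Y$ to a uniquely determined component of weight $\mu_L(Y)-d$. Iterating, I obtain a strictly decreasing chain of distinct fixed components terminating at $Y_-$, which shows that $\mu_L(Y)$ is a non-negative multiple of $d$. Applying the same procedure starting from $Y_+$ produces a chain $Y_+=W_0,W_1,\dots,W_m=Y_-$ with $\mu_L(W_i)=\delta-id$; in particular $\delta=md$ and the values $0,d,2d,\dots,md$ all occur as weights. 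Since every weight lies in $\{0,d,\dots,md\}$, this set of $m+1$ elements must contain all $r+1$ distinct weights of the action, so $r+1\leq m+1$; combined with the reverse inequality coming from the chain, we obtain $r=m$, which is (1).

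For (2), I would induct on $j$. The base case $j=0$ is immediate from the definition of the sink. Assuming $Y_j$ is the unique fixed component of weight $jd$ for some $j<r$, Proposition~\ref{prop:ufixed} gives $\cU_{Y_j}^+\neq\emptyset$, and Corollary~\ref{cor:unique} then singles out a unique $Y_{j+1}$ of weight $(j+1)d$ linked to $Y_j$ by a bridge. If $Y'$ is any fixed component of weight $(j+1)d$, the same Corollary applied to $\cU_{Y'}^-\neq\emptyset$ produces a unique component of weight $jd$ linked to $Y'$, which the inductive hypothesis forces to be $Y_j$; the upward uniqueness from $Y_j$ then forces $Y'=Y_{j+1}$.

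Finally, for (3) and (4), fix $y\in Y_j$ with $j\neq 0$. In each of the cases (e1), (e2), (i) of Proposition~\ref{prop:ufixed} applied to $Y_j$ we have $\cU_{Y_j}^-\neq\emptyset$, and by Corollary~\ref{cor:unif} this set meets $\cU_y$. This produces a bridge through $y$ whose sink lies in a weight-$(j-1)d$ component, and (2) identifies that component with $Y_{j-1}$; statement (4) is obtained symmetrically from $\cU_{Y_j}^+$. The main obstacle I foresee is the bookkeeping in the first step: one must be careful to rule out the possibility of gaps in the weight sequence, and the cleanest way I see is the double-counting against the criticality using the chain $W_i$ built from $Y_+$ downward.
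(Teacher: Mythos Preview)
Your proof is correct and follows essentially the same route as the paper's: both use Proposition~\ref{prop:ufixed} to guarantee $\cU_Y^\pm\neq\emptyset$ in the appropriate cases, invoke Corollary~\ref{cor:unif} to get bridges through every point, and apply Corollary~\ref{cor:unique} for the uniqueness of neighbouring components. The only cosmetic differences are that the paper argues directly that inner weights $a_i$ have $a_i\pm d$ as weights (rather than building your explicit chain $W_i$ from $Y_+$ down), and it proves (2) by taking a minimal counterexample instead of your forward induction; these are equivalent formulations of the same argument.
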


\begin{proof}
Let $Y$ be a fixed point component. By Proposition \ref{prop:ufixed}, if $Y$ is the sink (resp. the source), then $\cU_Y^+$ (resp. $\cU_Y^-$) is non-empty, and if $Y$ is an inner fixed point component, then both $\cU_Y^-$, $\cU_Y^+$ are non-empty.

Moreover, by Corollary \ref{cor:unif}, if $\cU_Y^\pm \not = \emptyset$, then $\cU_Y^\pm \cap \cU_y \not = \emptyset$ for every $y \in Y$. In particular, for every fixed point not contained in the sink there exists a bridge with source at that point, and for every fixed point not contained in the source there exists a bridge with sink at that point. Therefore, if $a_i$ is a weight with $0<i<r$ then also $a_i\pm d$ are weights of the action. Since every fixed point component  can be joined to the sink by a connected chain of bridges, by Lemma \ref{lem:AMvsFM} every weight is a multiple of $d$.

It remains to prove the uniqueness in (2) and we proceed by contradiction.  Let $j$ be the minimum index for which there exist two fixed point components $Y_j, Y'_j$ of weight $jd$. Since the sink and the source are  the only fixed point components of weight $0$ and $rd$, respectively, we have that $0 < j <r$. Hence, by the first part of the proof, we have that the the unique component of weight $(j-1)d$  is linked by a bridge to both $Y_j$ and $Y'_j$; this contradicts Corollary \ref{cor:unique}.
 \end{proof}

\begin{notation}
Since for every $L$-weight $a_j=jd$ there exists a unique fixed point component $Y_j$ of weight $a_j$ we will set 
\[
\cU_j:=\cU_{Y_j},\quad \cU_j^{i}:=\cU_{Y_j}^{i} \ (i=+,0,-),\quad B_j^\pm:=B^\pm(Y_j), \quad 
\nu^\pm_j=\nu^\pm(Y_j),
\]
for every $j=0, \dots, r$. 
\end{notation}

\begin{corollary}\label{cor:sections}
For every $j=0,\dots, r-1$ we have $p(\cU_j^+)=p(\cU_{j+1}^-)$.
\end{corollary}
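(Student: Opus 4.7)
The plan is to observe that both sides of the equality parametrize exactly the same set of bridges, namely those whose sink lies in $Y_j$ and source lies in $Y_{j+1}$.

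By the definition given in Section \ref{ssec:actionU}, $\cU_j^+$ consists of pairs $([C],y)$ where $C$ is a bridge with sink at $y\in Y_j$, and $\cU_{j+1}^-$ consists of pairs $([C],y)$ where $C$ is a bridge with source at $y\in Y_{j+1}$. Consequently $p(\cU_j^+)$ is the locus in $\cM$ parametrizing bridges with sink in $Y_j$, while $p(\cU_{j+1}^-)$ is the locus parametrizing bridges with source in $Y_{j+1}$.

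To identify these two loci, I would apply Lemma \ref{lem:AMvsFM}: every bridge $C \in \cM$ has $L$-degree $d$, so its sink $y_-$ and source $y_+$ satisfy $\mu_L(y_+)-\mu_L(y_-)=d$. Combined with Theorem \ref{thm:actionx}, which asserts that the weights are $0,d,\dots,rd$ and that there is a unique fixed point component for each weight, this gives: a bridge has sink in $Y_j$ if and only if its source has weight $(j+1)d$, which by uniqueness happens if and only if its source lies in $Y_{j+1}$. Hence the two sets coincide.

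The main (and only) subtlety is ensuring that such bridges actually exist, so that one is not comparing two empty sets vacuously, but this is already guaranteed by parts (3) and (4) of Theorem \ref{thm:actionx}: through every point of $Y_j$ (for $j<r$) there passes a bridge linking $Y_j$ and $Y_{j+1}$, and symmetrically for $Y_{j+1}$. No further obstacle is expected.
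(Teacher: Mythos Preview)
Your proof is correct and follows essentially the same approach as the paper: both arguments observe that, by Theorem~\ref{thm:actionx} (combined with Lemma~\ref{lem:AMvsFM}), a bridge of $\cM$ has sink in $Y_j$ if and only if it has source in $Y_{j+1}$, so the two images under $p$ coincide. Your version is slightly more explicit in invoking Lemma~\ref{lem:AMvsFM} and the uniqueness in Theorem~\ref{thm:actionx}(2), while the paper states the conclusion more tersely and adds the observation that the corresponding points of $\cU_j^+$ and $\cU_{j+1}^-$ are linked by an orbit in $\cU$.
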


\begin{proof} By Theorem \ref{thm:actionx} every bridge $C$ with source in $Y_j$ has sink in $Y_{j+1}$ and viceversa, therefore every point of $\cU_j^+$ is linked to a unique point of $\cU_{j+1}^-$ by the closure of the orbit $([C],\C^*x)$, with $x \in C$ general, and the statement follows.
 \end{proof}

\subsection{The orbit graph}

Note that, by Theorem \ref{thm:actionx},  we have the following order on the fixed point components of the action on $X$:
\[
Y_0 \orbits Y_1 \orbits Y_2 \orbits \dots \orbits Y_{r-1} \orbits Y_r.
\]
In this section we will show that the orbit graph of the action on $X$ is complete, i.e., that every two fixed point components are linked by the closure of an orbit. 

Recalling that a fixed point of $\cM$ parametrizes either a fixed curve or a bridge, we see that
the fixed point components of the action on $\cM$ are images via $p$ of the varieties $\cU_j^i$, $(i=-, 0, +)$. We then set:
\begin{itemize}[topsep=10pt,itemsep=2pt]
\item $\cM_{j}:=p(\cU_{j}^+)=p(\cU_{j+1}^-)  \quad \text{for} \ j=0, \dots, r-1$;
\item $\cM_j^0:= p(\cU_{j}^0)  \quad \text{for} \ j=0, \dots, r$.
\end{itemize}
By Proposition \ref{prop:ufixed} the fixed point components $\cM_j$ are non-empty and irreducible for every $j=0, \dots, r-1$. The fixed point components $\cM_j^0$ can be empty or reducible, and one may prove the following: 

\begin{lemma}
Every irreducible component of $\cM_j^0$  parametrizes a beautiful family of rational curves in $Y_j$.
\end{lemma}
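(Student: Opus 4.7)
The plan is to verify the three defining conditions of a beautiful family for $\cN$ regarded as a family of curves in $Y_j$: that it is an irreducible component of $\rat^n(Y_j)$, that it is unsplit, and that the evaluation morphism restricted to it is smooth and surjective onto $Y_j$.

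The first two conditions follow readily from earlier material. By the second part of Remark \ref{rem:beauty} applied with $Y=Y_j$, the set $\cM_j^0$ of elements of $\cM$ parametrizing curves contained in $Y_j$ is a union of irreducible components of $\rat^n(Y_j)$, so any irreducible component $\cN$ of $\cM_j^0$ is automatically a component of $\rat^n(Y_j)$. The unsplit condition follows because $\cM_j^0$ is a fixed locus of the induced $\C^*$-action on $\cM$, hence closed in the proper variety $\cM$, so $\cN$ is proper.

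The heart of the proof is to establish smoothness and dominance of the evaluation $q|_{\cN^0}: \cN^0 \to Y_j$, where $\cN^0 := p^{-1}(\cN)$ is the universal family of $\cN$. I would argue this via a weight-decomposition on tangent spaces. First, the base change $\cU_j := q^{-1}(Y_j) \to Y_j$ of the smooth morphism $q: \cU \to X$ along the closed embedding $Y_j \hookrightarrow X$ is smooth, and it is $\C^*$-equivariant with trivial action on $Y_j$. Second, by Proposition \ref{prop:ufixed} together with Iversen's smoothness theorem, $\cN^0$ is a smooth irreducible component of the $\C^*$-fixed locus on $\cU_j$, so local linearization identifies, for every $u \in \cN^0$, the tangent space $T_{\cN^0, u}$ with the zero-weight eigenspace of $T_{\cU_j, u}$. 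The surjective differential $dq_u: T_{\cU_j, u} \to T_{Y_j, q(u)}$ is $\C^*$-equivariant and lands in a space of pure weight zero; consequently it annihilates all nonzero-weight eigenspaces and restricts to a surjection $T_{\cN^0, u} \twoheadrightarrow T_{Y_j, q(u)}$. Thus $q|_{\cN^0}$ is smooth at every $u$, hence smooth; and being proper with open image in the irreducible variety $Y_j$, it is surjective.

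The main obstacle, precisely the one that this weight argument is designed to circumvent, is the dominance of $q|_{\cN^0}$ onto $Y_j$: a priori, an inner irreducible component of $\cU_j^0$ might cover only a proper subvariety of $Y_j$, since there is no obvious reason a given fiber $\cU_y$ should intersect every component of $\cU_j^0$, unlike the extremal cases handled by Corollary \ref{cor:unif}. The tangent-space argument sidesteps this pointwise issue by producing smoothness at every point of $\cN^0$ simultaneously, after which properness yields surjectivity automatically.
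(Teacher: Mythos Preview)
Your proof is correct. Both your argument and the paper's hinge on the same underlying fact---the weight decomposition of the tangent bundle at fixed points of a $\C^*$-action---but you apply it in a different place. The paper works on $X$: it invokes the characterization of beautiful families from Remark~\ref{rem:beauty} (an unsplit family is beautiful iff $\nu^*T$ is nef for every normalized curve $\nu$), and then observes that $T_{Y_j}$ is a direct summand of $T_X|_{Y_j}$ by the eigenspace splitting (\ref{eq:dectang}), so nefness of $\nu^*T_X$ (from the beauty of $\cM$ on $X$) passes to nefness of $\nu^*T_{Y_j}$. You instead work upstairs on $\cU_j$: you identify $T_{\cN^0,u}$ with the weight-zero part of $T_{\cU_j,u}$ and use equivariance of $dq_u$ to deduce surjectivity onto $T_{Y_j,q(u)}$ directly, then get dominance from properness. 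The paper's route is shorter because the nefness criterion packages the smoothness and dominance conclusions in one stroke; your route is more self-contained in that it does not rely on that equivalence, and your remark about the potential ``dominance obstacle'' is apt---it is precisely what the nefness criterion or your pointwise differential argument buys, whereas a naive attempt via Corollary~\ref{cor:unif} would only handle the extremal components.
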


\begin{proof}
Let $\cN$ be an  irreducible component of $\cM_j^0$. By Remark \ref{rem:beauty}, $\cN$ is an irreducible component of the normalization $\rat^n(Y_j)$ of $\rat(Y_j)$, hence it is a family of rational curves in $Y_j$, and it is obviously unsplit. Moreover, since $T_{Y_j}$ is a direct summand of ${T_X}_{|Y_j}$ (see Section \ref{ssec:c*}), it follows that $\nu^*T_{Y_j}$ is nef, for the normalization $\nu$ of any curve of the family $\cN$, and we conclude that $\cN$ is beautiful by Remark \ref{rem:beauty}.
 \end{proof}

The main result in this section is the following:

\begin{theorem}\label{thm:complete}
For every $i<j$ there exist orbits with sink in $Y_i$ and source in $Y_j$ and orbits with sink in $\cM_i$ and source in $\cM_j$. In particular the orbit graph of the action on $X$ is complete.
\end{theorem}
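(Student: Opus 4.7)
The plan is to argue by induction on $k:=j-i$, handling the two statements together. For the base case $k=1$, the $X$-part is Theorem \ref{thm:actionx}(3)-(4), since a bridge from $Y_i$ to $Y_{i+1}$ is by definition the closure of a $1$-dimensional orbit with sink in $Y_i$ and source in $Y_{i+1}$. For the $\cM$-part, pick any $y\in Y_{i+1}$: by Proposition \ref{prop:ufixed} the sets $\cU_y\cap\cU_{i+1}^-$ and $\cU_y\cap\cU_{i+1}^+$ are the sink and source of the induced $\C^*$-action on $\cU_y$ (the extremal cases for $Y_{i+1}$ are treated analogously), and any $1$-dimensional orbit joining them projects via $p$ to a $1$-dimensional orbit in $\cM$ from $\cM_i=p(\cU_{i+1}^-)$ to $\cM_{i+1}=p(\cU_{i+1}^+)$; the projection is non-constant because bridges parametrized by $\cM_i$ and $\cM_{i+1}$ have different extremal fixed components, so $\cM_i\cap\cM_{i+1}=\emptyset$.

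For the inductive step, fix $i<j$ with $j-i=k\ge 2$. By induction there exists an orbit closure $D$ with sink in $Y_i$ and source $y'\in Y_{j-1}$, and by Theorem \ref{thm:actionx}(4) a bridge $C\in\cM$ with sink $y'$ and source in $Y_j$. The resulting chain $D\cup C$ is a connected $\C^*$-invariant reducible curve of $L$-degree $kd$ with a single transversal node at $y'$: indeed, since the action is equalized at $Y_{j-1}$, the tangent direction of $D$ at $y'$ (where $y'$ is the source of $D$) lies in the weight-$(+1)$ subspace $N^+(Y_{j-1})_{y'}$, whereas that of $C$ (where $y'$ is the sink of $C$) lies in the weight-$(-1)$ subspace $N^-(Y_{j-1})_{y'}$, so the two branches are linearly independent in $T_{y'}X$.

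To finish, I smooth the chain using the equivariant local model at $y'$: equalization provides a $\C^*$-equivariant identification of an étale neighborhood of $y'$ with one of the origin in $T_{y'}X=N^-(Y_{j-1})_{y'}\oplus T_{Y_{j-1},y'}\oplus N^+(Y_{j-1})_{y'}$, endowed with the linear action of weights $-1,0,+1$. Inside the $\C^*$-invariant $2$-plane $\Pi\subset N^+\oplus N^-$ spanned by the tangent directions of $D$ and $C$, the non-trivial orbits form a pencil of hyperbolas degenerating to the pair of coordinate axes; each nearby hyperbola lifts to an irreducible orbit closure in $X$, whose sink (the $t\to\infty$ limit along the $N^+$ component) is close to the sink of $D$ and whose source (the $t\to 0$ limit along the $N^-$ component) is close to the source of $C$. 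Since $Y_i$ and $Y_j$ are closed fixed components, these limits must lie in $Y_i$ and $Y_j$ respectively. The same chain-and-smoothing strategy, applied to the equalized induced action on $\cM$ (Proposition \ref{prop:equalM}), yields the second assertion. The main obstacle is transferring the local linear smoothing to a genuine algebraic orbit closure with the claimed global sink and source, for which one must combine the equivariant local linearization provided by equalization with a continuity/closedness argument for sinks and sources under deformations in the Chow scheme of $\C^*$-invariant $1$-cycles.
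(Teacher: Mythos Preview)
Your base case is fine and coincides with the paper's Lemma \ref{lem:orbm1}. The inductive step, however, has a real gap, which you yourself flag in the last sentence: the equivariant linearization at $y'$ only describes $\C^*x$ inside an \'etale neighborhood of $y'$, and gives no control over the \emph{global} sink and source of the orbit once it leaves that chart. The maps $x\mapsto x_\pm$ are not continuous; the BB-cells $X^-(Y_i)$ and $X^+(Y_j)$ are only locally closed, so a point on your hyperbola, arbitrarily close to a point of $D\setminus\{y'\}\subset X^-(Y_i)$, may well lie in some $X^-(Y_{i'})$ with $i'\neq i$. The ``continuity/closedness argument'' you need is precisely the assertion that the BB-decomposition is a stratification (Proposition \ref{prop:BB1}), but that is proved in the paper \emph{using} Theorem \ref{thm:complete}, so it is not available here. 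The same objection applies verbatim to your parallel argument on $\cM$, where the fixed locus is moreover more complicated (it also contains the components $\cM_j^0$).

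The paper avoids this by a different, constructive mechanism: it bootstraps back and forth between $X$ and $\cM$. From an orbit in $\cM$ joining $\cM_i$ and $\cM_j$ one builds the ruled surface $S_C$ (Construction \ref{const:Sc}) and reads off an orbit in $X$ joining $Y_i$ and $Y_{j+1}$ as the image of a general orbit on $S_C$ (Lemma \ref{lem:orbm2}); conversely, from an orbit $\Gamma$ in $X$ joining $Y_{i+1}$ and $Y_j$ one takes $\cU_\Gamma=q^{-1}(\Gamma)$, whose extremal fixed components are identified by Proposition \ref{prop:ufixed}, and projects a general orbit there to $\cM$ (Lemma \ref{lem:orbm3}). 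In each step the sink and source of the new orbit are images of the sink and source of an auxiliary $\C^*$-variety, so they are pinned down directly rather than through a deformation. A smoothing of the kind you attempt does appear later (Lemma \ref{lem:deform+bridge}), but its proof again runs through $S_\Gamma$ and $q^{-1}(C')$, not through local linearization; if you want to salvage your strategy, that is the argument to imitate.
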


The proof of this statement will be obtained by combining  Theorem \ref{thm:actionx} together with Lemmata \ref{lem:orbm1}, \ref{lem:orbm2}, \ref{lem:orbm3} below. 

\begin{lemma}\label{lem:orbm1}
For $i=0, \dots, r-2$  there exists an orbit with sink in $\cM_i$ and source in $\cM_{i+1}$.
\end{lemma}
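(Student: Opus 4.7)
The plan is to construct the desired orbit by pushing down, via the projection $p:\cU\to\cM$, a generic $\C^*$-orbit contained inside $\cU_{i+1}:=q^{-1}(Y_{i+1})$. First, I would note that for $i\in\{0,\dots,r-2\}$ the fixed component $Y_{i+1}$ is inner, so Proposition \ref{prop:ufixed}(i) applies: $\cU_{i+1}$ carries an equalized $\C^*$-action whose sink is $\cU_{i+1}^-$ and whose source is $\cU_{i+1}^+$, both non-empty. Moreover, $\cU_{i+1}$ is irreducible, being the preimage of the irreducible variety $Y_{i+1}$ under the smooth morphism $q$, whose fibers are irreducible by Remark \ref{rem:MU}.

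Next, I would pick a general point $u\in \cU_{i+1}$. Because the sink and source of the action on $\cU_{i+1}$ are distinct, the action is non-trivial, the fixed locus is a proper closed subset of $\cU_{i+1}$, and a general $u$ satisfies $u_-\in \cU_{i+1}^-$ and $u_+\in\cU_{i+1}^+$. The key observation is that such a $u$ has non-fixed image $p(u)\in \cM$. Writing $u=([C],y)$, the condition $y=q(u)\in Y_{i+1}\subset X^{\C^*}$ forces $y$ to be a fixed point of the action on $X$; hence $u$ is $\C^*$-fixed if and only if $[C]$ is $\C^*$-fixed in $\cM$, so our non-fixed $u$ projects to a non-fixed point $p(u)\in\cM$.

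Finally, I would invoke the $\C^*$-equivariance of $p$ to obtain $p(u_\pm)=p(u)_\pm$. Since $p(u)$ is not fixed, the closure $\overline{\C^*\cdot p(u)}$ is a $1$-dimensional orbit in $\cM$, and by Corollary \ref{cor:sections} its sink lies in $p(\cU_{i+1}^-)=\cM_i$ while its source lies in $p(\cU_{i+1}^+)=\cM_{i+1}$, yielding the required orbit.

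The main subtlety in this argument is precisely the equivalence ``$u$ fixed $\iff p(u)$ fixed'' for points of $\cU_{i+1}$, because in general a fiber of $p$ (namely, a curve $C$ parametrized by a fixed point of $\cM$) need not consist of fixed points; what saves the day here is that the second coordinate $y=q(u)$ automatically lies in $X^{\C^*}$, so the $\C^*$-action on $\cU_{i+1}$ reduces to the action on the first coordinate $[C]\in\cM$.
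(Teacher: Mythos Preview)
Your proof is correct and follows essentially the same approach as the paper: push a general $\C^*$-orbit in $\cU_{i+1}=q^{-1}(Y_{i+1})$ down to $\cM$ via $p$, using Proposition~\ref{prop:ufixed}(i) to identify the sink and source. Your version is more careful than the paper's (you justify explicitly why $p(u)$ is not fixed), and you have in fact corrected an apparent off-by-one slip in the paper's proof, which works with $\cU_i$ rather than $\cU_{i+1}$.
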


\begin{proof} We consider the action on $\cU_{i}$, which, by Proposition \ref{prop:ufixed}, has sink $\cU_{i}^-$ and source $\cU_{i}^+$. In particular, the image via $p$ of a general orbit is an orbit with sink in $\cM_i$ and source in $\cM_{i+1}$.
 \end{proof}

\begin{lemma}\label{lem:orbm2} For $0 \le i < j \le r-1$,  if there exists an orbit in $\cM$ with sink in $\cM_i$ and source in $\cM_j$, then there exists an orbit in $X$ with sink in $Y_i$ and source in $Y_{j+1}$.
\end{lemma}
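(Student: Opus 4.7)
The plan is to lift the given orbit in $\cM$ to a $\C^*$-invariant curve in $X$ by exploiting Construction \ref{const:Sc}. Let $[C] \in \cM$ be a non-fixed point whose orbit under $\C^*$ has sink $[C_-] \in \cM_i$ and source $[C_+] \in \cM_j$. By the definition $\cM_k = p(\cU_k^+) = p(\cU_{k+1}^-)$, the curves $C_-$ and $C_+$ are bridges linking $Y_i,Y_{i+1}$ and $Y_j, Y_{j+1}$, respectively. Applying Construction \ref{const:Sc} produces a rational ruled surface $S_C \simeq \mathbb{F}_e$ together with a $\C^*$-equivariant ruling $p:S_C \to \mathbb{P}^1$ (whose invariant fibers $F_-, F_+$ lie over $[C_-],[C_+]$) and an evaluation $\overline{q}:S_C \to X$. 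Since both limit curves are bridges, Proposition \ref{prop:degC1} forces $e \in \{0,2\}$, and the four fixed points of the $\C^*$-action on $S_C$ are the sinks $s_\pm$ and sources $n_\pm$ of $F_\pm$, which by $\C^*$-equivariance of $\overline{q}$ map to points of $Y_i, Y_{i+1}, Y_j, Y_{j+1}$, respectively.

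The central observation is that $s_-$ is the sink and $n_+$ is the source of the $\C^*$-action on the whole surface $S_C$. Indeed, the tangent space $T_{S_C,s_-}$ splits into a base direction and a fiber direction: the first has negative weight because $[C_-]$ is the sink of the base $\mathbb{P}^1$, and the second has negative weight because $s_-$ is the sink of $F_-$. Both weights being negative forces $s_-$ to be the sink of $S_C$; dually, $n_+$ is the source, while $n_-$ and $s_+$ are saddle points. By Proposition \ref{prop:equalU} the action on $\cU$ is equalized, and since $\overline{\nu}:S_C \to \cU$ is finite (it is a fiber product involving the finite normalization $\nu: \mathbb{P}^1 \to \overline{\C^*[C]} \subset \cM$), the induced action on $S_C$ is equalized as well. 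Therefore, for a general $x \in S_C$, the orbit closure $\overline{\C^*x}$ is a smooth rational curve with $x_-=s_-$ and $x_+=n_+$.

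Choosing $x$ outside the at-most-one orbit closure contracted by $\overline{q}$ (Remark \ref{rem:1contr}) ensures that $\overline{q}|_{\overline{\C^*x}}$ is finite, so its image is an irreducible $\C^*$-invariant curve in $X$ passing through $\overline{q}(s_-)\in Y_i$ and $\overline{q}(n_+)\in Y_{j+1}$. Since $i\neq j+1$, these points lie in distinct fixed components, hence the image is not contained in $X^{\C^*}$ and must be the closure of the one-dimensional orbit $\overline{\C^*\overline{q}(x)} \subset X$; by $\C^*$-equivariance of $\overline{q}$, this orbit has sink in $Y_i$ and source in $Y_{j+1}$, as required. The one technical point I expect to be the main obstacle is the tangent-weight identification of $s_-$ and $n_+$ as the sink and source of $S_C$; once that is in place, the conclusion follows immediately by transporting a general orbit of $S_C$ through $\overline{q}$.
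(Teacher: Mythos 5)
Your proposal is correct and follows essentially the same route as the paper: both build the ruled surface $S_C$ from the orbit $\C^*[C]$, identify the sink and source of the induced action on $S_C$ as the fixed points mapping to $Y_i$ and $Y_{j+1}$, and push forward a general orbit of $S_C$ via $\overline q$. The only (immaterial) difference is that you locate the sink and source of $S_C$ by computing tangent weights at the four fixed points, whereas the paper deduces this from the equivariance of $q$ and the ordering $Y_i \orbits Y_{i+1} \preceq Y_j \orbits Y_{j+1}$; both arguments are valid, so the step you flag as the main obstacle is in fact unproblematic.
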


\begin{proof}
Denote by $\C^*[C]$ the orbit with sink in $\cM_i$ and source in $\cM_j$.
We consider the rational ruled surface $S_C$ as in Construction \ref{const:Sc}. The limit curves $C_-\in\cM_i$, $C_+\in\cM_j$ are bridges whose sinks and sources we denote, respectively, by $y_i,y_{i+1}\in C_-$,  $y_j,y_{j+1}\in C_+$. 

The action on $S_C$ has four fixed points, which are the sinks and sources of the action on the two $\C^*$-invariant fibers of $p$ over the points $[C_\pm]$.
Since $q$ is equivariant and $Y_i \orbits Y_{i+1} \preceq Y_j \orbits Y_{j+1}$ the sink and the source of the action on $S_C$ are the fixed points which are mapped to $y_i$ and $y_{j+1}$, respectively.

The image via $q$ of a general orbit of the action on $S_C$ is then an orbit in $X$ with sink $y_i\in Y_i$ and source  $y_{j+1}\in Y_{j+1}$.
 \end{proof}

\begin{lemma}\label{lem:orbm3} For $0 \le i +1<j \le r-1$
if  there exists an orbit  with sink  in $Y_{i+1}$ and source in $Y_j$, then there exists an orbit in $\cM$ with sink in $\cM_{i}$ and source in $\cM_{j}$.
\end{lemma}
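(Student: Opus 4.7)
The plan is to lift the given orbit $\C^*x$ in $X$ (with sink $y_{i+1}\in Y_{i+1}$ and source $y_j\in Y_j$) to an orbit in $\cU$ whose sink lies in $\cU_{i+1}^-$ and whose source lies in $\cU_j^+$; applying $p$ then furnishes an orbit in $\cM$ with sink in $\cM_i=p(\cU_{i+1}^-)$ and source in $\cM_j=p(\cU_j^+)$ by Corollary \ref{cor:sections}, provided the projection is not contracted.

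Consider the $\C^*$-invariant closed subvariety $W:=q^{-1}(\overline{\C^*x})\subset\cU$. Equalization of the action on $X$ makes $\overline{\C^*x}\simeq\P^1$ smooth, and smoothness of $q$ (with connected fibres) then makes $W$ a smooth connected projective $\C^*$-variety whose fixed locus is contained in $\cU_{y_{i+1}}\sqcup\cU_{y_j}$. The key claim is that the sink of the induced action on $W$ coincides with $\cU_{i+1}^-\cap\cU_{y_{i+1}}$ and the source with $\cU_j^+\cap\cU_{y_j}$; both are nonempty and irreducible by Corollary \ref{cor:unif}, since $Y_{i+1}$ and $Y_j$ are inner. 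Recall that for a connected smooth projective $\C^*$-variety the sink is the unique fixed component at whose points the tangent space carries no weight $+1$. At $\tilde w\in\cU_{i+1}^-\cap\cU_{y_{i+1}}$ one has $T_{W,\tilde w}=dq^{-1}(T_{\overline{\C^*x},y_{i+1}})$; the line $T_{\overline{\C^*x},y_{i+1}}$ has weight $-1$ (being the normal direction at the sink of $\overline{\C^*x}$), and the kernel $\ker dq_{\tilde w}=T_{\cU_{y_{i+1}},\tilde w}$ carries only weights $0$ and $-1$, because $\tilde w$ lies in the sink of the action restricted to $\cU_{y_{i+1}}$ (apply Proposition \ref{prop:ufixed}(i) to $\cU_{i+1}$ and restrict to the $\C^*$-invariant fibre over $y_{i+1}$). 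Decomposing any $v\in T_{W,\tilde w}$ by weights and applying the $\C^*$-equivariance of $dq$, one finds that the weight-$(+1)$ component $v_{+1}$ must land in $\ker dq_{\tilde w}$ and hence vanishes. Thus $\tilde w$ belongs to the sink of $W$; as $\cU_{i+1}^-\cap\cU_{y_{i+1}}$ is irreducible and disjoint from the other fixed subvarieties of $W$, it is an irreducible component of $W^{\C^*}$ and therefore coincides with the sink. The source is handled by the symmetric argument at $y_j$.

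Pick now $w$ in the dense Bia{\l}ynicki-Birula cell $W^-(\cU_{i+1}^-\cap\cU_{y_{i+1}})\cap W^+(\cU_j^+\cap\cU_{y_j})$, so that $w_-\in\cU_{i+1}^-$ and $w_+\in\cU_j^+$. Then $p(w_-)\in\cM_i$ and $p(w_+)\in\cM_j$. To conclude that $p$ does not contract $\C^*w$, note that any nontrivial $\C^*$-orbit contained in a fibre $p^{-1}([C])$ forces $[C]$ to be a fixed bridge, say linking $Y_a$ to $Y_{a+1}$, and such an orbit flows from a point of $\cU_a^+$ to a point of $\cU_{a+1}^-$—a sink/source pattern incompatible with $(\cU_{i+1}^-,\cU_j^+)$. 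Hence $p(\overline{\C^*w})$ is genuinely one-dimensional and yields the required orbit in $\cM$ from $\cM_i$ to $\cM_j$.

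The main technical hurdle is the sink/source identification of the induced action on $W$, carried out by the weight analysis above; it relies crucially on the equalization of the action on $\cU$ (Proposition \ref{prop:equalU}) and on the fibrewise structure of $\cU_Y$ established in Proposition \ref{prop:ufixed} and Corollary \ref{cor:unif}.
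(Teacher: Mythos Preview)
Your proof is correct and follows the same strategy as the paper: pull back the orbit closure $\Gamma=\overline{\C^*x}$ to $\cU_\Gamma=q^{-1}(\Gamma)$, identify the sink and source of the induced action there as lying in $\cU_{i+1}^-$ and $\cU_j^+$, and push a general orbit forward via $p$. The paper states the sink/source identification by simply listing the order $\cU^-_{i+1}\orbits\cU_{i+1}^0\orbits\cU_{i+1}^+\orbits\cU^-_j\orbits\cU_j^0\orbits\cU_j^+$ (relying implicitly on Lemma~\ref{lem:Uorbits}), whereas you spell it out via a tangent-weight computation; both arguments are valid and yield the same conclusion.

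One minor imprecision: Corollary~\ref{cor:unif} gives irreducibility of $\cU_{Y}^\pm$, not of the fibre $\cU_Y^\pm\cap\cU_y$. The latter is nonetheless irreducible because it is the sink (resp.\ source) of the connected smooth $\C^*$-variety $\cU_y$; your own weight argument in fact forces this, since every component of $\cU_{i+1}^-\cap\cU_{y_{i+1}}$ would have $\nu^+=0$ and the sink of a connected smooth projective $\C^*$-variety is unique. Also, your final check that $p$ does not contract $\C^*w$ can be shortened: since $p(w_-)\in\cM_i$ and $p(w_+)\in\cM_j$ lie in disjoint fixed loci of $\cM$ (they parametrize bridges between different pairs of components), $p(w_-)\neq p(w_+)$ immediately. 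The paper leaves this step implicit.
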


\begin{proof}
Let $\Gamma$ be the closure of an orbit  with sink in $Y_{i+1}$ and source in $Y_j$, and consider the $\C^*$-invariant subvariety $\cU_\Gamma:=q^{-1}(\Gamma)$. The fixed point components of the action on $\cU_\Gamma$ are the intersections with $\cU_\Gamma$ of the fixed point components 
$\cU^-_{i+1}\orbits \cU_{i+1}^0 \orbits \cU_{i+1}^+\orbits \cU^-_{j}\orbits \cU_{j}^0 \orbits \cU_{j}^+$.

In particular the sink of the action on $\cU_\Gamma$ is contained in $\cU^-_{i+1}=\cU^+_{i}$, and the source is contained in $\cU^+_{j}$.
The image in $\cM$ of a general orbit in $\cU_\Gamma$ is then an orbit  with sink in $\cM_{i}$ and source in $\cM_{j}$.
 \end{proof}

\begin{proof}[Proof of Theorem \ref{thm:complete}]
Let us consider the following two statements, depending on a positive integer $k$:
\begin{itemize}
\item[{[$S_X(k)$]}] There exist an orbit in $X$ with sink in $Y_i$ and source $Y_{i+k}$, for every $i,i+k\in\{0,\dots, r\}$.
\item[{[$S_\cM(k)$]}] There exist an orbit in $\cM$ with sink in $\cM_i$ and source $\cM_{i+k}$, for every $i,i+k\in\{0,\dots, r-1\}$.
\end{itemize}
The statements $S_X(1)$ and $S_\cM(1)$ follow from Theorem \ref{thm:actionx} and Lemma \ref{lem:orbm1}, respectively.
From Lemma \ref{lem:orbm2} we infer that  $S_\cM(k)\Rightarrow S_X(k+1)$,  while Lemma \ref{lem:orbm3} provides  $S_X(k)\Rightarrow S_\cM(k+1)$. Recursively, we get that both statements hold for every possible $k$.
 \end{proof}

\subsection{Geometry of the BB-cells}

In this section we will discuss some special properties of the plus and minus BB-decompositions in the assumptions of Setup \ref{set:basicZ}. We start by showing that in this situation such decompositions are stratifications, i.e., that the closure of a BB-cell is a union of BB-cells:

\begin{proposition}\label{prop:BB1} 
In the situation of Setup \ref{set:basicZ}, we have that:  
\[B_j^- \subset B_i^- \ \text{if}\ i < j \quad \text{and}\quad B_j^+ \subset B_i^+\ \text{if} \ j < i.\] 
In particular, the plus and minus BB-decompositions are stratifications.
\end{proposition}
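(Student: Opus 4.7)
The plan is to prove $B_j^- \subset B_{j-1}^-$ for each $j = 1, \ldots, r$; the corresponding inclusion for the $+$-cells then follows by replacing the $\C^*$-action with its inverse. The first observation is that $Y_j \subset B_{j-1}^-$: given $y \in Y_j$, Theorem \ref{thm:actionx}(3) provides a bridge $C \in \cM_{j-1}$ with source $y$. The orbit part of $C$ lies in $X^-(Y_{j-1})$ and accumulates to $y$ as $t \to 0$, so $y \in \overline{X^-(Y_{j-1})} = B_{j-1}^-$. It thus suffices to show $X^-(Y_j) \setminus Y_j \subset B_{j-1}^-$.

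For $x \in X^-(Y_j) \setminus Y_j$, the aim is to produce a non-fixed curve $[C] \in \cM$ through $x$ whose $\cM$-sink $[C]_-$ lies in $\cM_{j-1}$; then Construction \ref{const:Sc} yields a closed irreducible $\C^*$-invariant ruled surface $\overline{q}(S_C) \subset X$ containing $x$, with $C_- = \overline{q}(F_-)$ a bridge linking $Y_{j-1}$ and $Y_j$. The $\C^*$-fixed points of $\overline{q}(S_C)$ are images of the four corners of $S_C \simeq \F_e$, lying on the limit curves $C_\pm$. Using Proposition \ref{prop:degC1} together with $x_- \in Y_j$ and $x_+ \in Y_k$ for some $k > j$, I verify that if $C_+$ is a bridge linking $Y_i$ and $Y_{i+1}$ then $k \in \{i, i+1\}$ forces $i \geq j$, and similarly if $C_+$ is a fixed curve it is contained in some $Y_k$ with $k > j$. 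Hence the minimum $L$-weight among the fixed points of $\overline{q}(S_C)$ is $(j-1)d$, attained precisely at the sink of $C_- \in Y_{j-1}$. Consequently, the restricted $\C^*$-action on $\overline{q}(S_C)$ has sink in $Y_{j-1}$, so its generic point lies in $X^-(Y_{j-1})$; by irreducibility $\overline{q}(S_C) \subset B_{j-1}^-$, and in particular $x \in B_{j-1}^-$.

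The main obstacle is the existence of such a $[C]$. The plan is to use the BB-minus cell $\cU^-(\cU_j^-) \subset \cU$. A point $([C], y)$ in this open cell has $\cU$-sink in $\cU_j^-$, so $[C]_-$ is a bridge in $\cM_{j-1}$ and $y_- \in Y_j$, giving $q(\cU^-(\cU_j^-)) \subset X^-(Y_j)$; moreover $[C]$ is non-fixed whenever $y \ne y_-$, for otherwise $C = C_0$ would force $x \in Y_j$. To see that this image is dense in $X^-(Y_j)$, fix any $y_0 \in Y_j$ and a bridge $C_0 \in \cM_{j-1}$ with source $y_0$, so $([C_0], y_0) \in \cU_j^-$. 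Since $q$ is smooth and $\C^*$-equivariant, its differential $dq$ is surjective and respects the weight decomposition; restricting to the weight-$\le 0$ subspaces of $T_\cU$ and $T_X$ (which, by equalization and the BB theorem, are the tangent spaces of $\cU^-(\cU_j^-)$ at $([C_0], y_0)$ and of $X^-(Y_j)$ at $y_0$) yields a surjection. Hence $q|_{\cU^-(\cU_j^-)}$ is submersive at $([C_0], y_0)$, and since $X^-(Y_j)$ is irreducible (an affine bundle over the irreducible $Y_j$) the image contains an open dense subset $U \subset X^-(Y_j)$. For every $x \in U$ the previous paragraph gives $x \in B_{j-1}^-$, and taking closures yields $B_j^- = \overline{X^-(Y_j)} = \overline{U} \subset B_{j-1}^-$, as required.
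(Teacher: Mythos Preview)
Your proof is correct and follows a genuinely different path from the paper's argument. Both proofs ultimately land on the same key step: for a suitable curve $[C]\in\cM$ one constructs the ruled surface $S_C$ of Construction~\ref{const:Sc}, identifies its sink as lying in $Y_{j-1}$, and concludes $\overline q(S_C)\subset B_{j-1}^-$. The difference lies in how that curve is produced and how $C_+$ is controlled.

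The paper first invokes Theorem~\ref{thm:complete} (completeness of the orbit graph) to know that the general orbit with sink in $Y_{j}$ has source in $Y_r$. Pulling such an orbit $\Gamma$ back to $\cU_\Gamma=q^{-1}(\Gamma)$ and taking a general orbit there produces a curve $[C]$ whose limit $C_+$ is pinned down: it is either contained in $Y_r$ or a bridge to $Y_r$. This makes the identification of the sink of $S_C$ immediate.

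You bypass Theorem~\ref{thm:complete} entirely. Instead you work with the BB-cell $\cU^-(\cU_j^-)\subset\cU$ and use the $\C^*$-equivariance and smoothness of $q$ to show, via a clean weight-space argument on the differential, that $q$ maps this cell submersively to $X^-(Y_j)$; hence a dense open set of points $x\in X^-(Y_j)$ admits a curve $[C]$ with $[C]_-\in\cM_{j-1}$. You do not know where $C_+$ lands, but you observe that since $x_+\in Y_k$ with $k>j$ must lie on $C_+$, all fixed points on $C_+$ have weight at least $jd$, which is enough to conclude that the minimum weight on $\overline q(S_C)$ is $(j-1)d$.

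What this buys: your argument is logically independent of the orbit-graph completeness theorem, relying only on Theorem~\ref{thm:actionx} and the basic structure of the induced action on $\cU$; it could therefore be placed earlier in the exposition. The paper's route, once Theorem~\ref{thm:complete} is in hand, gives more precise information about $C_+$ and avoids the tangent-space computation. Two minor remarks on your write-up: the first paragraph (showing $Y_j\subset B_{j-1}^-$) is in fact subsumed by the closure step at the end, and the phrase ``$C=C_0$ would force $x\in Y_j$'' would read more cleanly if you spelled out that a fixed $[C]$ would itself be a bridge in $\cM_{j-1}$, whose non-fixed points have sink in $Y_{j-1}$ rather than $Y_j$.
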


\begin{proof}
It is clearly enough to prove only the first part of the statement, and to prove it only for $j=i+1$. 
In order to do so, we will show that a general orbit with sink in $Y_{i+1}$ is contained in  $B_i^-$. Note that Theorem \ref{thm:complete} implies that there exists an orbit with sink in $Y_{i+1}$ and source in $Y_r$, from which we may conclude that $B_{i+1}^-$ intersects the open subset $X^+(Y_r)$, i.e., that the general orbit with sink in $Y_{i+1}$ has source in $Y_r$. Let $\Gamma$ be the closure of such an orbit, and set $\cU_\Gamma:=q^{-1}(\Gamma)$.

The sink and the source of the action on $\cU_\Gamma$ are, respectively, $\cU_{i+1}^- \cap \cU_\Gamma$ and, either  $\cU_r^0 \cap \cU_\Gamma$, or $\cU_r^- \cap \cU_\Gamma$ if the source is zero dimensional. Let $\Gamma'$ be a general orbit in $\cU_\Gamma$ and set $\Gamma'':=p(\Gamma')$, which is the closure of an orbit $\C^*[C]$ in $\cM$.

Consider the ruled surface $S_{C}$ as in Construction \ref{const:Sc}. The limit curve $C_-$ is a bridge connecting $Y_i$ and $Y_{i+1}$, while the limit curve $C_+$ is either a curve contained in $Y_r$, or a bridge connecting $Y_{r-1}$ and $Y_{r}$ (if $Y_r$ is an isolated point). In all cases, the sink of the action on $S_C$ is a point mapped in $X$ to a point of $Y_i$, and the source is  mapped to $Y_r$. Hence, a general orbit in $S_C$ is mapped to an orbit contained in $B^-(Y_i)$, and therefore $q(S_C) \subset B^-(Y_i)$; since $\Gamma = q(\Gamma') \subset q(S_C)$, the proof is finished.
 \end{proof}

We will now prove that, for every $i = 0, \dots, r-1$, the intersection $B_i^- \cap B_{i+1}^+$ is proper. These intersections can be regarded as generalizations of the {\em Richardson varieties}, which are defined as intersections of Schubert and opposite Schubert varieties in rational homogeneous varieties. In order to do so we will use the following statement on the existence of $\C^*$-invariant smoothings of $\C^*$-invariant $1$-cycles:

\begin{lemma}\label{lem:deform+bridge} Let $C$ be a bridge with sink $y \in Y_i$ and source $y' \in Y_{i+1}$, and let $C'$ be the closure of an orbit with sink $y'$ and source in $Y_r$. Then the cycle $C+C'$ is the limit of closures of orbits with sink $y$ and source belonging to $Y_r$. 
\end{lemma}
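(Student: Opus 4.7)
The strategy is a Bend-and-Break-style degeneration argument. Closures of $1$-dimensional orbits with sink $y$ are in bijection with points of $\P:=\P(N^-(Y_i)_y)$; denote by $\Gamma_\ell$ the orbit closure corresponding to $\ell\in\P$, and by $\sigma\colon\P\to X^{\C^*}$ the morphism sending $\ell$ to the source of $\Gamma_\ell$. By Proposition \ref{prop:BB1} the subset $U:=\sigma^{-1}(Y_r)\subset\P$ is open, and by Theorem \ref{thm:complete} it is non-empty. The tangent direction $\ell_C:=T_{C,y}\in\P$ of the bridge $C$ lies in $\overline U\setminus U$, since $\sigma(\ell_C)=y'\in Y_{i+1}$.

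I would then choose a smooth curve $\Sigma\subset\P$ through $\ell_C$ with $\Sigma\cap U$ dense in $\Sigma$, and take the flat limit in $\Chow(X)$ of the family $\{[\Gamma_\ell]\}_{\ell\in \Sigma\cap U}$ as $\ell\to \ell_C$. Since $L\cdot\Gamma_\ell=(r-i)d$ for $\ell\in U$ while $\Gamma_{\ell_C}=C$ has $L$-degree $d$, the flat limit has $L$-degree $(r-i)d$ and contains $C$ set-theoretically, so it has the form $C+T$ with $T$ an effective $\C^*$-invariant $1$-cycle of $L$-degree $(r-i-1)d$. By $\C^*$-invariance, $T$ meets $C$ only at fixed points of $C$; since the generic $\Gamma_\ell$ meets $X^{\C^*}$ only at $y$ and at $\sigma(\ell)\in Y_r$, the tail $T$ is attached to $C$ at $y'$, the only fixed point of $C$ other than $y$. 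Applying Proposition \ref{prop:bound} locally at $y'$, the tail $T$ is a non-negative integral combination of closures of orbits emanating from $y'$.

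The final step is the identification $T=C'$, achieved by selecting the tangent direction of $\Sigma$ at $\ell_C$ appropriately. Via the $\C^*$-equivariant Grothendieck splitting of the normal bundle $N_{C/X}$ along $C$, there is a natural correspondence between tangent directions at $\ell_C$ to $\P$ (that is, elements of $N^-(Y_i)_y/\ell_C$) and directions in $N^-(Y_{i+1})_{y'}$ given by transport along $C$ through the weight-$(-1)$ subbundle of $N_{C/X}$; under this correspondence, the direction $T_{C',y'}\in N^-(Y_{i+1})_{y'}$ lifts to a specific tangent direction in $T_{\ell_C}\P$. Choosing $\Sigma$ tangent to this direction at $\ell_C$ forces the limiting tail $T$ to be the unique orbit closure with sink $y'$ and tangent direction $T_{C',y'}$, that is, $C'$. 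The main obstacle will be making this last step rigorous: showing that the limiting tail $T$ is uniquely determined by the tangent direction of $\Sigma$ at $\ell_C$ (not just its numerical class), and that picking this direction to be $T_{C',y'}$ produces exactly $C'$ as the tail. This requires a careful local deformation-theoretic analysis of the family $\{\Gamma_\ell\}_{\ell\in\Sigma}$ near $\ell_C$, exploiting the $\C^*$-equivariant structure of $N_{C/X}$ and the fact that an orbit with sink $y'$ is uniquely determined by its tangent direction at $y'$.
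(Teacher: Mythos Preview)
Your approach is genuinely different from the paper's, and the gap you identify in the final step is real and not easily closed along the lines you sketch.

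The paper's argument exploits the datum you never use: that $C$ belongs to the family $\cM$. It works in the universal family rather than in $X$. Set $\cU':=q^{-1}(C')$; the induced $\C^*$-action on $\cU'$ has sink contained in $\cU_{i+1}^-$ and source in $\cU_r^0$ (or $\cU_r^-$). The point $([C],y')\in\cU_{i+1}^-\cap\cU'$ is a specific sink point, and a general orbit in $\cU'$ converging to it projects via $p$ to an orbit $\C^*[\Gamma]$ in $\cM$ whose sink is $[C]$. One then builds the ruled surface $S_\Gamma$ as in Construction \ref{const:Sc}: its two invariant fibres map to $C=\Gamma_-$ and to $\Gamma_+$ (a curve meeting $Y_r$), and it carries a $\C^*$-invariant section $\Gamma'$ mapping onto $C'$. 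On $S_\Gamma$ the degeneration is explicit: $\Gamma_-+\Gamma'$ is visibly a limit of closures of general orbits, and their $q$-images are orbits with sink $y$ and source in $Y_r$. No normal-bundle transport or tail identification is needed.

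Your proposed fix, by contrast, requires showing that the flat-limit tail $T$ is \emph{irreducible} (a priori it could split as a chain of orbit closures linking $y'$ to $Y_r$ through intermediate components) and that it is determined to first order by the tangent direction of $\Sigma$ at $\ell_C$. Neither claim follows from what you have written. The ``correspondence'' you invoke between $N^-(Y_i)_y/\ell_C$ and $N^-(Y_{i+1})_{y'}$ via a weight-$(-1)$ subbundle of $N_{C/X}$ is not a priori well-defined: the $\C^*$-weights on $N_{C/X}$ need not be concentrated in a single value, and even when a subbundle of the right rank exists, matching it to the BB-cell directions at the two ends and then to the limiting tail requires a local deformation-theoretic statement you have not supplied. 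Proposition \ref{prop:bound} also does not help here: it concerns numerical classes, not the actual support of $T$. The paper's route sidesteps all of this by producing the smoothing inside a concrete $\P^1$-bundle over $\P^1$.
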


\begin{proof}
Set $\cU':=q^{-1}(C')$. The sink and the source of the action on $\cU'$ are, respectively, $\cU_{i+1}^- \cap \cU'$ and  $\cU_r^0 \cap \cU'$ (or $\cU_r^- \cap \cU'$ if the source is zero dimensional). 
Consider a general orbit in $\cU'$ with sink in $([C],y')$; its image in $\cM$ via $p$ is the closure of an orbit  $\C^*[\Gamma]$.

Consider the ruled surface $S_{\Gamma}$, defined as in Construction \ref{const:Sc}, and the limit curves $\Gamma_\mp$. There exists a ($\C^*$-invariant) section $\Gamma'$ of $p$ such that $q(\Gamma')=C'$,  $q(\Gamma_-)=C$, and $q(\Gamma_+)$ is either a bridge connecting $Y_{r-1}$ and $Y_{r}$, or a curve contained in $Y_r$.  

In the first case $\Gamma'$ connects the source of $\Gamma_-$ with the source of $\Gamma_+$ and we are in case 0b) of Proposition \ref{prop:degC1}; in the second  $\Gamma'$ connects the source of $\Gamma_-$ with  a point  of $\Gamma_+$, which is fixed, and we are in case 1) of Proposition \ref{prop:degC1}.
In both cases the cycle $\Gamma_-+ \Gamma'$ is a limit of closure of general orbits in $S_\Gamma$. These orbits are mapped to orbits with sink $y \in Y_i$ and source belonging to $Y_r$. 
 \end{proof}

\begin{proposition}\label{prop:BB2} For every $i=0, \dots, r-1$ the intersection $B_i^- \cap B_{i+1}^+$ is proper, i.e., $\codim (B_i^- \cap B_{i+1}^+)=\codim B_i^- + \codim B_{i+1}^+ = \nu^+_i + \nu^-_{i+1}$.
\end{proposition}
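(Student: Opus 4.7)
The lower bound $\dim(B_i^- \cap B_{i+1}^+) \geq \dim X - \nu^+_i - \nu^-_{i+1}$ for each irreducible component is a standard consequence of excess intersection on the smooth ambient variety $X$, so I focus on the matching upper bound. My plan is to parametrize $B_i^- \cap B_{i+1}^+$ by the universal family of bridges joining $Y_i$ and $Y_{i+1}$ and to close the argument with a transversality check.

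Set $\mathcal{B}_i := p^{-1}(\cM_i) \subset \cU$, a $\P^1$-bundle over the fixed component $\cM_i \subset \cM$ parametrizing bridges from $Y_i$ to $Y_{i+1}$. Its image under $q$ lies in $B_i^- \cap B_{i+1}^+$, and Theorem~\ref{thm:actionx}(3)-(4) guarantees $Y_i \cup Y_{i+1} \subset q(\mathcal{B}_i)$. Since $\cM$ is smooth (beauty), so is its fixed component $\cM_i$, with tangent space at a bridge $[C]$ given by $H^0(N_{C/X})^{\C^*}$. I would decompose $N_{C/X} \simeq \bigoplus_j \cO_{\P^1}(a_j)$ $\C^*$-equivariantly; nefness of $T_X|_C$ from beauty forces $a_j \geq 0$, while equalization confines the weights at the sink $s \in Y_i$ and source $u \in Y_{i+1}$ of $C$ to $\{-1,0,+1\}$. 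The BB decomposition of $T_X|_s$, together with the fact that $T_C|_s$ occupies a weight-$(-1)$ direction inside $N^-(Y_i)_s$, yields on $N_{C/X}|_s$ the weight distribution of $\dim Y_i$ zeros, $\nu^+_i$ ones and $\nu^-_i - 1$ minus-ones; dually on $N_{C/X}|_u$ one finds $\dim Y_{i+1}$ zeros, $\nu^+_{i+1} - 1$ ones and $\nu^-_{i+1}$ minus-ones. A summand $\cO(a_j)$ with weights $(w_s,w_u)$ contributes one $\C^*$-invariant section exactly when $w_s \leq 0 \leq w_u$; a direct bookkeeping over the admissible weight pairs gives
\[ \dim H^0(N_{C/X})^{\C^*} = \dim Y_i + \nu^-_i - \nu^-_{i+1} - 1 = \dim X - \nu^+_i - \nu^-_{i+1} - 1, \]
so $\dim \mathcal{B}_i = \dim X - \nu^+_i - \nu^-_{i+1}$. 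Any bridge of $\cM_i$ through a non-fixed point $x$ must contain the orbit $\C^* x$ and thus coincide with $\overline{\C^* x}$, so $q|_{\mathcal{B}_i}$ is birational onto its image, and $q(\mathcal{B}_i)$ is an irreducible closed subvariety of $B_i^- \cap B_{i+1}^+$ of the expected dimension.

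To upgrade this to a bound on the whole intersection I would verify transversality at a non-fixed $x \in q(\mathcal{B}_i)$ lying on a bridge $C \in \cM_i$: via the $\C^*$-equivariant trivialization of $T_X|_C$ along $\C^* x$, $T_x X^-(Y_i)$ corresponds to the span of weight-$\leq 0$ summands at $s$ and $T_x X^+(Y_{i+1})$ to the span of weight-$\geq 0$ summands at $u$; the only summand that could fail both conditions would have weights $(+1,-1)$, hence degree $-2$, which nefness excludes. Thus $T_x X^-(Y_i) + T_x X^+(Y_{i+1}) = T_x X$, the scheme-theoretic intersection is smooth of codimension $\nu^+_i + \nu^-_{i+1}$ at $x$, and the irreducible component of $B_i^- \cap B_{i+1}^+$ through $x$ must coincide with $q(\mathcal{B}_i)$.

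The main obstacle, and technically most delicate step, is to exclude possible additional irreducible components of $B_i^- \cap B_{i+1}^+$ entirely avoiding the transverse locus: such a component would consist of non-fixed points whose orbit closure is a smooth rational curve of $L$-degree $d$ from $Y_i$ to $Y_{i+1}$ not belonging to $\cM$. I would attempt to exclude these via Lemma~\ref{lem:deform+bridge}: completing such an exotic orbit closure with an orbit from its source in $Y_{i+1}$ to $Y_r$ produces a reducible cycle which, by the lemma applied to nearby honest bridges of $\cM$, lies in the closure of the family of long single orbits with sink in $Y_i$; translating this degeneration back to $X$ should force the exotic point into $\overline{q(\mathcal{B}_i) \setminus (Y_i \cup Y_{i+1})}$, yielding $B_i^- \cap B_{i+1}^+ = q(\mathcal{B}_i)$ and the claimed codimension equality.
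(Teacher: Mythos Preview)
Your route to the dimension of the bridge locus is genuinely different from the paper's. You compute $\dim\cM_i$ intrinsically, identifying $T_{[C]}\cM_i$ with $H^0(N_{C/X})^{\C^*}$ and counting invariant sections through the $\C^*$-equivariant splitting of $N_{C/X}$; the nefness of $T_X|_C$ (beauty) then rules out the weight pair $(+1,-1)$ and makes the inclusion--exclusion go through. The paper instead runs a Chow--quotient argument: it assembles bridges $C\in I$ into chains $C'+C+C''$ with $C'$ linking $Y_0,Y_i$ and $C''$ linking $Y_{i+1},Y_r$, observes that the parameter space $Z$ has dimension $\dim I+\nu_i^++\nu_{i+1}^--2$, and uses Lemma~\ref{lem:deform+bridge} to place $Z$ inside the intersection of two distinct divisors $E_i,E_{i+1}$ of the $(n-1)$-dimensional Chow quotient $\CX$, forcing $\dim Z\le n-3$. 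Both arguments output the same bound $\dim I+1\le n-\nu_i^+-\nu_{i+1}^-$; yours is more elementary and, as a bonus, actually proves that $X^-(Y_i)$ and $X^+(Y_{i+1})$ meet transversally along the bridge locus, which the paper's argument does not see.

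The gap is exactly the one you isolate in your last paragraph, and your proposed fix does not close it. Using Proposition~\ref{prop:BB1} one has $B_i^-\cap B_{i+1}^+=Y_i\cup Y_{i+1}\cup\big(X^-(Y_i)\cap X^+(Y_{i+1})\big)$, and the open stratum $X^-(Y_i)\cap X^+(Y_{i+1})$ meets every irreducible component; so an extra component would be swept out by orbit closures (of $L$-degree $d$) linking $Y_i$ and $Y_{i+1}$ that do not belong to $\cM$. Your appeal to Lemma~\ref{lem:deform+bridge} runs the wrong way: that lemma smooths $C+C''$ only when $C$ is a \emph{bridge} (its proof goes through $\cU'=q^{-1}(C')$ and Construction~\ref{const:Sc}, both of which live over $\cM$), so it says nothing about an exotic $\widetilde C\notin\cM$; and even if one had such a smoothing, it is unclear how it would force the points of $\widetilde C$ into $\overline{q(\mathcal B_i)}$. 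It is worth noting that the paper does not argue this point either: it simply asserts at the outset that ``$B$ is the locus of the set of bridges between $Y_i$ and $Y_{i+1}$'' and ``$\dim B=\dim I+1$'' with $I=\cU_i^+$, thereby identifying the full orbit locus with the bridge locus. So on this step you and the paper are in the same position, except that you have correctly named it as the crux rather than passed over it.
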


\begin{proof}
Let us consider again the Chow quotient $\CX$ of $X$ by the $\C^*$-action (cf. \cite[Section 4]{WORS6}), which is an $(n-1)$-dimensional projective variety.

Set $B:=B_i^- \cap B_{i+1}^+$. Since, by Theorem \ref{thm:actionx}, there exists a bridge between $Y_i$ and $Y_{i+1}$, then $B$ intersects the open sets $X^-(Y_i)\subset B_i^-$, $X^+(Y_{i+1})\subset B_{i+1}^+$, and we may claim that the $\C^*$-orbit of a general point in $B$ has its extremal fixed point components in $Y_i,Y_{i+1}$. We then conclude that $B$ is the locus of the set of bridges between $Y_i$ and $Y_{i+1}$, that we denoted by $I$; note that 
$I=\cU_i^+=\cU_{i+1}^-$. In particular  $B$ is irreducible and $\dim B=\dim I+1$.

We denote by $e_{i}:I\to Y_i$ (resp. $e_{i+1}:I\to Y_{i+1}$) the map assigning to every element of $I$ its sink (resp. its source). Let us denote by $V_i^+\subset \P(N^+(Y_i))$, (resp. $V_{i+1}^-\subset \P(N^-(Y_{i+1}))$) the open set parametrizing orbits linking $Y_0$ and $Y_i$ (resp. $Y_{i+1}$ and $Y_r$), which is non-empty by Theorem \ref{thm:complete}. Consider the fiber product:
\[
Z:=\big(V_i^+\times_{Y_i} I\big)\times_I \big(V_{i+1}^-\times_{Y_{i+1}} I\big),
\]
which parametrizes all the connected $\C^*$-invariant $1$-cycles of the form $C' + C + C''$, where $C'$ is the closure of an orbit linking $Y_0$, $Y_i$, $C\in I$ and $C''$ is the closure of an orbit linking $Y_{i+1}$, $Y_r$. The variety $Z$ is smooth, irreducible, of dimension:
\[\dim Z= \dim I+\nu^+_i-1+\nu^-_{i+1}-1,\]
and it admits an injective morphism into $\CX$. 

On the other hand, we may consider in $\CX$ the subvariety parametrizing connected $\C^*$-invariant cycles of the form $C_1+C_2$ with $C_1$ linking $Y_0$ and $Y_i$, $C_2$ linking $Y_{i}$ and $Y_r$. It is isomorphic to an open subset of $\P(N^-(Y_i))\times_{Y_i}\P(N^+(Y_i))$. We denote its closure by $E_i\subset \CX$, and define analogously $E_{i+1}$. By construction $E_i$, $E_{i+1}$ are irreducible divisors in $\CX$. 

The proof will be finished by showing that $Z\subset E_i\cap E_{i+1}$; in fact, since $E_i$, $E_{i+1}$ are obviously different, their intersection in $\CX$ is a union of subvarieties of dimension $n-3$, hence we will have:
$$\dim I+\nu^+_i+\nu^-_{i+1}-2=\dim Z \leq n-3,$$
from which we get
$$\dim B=\dim I+1 \leq n-3-\big(\nu^+_i+\nu^-_{i+1}-2\big)+1=n - \nu^+_i-\nu^-_{i+1}.$$
Then we conclude by noting that the inequality $\codim B\leq\codim B_i^- + \codim B_{i+1}^+$ follows from the smoothness of $X$. 

In order to prove that $Z\subset E_i \cap E_{i+1}$ we have to show that every cycle of $Z$ belongs to $E_i\cap E_{i+1}$. Such cycles are of the form $C'+C+C''\in Z$, where $C',C,C''$ are $\C^*$-invariant irreducible curves linking, respectively, $Y_0$ to $Y_i$, $Y_i$ to $Y_{i+1}$,  and $Y_{i+1}$ to $Y_r$.
 It is then enough to notice that, by Lemma \ref{lem:deform+bridge}, $C+ C''$ deforms algebraically to a $\C^*$-invariant curve linking $Y_0$ to $Y_{i+1}$ and, analogously, that $C'+ C$ deforms algebraically to a $\C^*$-invariant curve linking $Y_i$ to $Y_r$. 
 \end{proof}

An important outcome of the previous statement, that we will use later on, is that it allows us to relate the ranks $\nu^\pm_i$ to the dimensions of the extremal fixed point components of the action on the varieties $\cU_y$, $y\in X^{\C^*}$; more concretely, with the integers:
$$u_i^\pm:= \dim \cU_i^\pm \cap \cU_y,  \mbox{ $y\in Y_i$ general}.$$

\begin{corollary}\label{cor:ump} Let $Y_i \in \cY$ be a fixed point component and $y$ a general point of $Y_i$;  set 
 $\nu^+_{-1}=\nu^-_{r+1}=0$.  Then, for $i=0,\dots, r$, 
\[u_i^+= \nu^-_i-\nu^-_{i+1}-1, \qquad
u_i^-= \nu^+_i-\nu^+_{i-1}-1.\]
In particular, for every $i= 0, \dots, r-1$,
\[ u_i^+ + u_{i+1}^-  = \dim \cU_y.\] 
\end{corollary}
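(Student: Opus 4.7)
The plan is to identify $\cU_i^\pm\cap\cU_y$ with the general fibers of the sink and source maps from the fixed components $\cM_i$ of $\cM$, and then extract $\dim\cM_i$ from Proposition~\ref{prop:BB2}. Concretely, I would first observe that, by Corollary~\ref{cor:sections}, $p$ restricts to bijective morphisms $\cU_i^+\to\cM_i$ and $\cU_{i+1}^-\to\cM_i$; since $\cM$ and $\cU$ are smooth (Remark~\ref{rem:beauty}), so are their fixed loci, and these bijections are therefore isomorphisms. Under them the evaluation $q$ becomes the sink map $s_i\colon\cM_i\to Y_i$ and the source map $t_i\colon\cM_i\to Y_{i+1}$, which are surjective by Theorem~\ref{thm:actionx}(3)--(4). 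Using irreducibility of $\cM_i$ (Proposition~\ref{prop:ufixed}), the generic fibers have the expected dimension, so $u_i^+=\dim\cM_i-\dim Y_i$ and $u_{i+1}^-=\dim\cM_i-\dim Y_{i+1}$.

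Second, I would extract $\dim\cM_i$ from Proposition~\ref{prop:BB2}: its proof identifies $B_i^-\cap B_{i+1}^+$ with the $\C^*$-invariant subvariety swept by the family of bridges $I=\cU_i^+\simeq\cM_i$, and shows that $\dim(B_i^-\cap B_{i+1}^+)=\dim\cM_i+1$. Combined with the codimension statement $\codim(B_i^-\cap B_{i+1}^+)=\nu_i^++\nu_{i+1}^-$, this yields $\dim\cM_i=\dim X-\nu_i^+-\nu_{i+1}^--1$. Substituting the identity $\dim X=\dim Y_j+\nu_j^++\nu_j^-$ coming from \eqref{eq:dectang}--\eqref{eq:decnorm} (with $j=i$ for $u_i^+$ and $j=i+1$ for $u_{i+1}^-$) and then reindexing gives the stated equalities for $u_i^+$ and $u_i^-$.

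For the identity $u_i^++u_{i+1}^-=\dim\cU_y$, adding the two formulas produces
\[
u_i^++u_{i+1}^-=(\nu_{i+1}^+-\nu_{i+1}^-)-(\nu_i^+-\nu_i^-)-2,
\]
which by Corollary~\ref{cor:can} equals $-K_X\cdot C-2$ for any bridge $C$ linking $Y_i$ and $Y_{i+1}$. On the other hand, because $q$ is smooth (Remark~\ref{rem:beauty}), $\dim\cU_y$ does not depend on $y$ and coincides with $\dim\cM_x$ for a general $x\in X$ by Remark~\ref{rem:MU}, which is $-K_X\cdot C-2$ by \eqref{eq:dimMx}.

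The main obstacle is not conceptual but a matter of reading: Proposition~\ref{prop:BB2} is phrased only as a codimension estimate, so one has to reach inside its proof to harvest the complementary identity $\dim(B_i^-\cap B_{i+1}^+)=\dim\cM_i+1$. A secondary point is the treatment of the edge cases $i=0$ and $i=r$; with the conventions $\nu_{-1}^+=\nu_{r+1}^-=0$ the formulas correctly output $u_0^-=u_r^+=-1$, consistent with $\cU_0^-=\cU_r^+=\emptyset$, so no separate argument is needed there.
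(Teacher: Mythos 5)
Your proof is correct and follows essentially the same route as the paper: both compute $\dim(B_i^-\cap B_{i+1}^+)$ in two ways --- once as $\dim \cU_i^+ +1$ (equivalently $\dim\cM_i+1$, harvested from the proof of Proposition~\ref{prop:BB2}) and once via the fibration $\cU_i^\pm\to Y_i$ together with the codimension formula of Proposition~\ref{prop:BB2} --- and then obtain the final identity by combining the two expressions with (\ref{eq:can}) and (\ref{eq:dimMx}). Your routing through $\cM_i\cong\cU_i^+$ and your explicit treatment of the degenerate cases $u_0^-=u_r^+=-1$ are only cosmetic differences.
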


\begin{proof} We start by observing that $B_i^- \cap B_{i+1}^+ = q(p^{-1}(p(\cU_i^+)))$, hence 
\[
\dim  (B_i^- \cap B_{i+1}^+) = \dim \cU_i^+ +1.
\]
Moreover, since $u_i^+$ is the dimension of the general  fiber of $\cU_i^+ \to Y_i$, we can also write 
\[\dim  (B_i^- \cap B_{i+1}^+) = \dim Y_i + u_i^+ -1= n -\nu_i^+-\nu_i^- +u_i^+-1.\]
On the other hand, by Proposition \ref{prop:BB2}, we have $\dim (B_i^- \cap B_{i+1}^+) = n - \nu_i^+ - \nu_{i+1}^-$.
Combining the two equalities we get that $u_i^+= \nu^-_i-\nu^-_{i+1}-1$; a similar argument provides  $u_i^-= \nu^+_i-\nu^+_{i-1}-1$.
Combining these two formulae with (\ref{eq:can}, \ref{eq:dimMx}), we obtain the last stated equality.
 \end{proof}

\section{Actions with an isolated extremal fixed point}\label{sec:isolated}

The goal of this section will be the proof of Theorem \ref{thm:CP1}. We will work in the following setup:

\begin{setup}\label{setup:isolated} $(X,L)$ is a polarized pair, where $X$ is a Fano manifold with $\Pic(X)=\Z L$, 
admitting a beautiful family of rational curves $\cM$, with evaluation morphism $q: \cU \to X$ of relative dimension $m$.
On $X$ there is an equalized $\C^*$-action with an isolated extremal fixed point, which,
without loss of generality, we will assume to be the sink;
we will denote by $r$ the criticality of the $\C^*$-action on $X$, and by $Y_0=\{y_0\},Y_1,\dots, Y_r$ the fixed point components.  
\end{setup}

In the next section we will present some consequences of the assumption that an extremal fixed point of the action is isolated.  
We will then divide the proof in two cases, according to the possible values of the relative Picard number $\rho(\cU/X)$.

\subsection{Proof of Theorem \ref{thm:CP1}: toolbox}\label{ssec:relpic} 

We start by noting that, since the action has isolated sink, we may compute the degree of the ample generator of $\Pic(X)$ on the curves of the family $\cM$: 

\begin{lemma}\label{lem:deg1}
In the situation of Setup \ref{setup:isolated}, the $L$-degree of the curves of $\cM$ is equal to one.
\end{lemma}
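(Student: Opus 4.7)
The plan is to reduce $d$ to an intersection number on a distinguished divisor built from the BB decomposition, and then to use $\Pic(X) = \Z L$.

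First, I will show that the curves of $\cM$ through $y_0$ are bridges from $y_0$ to $Y_1$. Proposition \ref{prop:ufixed}(e1) applied to the isolated sink $Y_0$ gives that the induced $\C^*$-action on $\cU_{y_0} = q^{-1}(y_0)$ is trivial, so every curve $C \in \cM$ through $y_0$ is $\C^*$-invariant; since $Y_0 = \{y_0\}$ is a single point, $C$ cannot lie in any fixed component and must therefore be a bridge. Combining Lemma \ref{lem:AMvsFM} with Theorem \ref{thm:actionx} and $L \cdot C = d$, such a bridge necessarily joins $y_0$ to $Y_1$; in particular $\cU_0^+ = \cU_{y_0}$ has dimension $m$.

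Next, I will apply Corollary \ref{cor:ump} at $i = 0$. With $\nu^-_0 = n$, the formula $u_0^+ = \nu^-_0 - \nu^-_1 - 1$ gives $\nu^-_1 = n - m - 1$, while $u_0^+ + u_1^- = m$ forces $u_1^- = 0$, hence $\nu^+_1 = 1$. Then $B_1^- := \overline{X^-(Y_1)}$ has dimension $\dim Y_1 + \nu^-_1 = n - 1$, so it is an irreducible effective Cartier divisor in the smooth variety $X$; since $\Pic(X) = \Z L$, its class equals $bL$ for some positive integer $b$.

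Finally, I will compute $B_1^- \cdot C$ for a bridge $C$ joining $y_0$ to a point $y_1 \in Y_1$. By Proposition \ref{prop:BB1} the minus-BB decomposition is a stratification, so $B_1^-$ is the disjoint union of the cells $X^-(Y_j)$ with $j \geq 1$; in particular $B_1^-$ is disjoint from the big open cell $X^-(Y_0)$, which contains both $y_0$ and the interior of the orbit of $C$. Hence $C \cap B_1^- = \{y_1\}$ set-theoretically, and at $y_1$ the intersection is transverse because $T_{C, y_1} \subset N^+(Y_1)_{y_1}$ while $T_{B_1^-, y_1} = T_{Y_1, y_1} \oplus N^-(Y_1)_{y_1}$. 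Therefore $B_1^- \cdot C = 1$, and the identity $bd = bL \cdot C = 1$ with $b, d$ positive integers forces $d = 1$. I expect the main delicate point to be verifying that $y_0$ and the open orbit portion of $C$ lie outside $B_1^-$, which is precisely what the stratification property of Proposition \ref{prop:BB1} guarantees.
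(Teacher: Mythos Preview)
Your proof is correct and follows essentially the same strategy as the paper: show that $B^-(Y_1)$ is an effective divisor, hence a positive multiple of $L$, and compute that a bridge $C$ from $y_0$ to $Y_1$ meets it transversally in a single point, forcing $bd=1$. The only notable difference is in how you justify $\nu^+(Y_1)=1$: the paper cites the Bend-and-Break argument of \cite[Lemma~4.5]{WORS3} directly, whereas you derive it internally from Corollary~\ref{cor:ump} applied at $i=0$ together with $u_0^+=m$; similarly, you invoke Proposition~\ref{prop:BB1} to pin down the set-theoretic intersection $C\cap B_1^-=\{y_1\}$, while the paper argues this more informally from the local structure of $X^-(Y_1)$ near $Y_1$. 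Both routes are valid and yield the same transversality computation.
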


\begin{proof}
Let $B^-(Y_1)$ be the closure of $X^-(Y_1)$; the fact that $\nu^+(Y_1)=1$ (see \cite[Lemma~4.5]{WORS3}), implies that $B^-(Y_1)\subset X$ is a divisor. This divisor is locally isomorphic to $N^-(Y_1)$ in a neighbourhood of $Y_1$; in particular it is smooth along $Y_1$. On the other hand, a bridge $C$ of the family $\cM$ linking $Y_1$ with $y_0$ is smooth at its source $y_1$ (Lemma \ref{lem:AMvsFM}), and meets $B^-(Y_1)$ transversally at $y_1$. This implies that $B^-(Y_1) \cdot C =1$, and so $L\cdot C=1$, since $L$ is the ample generator of $\Pic(X)$.
 \end{proof}

\begin{corollary}
The weights of the $\C^*$-action on $X$ are $0,1, \dots, r$; in particular the action has bandwidth $r$ with respect to $L$.
\end{corollary}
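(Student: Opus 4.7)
The plan is to combine the two ingredients that have just been established. Theorem \ref{thm:actionx}(1) tells us that, under the hypotheses of Setup \ref{set:basicZ} (which are in force here, since Setup \ref{setup:isolated} is a specialization), the weights of the $\C^*$-action on $(X,L)$ are precisely the consecutive multiples
\[
0 < d < 2d < \dots < (r-1)d < rd,
\]
where $d$ is the $L$-degree of any curve of the beautiful family $\cM$. Lemma \ref{lem:deg1} has just shown that, under the extra assumption that the sink is isolated, this degree satisfies $d=1$. Substituting $d=1$ into the chain of weights gives the sequence $0,1,2,\dots,r$, which is exactly the first assertion.

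For the bandwidth statement I would simply recall from Section \ref{ssec:polar} that the bandwidth $\delta$ is defined as the maximal weight $a_r$, so that $\delta = rd = r$. No further argument is needed and there is no real obstacle: the corollary is essentially a bookkeeping consequence of Theorem \ref{thm:actionx} together with the normalization $d=1$ provided by Lemma \ref{lem:deg1}.
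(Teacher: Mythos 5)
Your proof is correct and is exactly the paper's argument: the paper also deduces the corollary by combining Lemma \ref{lem:deg1} ($d=1$) with part (1) of Theorem \ref{thm:actionx}, and reads off the bandwidth as the top weight $a_r=rd=r$. Nothing is missing.
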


\begin{proof} It follows from Lemma \ref{lem:deg1} and Part (1) of Theorem \ref{thm:actionx}.
 \end{proof}

Since the $L$-degree of curves of $\cM$ is one, then $$\cE:=p_*q^*L$$ is a rank two vector bundle, satisfying that $$\cU\simeq\P_\cM(\cE),\qquad \cO_{\P_\cM(\cE)}(1)\simeq q^*L.$$ By the ampleness of $L$ we have that $\cE$ is nef, and so is the line bundle $\lb \in \Pic(\cU)$ defined as 
\begin{equation}\label{eq:D}
\lb:=p^*\det(\cE).
\end{equation}

In the next statement we will identify the variety $\cU_{y_0}$ with a fixed point component of $X$. In fact, we will show that the curves of the family $\cM_{y_0}$ are uniquely determined by their sources, which are the points of $Y_1$; we also know that the curves of $\cM_{y_0}$ are smooth at $y_0$, then the universal family over $\cM_{y_0}$ contains two sections: one is $\cU_{y_0}$ and the other is isomorphic to $Y_1$.

\begin{lemma}\label{lem:det2}
In the situation of Setup \ref{setup:isolated}, we have isomorphisms:
$$
\cU_{y_0}\simeq Y_1,\qquad p^*\cE_{|\cU_{y_0}}\simeq\cO_{\cU_{y_0}}\oplus q^*L_{|\cU_{y_0}}.
$$
In particular, we have that $L_{|Y_1}\simeq D_{|\cU_{y_0}}$.
\end{lemma}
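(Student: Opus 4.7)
The plan is, first, to identify the elements of $\cU_{y_0}$ with bridges from $y_0$ to $Y_1$, producing an explicit morphism $\alpha\colon\cU_{y_0}\to Y_1$ sending $([C],y_0)$ to the source of $C$; and second, to split the rank two bundle $p^*\cE|_{\cU_{y_0}}$ using the two $\C^*$-fixed sections of $p$ over $\cM_{y_0}$, namely the sink section at $y_0$ and the source section in $Y_1$.

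By Proposition \ref{prop:ufixed}(e1) the $\C^*$-action on $\cU_{y_0}$ is trivial, so every element $([C],y_0)\in\cU_{y_0}$ is represented by a $\C^*$-invariant curve $C$ passing through the isolated fixed point $y_0$. Such a $C$ cannot lie in the fixed locus, so it is a bridge; it has $L$-degree one (Lemma \ref{lem:deg1}), hence by Lemma \ref{lem:AMvsFM} its sink is $y_0$ and its source belongs to the unique fixed component of weight one, namely $Y_1$ (Theorem \ref{thm:actionx}(2)). This defines $\alpha$ as a morphism. Surjectivity is Theorem \ref{thm:actionx}(3) applied at $Y_1$, and injectivity follows from $\nu^+(Y_1)=1$ (already used in the proof of Lemma \ref{lem:deg1}): the orbits with source at a given $y\in Y_1$ are parametrised by the single point $\PP(N^+(Y_1)|_y)$, so the bridge is determined by $y$. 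Both $\cU_{y_0}$ and $Y_1$ are smooth and irreducible of the same dimension $m$, where smoothness comes from the smoothness of $q$ and of fixed components, irreducibility from Remark \ref{rem:MU} and Theorem \ref{thm:actionx}(2), and the dimension equality $\dim Y_1=n-1-\nu^-(Y_1)=u_0^+=m$ from Corollary \ref{cor:ump} combined with $\nu^+(Y_1)=1$. Therefore $\alpha$ is a proper bijective morphism between smooth irreducible varieties of the same dimension, hence an isomorphism by Zariski's Main Theorem.

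For the splitting, note that $p|_{\cU_{y_0}}\colon\cU_{y_0}\to\cM_{y_0}$ is itself an isomorphism, because every bridge in $\cM_{y_0}$ is smooth at $y_0$ and thus meets $\cU_{y_0}$ at a single point. It therefore suffices to split $\cE|_{\cM_{y_0}}$. The universal $\PP^1$-bundle $p^{-1}(\cM_{y_0})\to\cM_{y_0}$ admits two disjoint $\C^*$-fixed sections: the sink section $s_0$, for which $q\circ s_0$ is constant at $y_0$, and the source section $s_1$, for which $q\circ s_1$ realises the isomorphism $\cM_{y_0}\simeq Y_1$ constructed above. Evaluating $q^*L$ at these sections yields
\[
\cE|_{\cM_{y_0}}=p_*(q^*L)|_{\cM_{y_0}}\lra s_0^*(q^*L)\oplus s_1^*(q^*L),
\]
which is a fibrewise isomorphism, since on every fiber $C\simeq\PP^1$ the evaluation of $H^0(\PP^1,\cO(1))$ at two distinct points is an isomorphism onto $L|_{y_0}\oplus L|_{y_C}$. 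The summands on the right are $s_0^*(q^*L)\simeq\cO_{\cM_{y_0}}$ (by the normalisation $\mu_L(Y_0)=0$) and $s_1^*(q^*L)\simeq L|_{Y_1}$ (under $\cM_{y_0}\simeq Y_1$); pulling back through $p|_{\cU_{y_0}}$ yields the stated splitting of $p^*\cE|_{\cU_{y_0}}$, and the ``in particular'' assertion is immediate by taking determinants.

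The delicate step is really just the passage from bijection to isomorphism for $\alpha$; this requires the dimension count from Corollary \ref{cor:ump} together with Zariski's Main Theorem. The splitting itself is formal once we have the identification $\cU_{y_0}\simeq\cM_{y_0}\simeq Y_1$ and two disjoint $\C^*$-fixed sections of the $\PP^1$-bundle supplied by the equalised action, combined with the elementary fact that two-point evaluation on $\cO_{\PP^1}(1)$ is an isomorphism.
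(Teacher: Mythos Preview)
Your proof is correct and follows the same overall strategy as the paper's: identify $\cU_{y_0}$ with $Y_1$ via the source map, then split $p^*\cE|_{\cU_{y_0}}$ using the sink and source sections of the $\P^1$-bundle over $\cM_{y_0}$.

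The only genuine difference is in how you justify $\cU_{y_0}\simeq Y_1$. You construct the source map $\alpha$, check bijectivity, then invoke the dimension count from Corollary~\ref{cor:ump} together with Zariski's Main Theorem. The paper instead observes that $\cU_{y_0}=\cU_0^+$ and $\cU_1^-$ are two sections of the same $\P^1$-bundle $p^{-1}(p(\cU_0^+))\to p(\cU_0^+)$ (Corollary~\ref{cor:sections}), hence each is isomorphic to the base via $p$, and then $\cU_1^-\simeq Y_1$ via $q$ because $\nu^+(Y_1)=1$. This avoids both the dimension computation and the appeal to ZMT, and makes transparent that the isomorphism $\cU_{y_0}\simeq Y_1$ factors through $\cM_{y_0}$, which you then re-derive separately for the splitting step. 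Your route works, but the paper's is shorter and more structural.

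One small quibble: the justification ``by the normalisation $\mu_L(Y_0)=0$'' for $s_0^*(q^*L)\simeq\cO_{\cM_{y_0}}$ is beside the point. The bundle is trivial simply because $q\circ s_0$ is a constant map to $y_0$; the weight normalisation plays no role here.
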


\begin{proof}

By Corollary \ref{cor:sections},  $p(\cU^+_{0})=p(\cU^-_1)\subset\cM$. Moreover, the subvarieties $\cU^+_{0},\cU^-_{1}$, of the $\P^1$-bundle $p^{-1}(p(\cU^+_0))\to p(\cU^+_0)$ are sections (they correspond, respectively, to the sinks and the sources of the curves parametrized by $p(\cU^+_0)$), therefore they are isomorphic. 
Since $\nu^+(Y_1)=1$ (see \cite[Lemma~4.5]{WORS3}) we have that $\cU^-_1\simeq Y_1$. On the other hand, by Proposition \ref{prop:ufixed}, part (e1),  the action on $\cU_{0}= \cU_{y_0}$ is trivial, so we conclude that  $\cU_{0}=\cU_{0}^+\simeq \cU_1^-\simeq Y_1$.

Set $\cE_0:=p^*\cE_{|\cU_{y_0}}$. The image $\cM_{y_0}$ of  $\cU_{y_0}$ into $\cM$ via $p$ is the subfamily of $\C^*$-invariant rational curves with sink $y_0$ and source in $Y_1$, and $\P(\cE_0)=\cU\times_{\cM}\cU_{y_0}$ is the universal family of these curves. 

There are two disjoint sections, $s_0:\cU_{y_0}\to \cU_0^+\subset \P(\cE_0)$, $s_1:\cU_{y_0}\to \cU_1^-\subset \P(\cE_0)$, corresponding to the extremal fixed points of the curves, which map via $q$ to the point $y_0$ and to $Y_1$, respectively. In particular, $\cE_0$ splits as 
$$\cE_0\simeq s_0^*q^*L_{|\cU_{y_0}}\oplus s_1^*q^*L_{|\cU_{y_0}}.$$
Since, via the map $q$, $\cU_0^+$ is contracted to $y_0$ and $\cU_1^-$ maps isomorphically to $Y_1$, it follows that $\cE_0\simeq\cO_{\cU_{y_0}}\oplus q^*L_{|\cU_{y_0}}$, from which we finally get that $\det(\cE_0)\simeq q^*L_{|\cU_{y_0}}\simeq L_{|Y_1}$.
 \end{proof}

As a consequence, we may state the following:

\begin{corollary}\label{cor:det3}
Let $C$ be a rational curve contained in $Y_1$, satisfying that $L\cdot C=1$. Then $C$ is a curve of the family $\cM$.
\end{corollary}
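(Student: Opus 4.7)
The plan is to transfer the problem, via the isomorphism $\cU_{y_0}\simeq Y_1$ of Lemma \ref{lem:det2}, to a two-parameter family of rational curves of $L$-degree one in $X$ obtained from a ruled surface over $\cM$, and to conclude by the rigidity of $\cM$ in $\rat^n(X)$. First, let $\widetilde C\subset \cU_{y_0}$ denote the curve corresponding to $C$ under that isomorphism. Using $L_{|Y_1}\simeq D_{|\cU_{y_0}}$ (Lemma \ref{lem:det2}) I would compute
\[
\det(\cE)\cdot p_*\widetilde C\;=\;D\cdot \widetilde C\;=\;L\cdot C\;=\;1,
\]
so $p$ does not contract $\widetilde C$, $\Gamma:=p(\widetilde C)\subset \cM_{y_0}$ is a curve, $p_{|\widetilde C}:\widetilde C\to\Gamma$ is birational, and $\det(\cE)\cdot\Gamma=1$.

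Second, I would pull back the universal $\P^1$-bundle $p:\cU\to\cM$ to the normalization $\widetilde\Gamma\simeq\P^1$ of $\Gamma$, obtaining the ruled surface $\widetilde S:=\P(\cE_{|\widetilde\Gamma})$. Since the action on $\cU_{y_0}$ is trivial by Proposition \ref{prop:ufixed}(e1), the curve $\Gamma$ lies in the fixed locus of the $\C^*$-action on $\cM$, and $\cE_{|\widetilde\Gamma}$ splits into its weight-$0$ and weight-$1$ equivariant subsheaves: the weight-$0$ summand has fiber the constant $L_{|y_0}$ and hence is trivial, whereas the weight-$1$ quotient, whose fiber at $[B]\in\Gamma$ is $L_{|\mathrm{source}(B)}$, identifies with $L_{|C}$ via $\widetilde\Gamma\simeq C$ and thus has degree one. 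Therefore $\cE_{|\widetilde\Gamma}\simeq \cO\oplus\cO(1)$ and $\widetilde S\simeq \mathbb F_1$. The evaluation $q_{|\widetilde S}:\widetilde S\to X$ contracts the negative section to $y_0$ and so factors through the blowdown $\widetilde S\to\P^2$ as a finite morphism $\alpha:\P^2\to X$ with $\alpha^*L\simeq \cO_{\P^2}(1)$.

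Third, every line $\ell\subset\P^2$ maps via $\alpha$ birationally onto a rational curve of $L$-degree one in $X$: lines through the blowdown image of the negative section recover the bridges parametrized by $\Gamma$ (which lie in $\cM$), whereas the section $\widetilde C_1\subset\widetilde S\cap\cU_1^-$ mapping to $C$ becomes, after the blowdown, a line $\ell_C\subset\P^2$ with $\alpha(\ell_C)=C$. The universal line over $(\P^2)^*$ composed with $\alpha$ yields a family of rational curves of $L$-degree one in $X$, hence a morphism $\psi:(\P^2)^*\to\rat^n(X)$. The image $\psi((\P^2)^*)$ is irreducible, meets $\cM$ (at the bridges), and, since by Remark \ref{rem:beauty} $\cM$ is a connected component of $\rat^n(X)$, the entire image lies in $\cM$; in particular $[C]=\psi(\ell_C)\in\cM$.

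The main obstacle I anticipate is the careful verification of the $\C^*$-equivariant splitting $\cE_{|\widetilde\Gamma}\simeq\cO\oplus\cO(1)$ together with the identification of the two extremal $\C^*$-fixed sections of $\widetilde S\simeq\mathbb F_1$ as the ones mapping to $y_0$ and to $C$ respectively; once these are in hand, the rest of the argument runs formally, using only the universal property of $\rat^n(X)$ and the fact that $\cM$ is a connected component of $\rat^n(X)$.
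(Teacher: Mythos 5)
Your proof is correct and follows essentially the same route as the paper's: both construct the $\mathbb F_1$-surface swept out by the bridges joining $y_0$ to the points of $C$, identify it as $\P(\cO_{\P^1}\oplus\cO_{\P^1}(1))$ via the splitting of Lemma \ref{lem:det2}, blow down the section contracted to $y_0$ to obtain a map from $\P^2$, and conclude that $C$ deforms inside this family to a bridge, hence lies in $\cM$ because $\cM$ is a smooth irreducible component of $\rat^n(X)$. The only cosmetic difference is that you re-derive the splitting of $\cE$ over $\Gamma$ by a $\C^*$-weight argument, whereas the paper simply pulls back the global splitting $p^*\cE_{|\cU_{y_0}}\simeq\cO_{\cU_{y_0}}\oplus q^*L_{|\cU_{y_0}}$ already established in Lemma \ref{lem:det2} along the normalization of $C$.
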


\begin{proof}
Let $f:\P^1\to C$ be the normalization of $C$. We consider the family of closures of orbits linking the curve $C$ with $y_0$, which, as in Lemma \ref{lem:det2}, has universal family $\P(f^*(p^*\cE)_{|\cU_{y_0}}))\simeq \P(\cO_{\P^1}\oplus \cO_{\P^1}(1))$. Let us denote by $q':\P(\cO_{\P^1}\oplus \cO_{\P^1}(1))\to X$ its evaluation map. It contracts the section corresponding to $\cO_{\P^1}\oplus \cO_{\P^1}(1)\to \cO_{\P^1}$ to the point $y_0$, and sends the section corresponding to $\cO_{\P^1}\oplus \cO_{\P^1}(1)\to \cO_{\P^1}(1)$ onto the curve $C$. In particular the Stein factorization of $q'$ is the contraction of $\P(\cO_{\P^1}\oplus \cO_{\P^1}(1))$ to $\P^2$. Then $C$ is algebraically equivalent to the image of a line in $\P^2$ passing by $y_0$, that is to an element of $\cM$. Since $\cM$ is a smooth irreducible component of $\rat^n(X)$, then $C$ is a curve of the family $\cM$, as well.
 \end{proof}

\begin{lemma}\label{lem:det}
The line bundle $\lb$ has positive degree on rational curves contracted by $q$. Moreover, if $y \in X^{\C^*}\setminus (Y_0\cup Y_r)$ is an inner fixed point, then the degree of $\lb$ on the closure of a general orbit of the induced action on $\cU_y$ is two.
\end{lemma}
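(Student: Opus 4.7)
The plan is to reduce both assertions to intersection-theoretic computations on the ruled surfaces $S_C$ of Construction~\ref{const:Sc}. Since $\cU\simeq \P_\cM(\cE)$ with $\cO_{\P_\cM(\cE)}(1)\simeq q^*L$, ampleness of $L$ forces $\cE$ to be nef, and for any curve $\Gamma\subset \cU$ the projection formula gives $\lb\cdot \Gamma=\det(\cE)\cdot p_*\Gamma$.

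For the first statement I would proceed as follows. Let $\Gamma\subset \cU$ be an irreducible rational curve contracted by $q$. The fibers of $p$ are sent by $q$ to curves of $\cM$, hence are not contracted, so $p(\Gamma)$ is an irreducible curve $C\subset \cM$. Pulling back along the normalization $\nu:\tilde C\to C$ produces a ruled surface $S_C=\P(\cE|_{\tilde C})\to \tilde C$ with an induced evaluation $\bar q:S_C\to X$, and the task reduces to showing $\deg\det(\cE|_{\tilde C})>0$. Nefness yields the non-strict inequality; if equality held then, since a nef rank-$2$ bundle of degree $0$ on $\P^1$ is trivial, one would have $\cE|_{\tilde C}\simeq \cO^{\oplus 2}$, so $\xi:=\bar q^*L$ would become a ruling class on $S_C\simeq \P^1\times \P^1$. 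Every section of $p$ would then have $\xi$-degree zero, hence be $\bar q$-contracted; since the curves $\bar q(\text{fiber of }p)$ are not all equal in $\cM$, $\bar q(S_C)$ is a surface and we would obtain a one-parameter family of distinct $\bar q$-contracted curves, contradicting Remark~\ref{rem:1contr}.

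For the second statement, let $y\in Y_i$ be an inner fixed point and let $\bar\Gamma$ be the closure of a general $\C^*$-orbit in $\cU_y$. By Proposition~\ref{prop:ufixed}(i), Lemma~\ref{lem:Uorbits} and Corollary~\ref{cor:unif}, the sink and source of $\bar\Gamma$ lie in $\cU_i^-\cap \cU_y$ and $\cU_i^+\cap \cU_y$, so their $p$-images are bridges linking $Y_{i-1}$ with $Y_i$ and $Y_i$ with $Y_{i+1}$ respectively. Consequently $C:=p(\bar\Gamma)$ is the closure of an orbit in $\cM$ whose two limit curves are both bridges, placing us in case 0b) or case 2) of Proposition~\ref{prop:degC1}. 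Equalization of the $\C^*$-actions on $\bar\Gamma$ and on $C$ (Propositions~\ref{prop:equalM}, \ref{prop:equalU}), combined with the $\C^*$-equivariance of $p|_{\bar\Gamma}$ between two smooth rational orbit closures, forces this map to be an isomorphism, so $\bar\Gamma$ is a section of $S_C\to \tilde C$ and it is $\bar q$-contracted since $\bar q(\bar\Gamma)=y$.

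Writing $\cE|_{\tilde C}\simeq \cO(a)\oplus \cO(a+e)$ with $a\geq 0$ (nefness) and $e\in\{0,2\}$, the $\bar q$-contracted section $\bar\Gamma$ corresponds to a surjection $\cE|_{\tilde C}\twoheadrightarrow \cO$; such a surjection exists on $\P^1$ only when $a=0$ (for degree reasons), forcing $a=0$. In case~0b) ($e=0$) this quotient is not unique and every section of $S_C\simeq \P^1\times \P^1$ becomes $\bar q$-contracted, contradicting Remark~\ref{rem:1contr} exactly as in the first part. Thus we must be in case~2), where $\cE|_{\tilde C}\simeq \cO\oplus \cO(2)$, and consequently $\lb\cdot \bar\Gamma=\deg(\det \cE|_{\tilde C})=2$. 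The main obstacle is the exclusion of case~0b); the single-contraction / Bend-and-Break argument from the first part resolves it.
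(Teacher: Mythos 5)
Your proof is correct and follows essentially the same route as the paper's: both reduce to the ruled surfaces $\P(\nu^*\cE)$ over curves of $\cM$, use nefness of $\cE$ together with the impossibility of a one-parameter family of $q$-contracted sections to get positivity, and identify case 2) of Proposition~\ref{prop:degC1} via the orbit structure of $\cU_y$ (sink in $\cU^-_i$, source in $\cU^+_i$) to get degree two. One cosmetic slip: when $\cE|_{\tilde C}\simeq\cO^{\oplus 2}$ it is not true that \emph{every} section of $p$ has $\xi$-degree zero (a section of bidegree $(1,b)$ has $\xi$-degree $b$), but the one-parameter family of horizontal rulings already supplies the contradiction you need.
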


\begin{proof}
Let $\Gamma$ be a rational curve contracted by $q$,  denote by $\nu:\PP^1 \to p(\Gamma)$ the normalization of $p(\Gamma)$ and consider the Cartesian square
\begin{equation}
\xymatrix{\P(\nu^*\cE) \ar@/^10pt/[rr]^{\overline q}\ar[d]_p \ar[r]_{\overline \nu}& \cU \ar[d]_p \ar[r]_q& X\\
\PP^1 \ar[r]^\nu &\cM}
\end{equation}

By construction the bundle $\nu^*\cE$ is nef, and the surface $\P(\nu^*\cE)$ contains the normalization $\tl{\Gamma}$ of $\Gamma$, which is contracted to a point in $X$. In particular, $\tl{\Gamma}$ is a section of $\P(\nu^*\cE)$,  $\cO_{\P(\nu^*\cE)}(1)\cdot \tl{\Gamma}=0$, and 
$\nu^*\cE$ is of the form $\cO_{\P^1}\oplus \cO_{\P^1}(e)$ for some $e\geq 0$. In the case $e=0$ we would have a curve in $\cM$ whose points parametrize the same curve in $X$, a contradiction. It follows that  $\lb \cdot \tl{\Gamma} = e > 0$. 

For the last claim, we notice that, if $y \in X^{\C^*}\setminus (Y_0\cup Y_r)$ and
 $\Gamma$ is the closure of a general orbit of the induced action on $\cU_y$,  then $e=2$ by Proposition \ref{prop:degC1}.
 \end{proof}

\begin{corollary}\label{cor:unspfam} Let $\Gamma$ be a rational curve in a fiber of $q$ such that $D\cdot \Gamma=1$. Let $V$ be the  component of $\rat^n(\cU)$ containing the point parametrizing $\Gamma$. Then $V$ is an unsplit family. 
\end{corollary}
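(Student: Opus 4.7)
My plan is to show that inside the numerical class $[\Gamma] \in \Nu(\cU)$, every effective $1$-cycle supported on rational curves must in fact be irreducible and reduced; this numerical rigidity will then translate into properness of $V$ by a standard moduli-theoretic argument.

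The key ingredient I would exploit is that $\Gamma$ is contained in a fiber of $q$, so that $q_*[\Gamma] = 0$ in $\Nu(X)$. For any effective $1$-cycle $\Gamma' = \sum_i m_i \Gamma'_i$ numerically equivalent to $\Gamma$ (with the $\Gamma'_i$ irreducible rational curves and $m_i \geq 1$), I would then observe that $\sum_i m_i\, q_*[\Gamma'_i] = 0$; since each $q_*[\Gamma'_i]$ is an effective class on the projective variety $X$, every $q_*[\Gamma'_i]$ must vanish, so every component $\Gamma'_i$ is contracted by $q$. Applying Lemma \ref{lem:det} I would then have $D \cdot \Gamma'_i \geq 1$ for every $i$, whence
\[
1 = D \cdot \Gamma = \sum_i m_i (D \cdot \Gamma'_i) \geq \sum_i m_i,
\]
forcing a single irreducible reduced component.

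To conclude, I would argue that if $V$ failed to be proper, then the valuative criterion applied to the rational map $V \dashrightarrow \Chow(\cU)$ (whose target is proper) would exhibit a reducible rational $1$-cycle in the class $[\Gamma]$ as a limit, contradicting the rigidity just established. The only mildly delicate point is this last step, namely formalising the passage from non-properness of an irreducible component of $\rat^n(\cU)$ to the existence of a genuinely reducible limit cycle in the same numerical class; however, this is a well-known Mori-type argument (see, e.g., \cite{kollar}), so I do not anticipate any real obstacle beyond an appropriate bookkeeping of the Chow-to-\textsc{RatCurves} comparison.
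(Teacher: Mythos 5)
Your proof is correct and follows essentially the same route as the paper: the paper compresses your two steps into a single degree count against the divisor $D+q^*L$, which has positive degree on every rational curve in $\cU$ (by Lemma \ref{lem:det} for $q$-contracted curves and by ampleness of $L$ otherwise), whereas you first push forward by $q$ to see that every component of a degeneration must be $q$-contracted and then apply Lemma \ref{lem:det} alone. The final passage from non-properness of $V$ to the existence of a degenerate rational $1$-cycle in the class of $\Gamma$ is indeed the standard argument and poses no difficulty.
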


\begin{proof}
Assume that $\Gamma$ degenerates as $\Gamma_1 + \Gamma_2$. Then, from $
(D+q^*L)\cdot \Gamma=1$ we get that, for some $i=1,2$, we have $(D+q^*L)\cdot \Gamma_i \le 0$, which is impossible, since $q^*L$ is nef and $D$ has positive degree on rational curves.
 \end{proof}

We can now use the above results to bound the Picard number of  $\cU_y$ for every point  $y$ in $Y_1$.

\begin{lemma}\label{lem:bw2} 
If $y$ is a point  of $Y_1$ then the action on $\cU_y$ has isolated sink. In particular, $\cU_y$ is rationally connected. Moreover $\rho(\cU_y) \le 2$.
\end{lemma}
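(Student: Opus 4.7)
The plan is to prove the three assertions of the lemma in order. For the \emph{isolated sink}, observe that by Proposition~\ref{prop:ufixed} the fixed locus of the $\C^*$-action on $\cU_1 = q^{-1}(Y_1)$ has $\cU_1^-$ as its sink, hence the sink of the action on $\cU_y$ is contained in $\cU_1^- \cap \cU_y$. It then suffices to show that $\cU_1^- \cap \cU_y$ is a single reduced point. This can be read off from Lemma~\ref{lem:det2}: its proof identifies $\cU_{y_0}=\cU_0^+$ with $Y_1$ and, via the $\P^1$-bundle of bridges between $y_0$ and $Y_1$, $\cU_0^+$ with $\cU_1^-$. Composing these identifications with the source map $\cU_1^- \to Y_1$ one obtains the identity on $Y_1$, so that $\cU_1^- \to Y_1$ is an isomorphism and $\cU_1^- \cap \cU_y$ is indeed one reduced point $p_-$ for every $y \in Y_1$. (Alternatively, for generic $y\in Y_1$ the computation $u_1^-=\nu_1^+-\nu_0^+-1=1-0-1=0$ via Corollary~\ref{cor:ump} confirms the dimension count.)

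The \emph{rational connectedness} of $\cU_y$ is then immediate: the Bia{\l}ynicki--Birula cell $\cU_y^-(p_-)$ is globally isomorphic to the affine space $N^-(p_-)=T_{p_-}\cU_y$ (the base being a point), and as the cell of the sink it is open and dense in $\cU_y$. Hence $\cU_y$ is rational, and a fortiori rationally connected.

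For the \emph{Picard number bound}, the strategy is to apply Theorem~\ref{thm:CP1rho2} to $\cU_y$ polarized by the nef line bundle $\lb|_{\cU_y}$, where $\lb=p^*\det\cE$. The action is equalized (restricting the equalized action on $\cU$, cf.\ Proposition~\ref{prop:equalU}) with isolated sink by the previous step; the restriction $\lb|_{\cU_y}$ has positive degree on every rational curve in $\cU_y$, since all such curves are contracted by $q$ (Lemma~\ref{lem:det}); and, whenever $Y_1$ is an inner fixed point component (i.e., $r\geq 2$), the same Lemma~\ref{lem:det} yields $\lb$-degree two on the closure of a general orbit of $\cU_y$. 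If then $\rho(\cU_y)>1$, Theorem~\ref{thm:CP1rho2} forces $\cU_y$ to be either a product of two projective spaces or the blowup of a projective space along a linear subspace, both of Picard number exactly two. The case $r=1$ must be treated separately: by Proposition~\ref{prop:crit1}, $X\simeq \P^{\dim X}$, $\cM$ is the family of lines, and $\cU_y\simeq \P^{\dim X-1}$, whose Picard number is one.

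The main delicate point is precisely this dichotomy between the inner and extremal cases of $Y_1$: only in the former does $\lb|_{\cU_y}$ have degree two on the general orbit (in the latter it drops to one), so Theorem~\ref{thm:CP1rho2} cannot be applied uniformly and the $r=1$ case has to be disposed of via the criticality-one classification.
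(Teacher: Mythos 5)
Your proof is correct and follows essentially the same route as the paper's: the isolated sink comes from $\nu^+(Y_1)=1$ (which you access through Lemma~\ref{lem:det2}, whose proof rests on that same fact), rational connectedness follows from the isolated sink (the paper chains orbit closures, you use density of the affine BB-cell -- both standard), and the Picard bound is obtained by applying Theorem~\ref{thm:CP1rho2} to $(\cU_y,D_{|\cU_y})$ via Lemma~\ref{lem:det}. Your observation that Lemma~\ref{lem:det} gives $D$-degree two on the general orbit only when $Y_1$ is an \emph{inner} component, so that the case $r=1$ requires the separate (trivial) treatment via Proposition~\ref{prop:crit1}, is a legitimate refinement of the paper's argument, which invokes Theorem~\ref{thm:CP1rho2} without comment on this edge case.
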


\begin{proof}  By \cite[Lemma 4.5]{WORS3} we have $\nu^+(Y_1)=1$, hence the sink $\cU_y \cap \cU_1^-$ (which is non-empty by Corollary \ref{cor:unif}) is zero dimensional.  Every point of $\cU_y$ can be connected to the sink  by a chain of closures of orbits, so $\cU_y$ is rationally chain connected, hence rationally connected.

For the second part, note that the restriction to $\cU_y$ of the line bundle $D=p^*(\det(p_*q^*L))$ satisfies, by Lemma \ref{lem:det}, the hypotheses of Theorem \ref{thm:CP1rho2}, which allows us to conclude that $\rho(\cU_y) \le 2$.  
\end{proof}

\begin{corollary}\label{cor:picF}
For every $x\in X$ the variety $\cU_x$ is a smooth rationally connected variety. Moreover, denoting by $\Nu(\cU_x,\cU)$ the image of $i_*:\Nu(\cU_x) \to \Nu(\cU)$, we have:
\[ \rho(\cU_x)= \dim \Nu(\cU_x,\cU).\]
In particular, if $\rho(\cU/X)=1$, then $\rho(\cU_x)=1$ for every $x \in X$.
\end{corollary}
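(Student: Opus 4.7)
The smoothness of $\cU_x$ for every $x\in X$ is immediate, since $q:\cU\to X$ is a smooth morphism by the beauty of $\cM$. For rational connectedness, Lemma \ref{lem:bw2} already gives it when $x\in Y_1$; since $q$ is a smooth proper morphism over the (irreducible, hence connected) base $X$, the deformation-invariance of rational connectedness in smooth proper families (Koll\'ar--Miyaoka--Mori) propagates this property to every fiber.

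To prove the identity $\rho(\cU_x)=\dim \Nu(\cU_x,\cU)$, I observe that it is equivalent to the injectivity of $i_*:\Nu(\cU_x)\to\Nu(\cU)$, which dually amounts to the surjectivity of the restriction $N^1(\cU)_\QQ\to N^1(\cU_x)_\QQ$. I would derive this via Hodge theory. Since $X$ is Fano and every fiber $\cU_x$ is rationally connected, both are simply connected with $H^{p,0}=0$ for all $p\ge 1$; in particular $R^1q_*\QQ=0$. By Deligne's degeneration at $E_2$ of the Leray spectral sequence for the smooth projective morphism $q$, one obtains the short exact sequence
\[
0\to H^2(X,\QQ)\to H^2(\cU,\QQ)\to H^2(\cU_x,\QQ)\to 0,
\]
whose right-hand map is the restriction $i^*$. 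Moreover, by Graber--Harris--Starr, $\cU$ itself is rationally connected (smooth projective with RC fibers over the RC base $X$), so $H^{2,0}(\cU)=0$; hence by the Lefschetz $(1,1)$ theorem, $H^2(\cU,\QQ)=N^1(\cU)_\QQ$, and likewise $H^2(\cU_x,\QQ)=N^1(\cU_x)_\QQ$. Surjectivity of the restriction, and thereby injectivity of $i_*$, follows.

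Finally, if $\rho(\cU/X)=1$, then $\Nu(\cU_x,\cU)$ is contained in $\ker(q_*:\Nu(\cU)\to\Nu(X))$, which has dimension $\rho(\cU/X)=1$; combined with $\rho(\cU_x)\ge 1$ (as $\cU_x$ is a positive-dimensional projective variety), the identity just established forces $\rho(\cU_x)=1$. The main subtle step is the Picard identity: what makes the Hodge-theoretic argument succeed cleanly is that the rational connectedness of both $\cU$ and $\cU_x$ kills $H^{2,0}$ on each side, so that rational Hodge $(1,1)$-classes coincide with all rational degree-$2$ cohomology, which sidesteps the usual obstruction to lifting rational $(1,1)$-classes from the fiber.
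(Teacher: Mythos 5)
Your proof is correct and follows essentially the same route as the paper: rational connectedness of one fiber propagated by deformation invariance, Graber--Harris--Starr for $\cU$, Deligne's theorem (invariant cycles / degeneration of Leray, plus simple connectedness of $X$) to get surjectivity of $H^2(\cU,\QQ)\to H^2(\cU_x,\QQ)$ and hence injectivity of $i_*$, and the bound $\dim \Nu(\cU_x,\cU)\le\rho(\cU/X)$ for the final claim. The paper merely compresses the spectral-sequence computation into a citation of the Invariant Cycle Theorem following \cite[Proposition 3.1]{Kan5}.
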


\begin{proof}
By Lemma \ref{lem:bw2}, the fiber $\cU_y$ of $q$ over a  point of $Y_1$ is rationally connected. Since rational connectedness is an open and closed property on smooth proper families, all the fibers of $q$ are rationally connected. Since $X$ is rationally connected, by \cite[Corollary~1.3]{GHS} also $\cU$ is rationally connected. 
Now we follow the arguments of \cite[Proposition 3.1]{Kan5}: for every $x \in X$ the map 
\[
\Pic(\cU) \otimes \QQ \simeq H^2(\cU,\QQ) \to H^2(\cU_x,\QQ) \simeq \Pic(\cU_x) \otimes \QQ
\]
is surjective by Deligne's Invariant Cycle Theorem. In particular, for every
$x \in X$ we have $\rho(\cU_x)= \dim N_1(\cU_x,\cU)$.
If $\rho(\cU/X)=1$, then $q$ is a Mori contraction of fiber type, hence $\dim N_1(\cU_x,\cU) \le \rho(\cU/X)$, and so $\rho(\cU_x)=1$.
 \end{proof}

\subsection{Proof of Theorem \ref{thm:CP1}: sketch.}\label{sec:sketch}

Let us start by noting that Theorem \ref{thm:CP1} holds, by Proposition \ref{prop:FHnonhomo}, 
in the case in which the bandwidth of the action on $(X,L)$ is equal to two. 
Then we may assume, without loss of generality, that the bandwidth of the action is at least $3$. In this case the proof will follow from the description of the varieties $\cU_x$, for every $x\in X$. We will show that every $\cU_x$ is isomorphic to $\cU_y$, for $y\in Y_1$, and that $\cU_y$   is either one of the rational homogeneous cases of Theorem \ref{thm:rho1} if $\rho(\cU/X)=1$, or  a product of two projective spaces if $\rho(\cU/X)\ge 2$. 

In both cases we know that the variety $\cU_x$ coincides at every point with the family of tangent directions to lines at a point of an irreducible Hermitian symmetric space (see Table \ref{tab:hermite3}). Hence, the statement will follow from Theorem \ref{thm:OSWi}. 

\subsection{Case $\rho(\cU/X)=1$.}\label{ssec:rho1}

By Corollary \ref{cor:picF}, for every $x \in X$ the variety $\cU_x$ is rationally connected of Picard number one, hence a Fano manifold. In particular the divisor $\lb=p^*(\det(p_*q^*L))$, which has positive degree on rational curves in $\cU_x$ by Lemma \ref{lem:det}, must be ample on $\cU_x$. 

Now we consider a point $y\in Y_1$, and claim that the variety $\cU_y$ is a rational homogeneous manifold. As observed in Lemma \ref{lem:bw2} the $\C^*$-action on $(\cU_y,D_{|\cU_y})$ has isolated sink and, by Lemma \ref{lem:det}, it has bandwidth two. 

If the criticality of the action is one (i.e. if the action on $\cU_y$ has no inner fixed point components) then $\cU_y$ is a projective space by Proposition \ref{prop:crit1}. More precisely, the polarized pair $(\cU_y,D_{|\cU_y})$ is isomorphic to $(\P^m,\cO_{\P^m}(2))$.

If the criticality is two, then  $\cU_y$ is one of the varieties appearing in Theorem \ref{thm:rho1}, so we only need to exclude the non-homogeneous cases. 

The morphism $q:\cU \to X$, which is smooth of relative dimension $m$, is the contraction of an extremal ray, generated by the class of a curve of degree one with respect to $\lb$; denote by $k$ the anticanonical degree of this class. By Lemma \ref{lem:det}, if $y$ is an inner fixed point of the action on $X$,  the anticanonical degree of a general orbit of the induced action on $\cU_y$ is $2k$. 
Then, by Corollary \ref{cor:can}, for every inner fixed point component we have:
\[
2k= 2m- (u_i^-+u_i^+).
\]
Summing on the inner fixed point components and using Corollary \ref{cor:ump}
we get 
\[
2k(r-1) = m(r-2)+ u_0^++u_r^-= m(r-1) +u_r^-;
\]
the last equality follows from the fact that the sink is isolated. 
In particular $2k > m$. Then, recalling that we are assuming $r \ge 3$,
\begin{equation}
2 (2k-m) \le (2k-m)(r-1)= u_r^- \le m.
\end{equation}
Let us note that, if equality holds, then $r=3$ and $u_r^- = m$; in particular the action on $\cU_r$ is trivial, hence, by Proposition \ref{prop:ufixed}, $\dim Y_r=0$. This case has been treated in \cite[Theorem 8.10]{WORS1}.
Moreover, if $u_r^- = m-1$, then,  for $x \in Y_r$, the action on $\cU_x$, which has criticality one, has divisorial sink; since $\rho(\cU_x)=1$ the source of the action on $\cU_x$ must be isolated, and $\cU_x$ is a projective space by Proposition \ref{prop:crit1}, a contradiction.
We can then assume $2 (2k-m) < m-1$, that is, $4k \le 3m-2$.
Listing the varieties in Theorem \ref{thm:rho1} (5) and (6), with the corresponding values of $m$ and $k$, we see that this inequality is never satisfied.\par\medskip 
\begin{center}\renewcommand{\arraystretch}{1.1}
\begin{tabular}{|C|C|C|}
\hline
\text{Variety}& m& k \\\hline\hline
\DD_5(5)
\cap H_1&9&7\\\hline
\DD_5(5)
\cap H_1 \cap H_2&8&6 \\\hline
\DD_5(5)
\cap H_1 \cap H_2\cap H_3&7&5\\\hline
\DA_4(2)
\cap H&5&4\\\hline
\DA_4(2)
\cap H_1 \cap H_2&4&3\\\hline
\end{tabular}
\end{center}\par\medskip

We then conclude that $\cU_y$ is a rational homogeneous manifold. Since rational homogeneous manifolds have no non-trivial infinitesimal deformations, it follows that  $\cU_{x}\simeq \cU_y$ for a  general $x\in X$.  Since every possible case for $\cU_y$ that we have is rigid under smooth Fano deformations (cf. \cite[Main Theorem]{HMdef}), it follows that every $\cU_x$ is isomorphic to $\cU_y$, and we may conclude the proof by \cite[Theorem~1.1]{OSWi}.

\subsection{Case $\rho(\cU/X)=2$.}\label{ssec:rho2}

We consider again a point $y\in Y_1$. As observed in Lemma \ref{lem:bw2} the $\C^*$-action on $(\cU_y,D_{|\cU_y})$ has isolated sink and, by Lemma \ref{lem:det}, it has bandwidth two. Then, by Theorem \ref{thm:CP1rho2}, either  
\begin{enumerate}
\item[(P)]  there exists $0<a<m$ such that $\cU_y \simeq \PP^a \times \PP^{m-a}$, or  
\item[(B)]  there exists $0 < a < m$ such that $\cU_y$ is isomorphic to the blowup of $\P^m$ along a linear space of dimension $a-1$.
\end{enumerate}

We will now construct two unsplit families of rational curves on $\cU$, starting from rational curves in $\cU_y$. We denote by $\ell_1$ and $\ell_2$ minimal rational curves in the extremal rays of $\cU_y$. In case (P) they are lines in the factors of $\cU_y$; in case (B) we choose $\ell_1$ in the ray corresponding to the fiber-type contraction $\cU_y \to \PP^{m-a}$ and $\ell_2$ in the ray contracted by the blowup map. Note that, by Corollary \ref{cor:picF}, the classes of the curves $\ell_1$ and $\ell_2$ are numerically independent in $\Nu(\cU)$.

Denote by $V^1$ and $V^2$ the components  of $\rat^n(\cU)$ containing the elements corresponding to $\ell_1$ and $\ell_2$. By Lemma \ref{cor:unspfam}, $V^1$ and $V^2$ are unsplit families. Moreover, since $\ell_1$ is a free rational curve in $\cU_y$ and the normal bundle of $\cU_y$ in $\cU$ is trivial, then $\ell_1$ is a free rational curve in $\cU$; in particular the family $V^1$ is a covering family.

In case (P) a similar argument shows that also $V^2$ is an unsplit covering family.
Using that, by adjunction, $-K_{\cU}|_{\cU_y}=-K_{\cU_y}$, we can compute the anticanonical degree of curves parametrized by $V^1$ and $V^2$; those degrees are, respectively,  $a+1$ and  $m-a+1$. Let $\cU_x$ be any fiber of $q$. Then the curves parametrized by $V^i$ meeting $\cU_x$ are contained in $\cU_x$, so $\cU_x$ admits two unsplit covering families of rational curves such that the sum of their anticanonical degrees is $\dim \cU_x +2$. We conclude that $\cU_x \simeq \P^a \times \P^{m-a}$ by  \cite[Main Theorem]{O}, for every $x\in X$, and 
this implies that $X$ is a rational homogeneous variety  by \cite[Theorem~1.1]{OSWi}.

We will finish the proof by showing that case (B) cannot occur.
We claim first that, in this case,  $q(\loc(V^2))=X$. To prove the claim, since $V^2$ is unsplit, it is enough to check that $q|_{\loc(V^2)}$ has surjective differential at a point of $\ell_2$; we claim first that this is equivalent to saying that the evaluation of global sections
\begin{equation}
H^0({T_\cU}_{|\ell_2})\otimes\cO_{\ell_2} \to {q^*T_X}_{|\ell_2} \label{eq:evaluation}
\end{equation}
is surjective. 
In fact, as a component of $\rat^n(\cU)$, $V^2$ is the normalization of the image of a subscheme of $\Hom(\P^1,\cU)$, that we denote by $\cH_2$. Our claim follows from proving that the composition $\P^1\times \cH_2\to \cU \to X$ has surjective differential at a point $(P,\ell_2)$, for some $P\in\ell_2$ or, equivalently, that the composition 
\[\P^1\times \cH_2\subset \P^1\times \Hom(\P^1,\cU)\to \Hom(\P^1,X) \to X\] 
has surjective differential at $(P,\ell_2)$; this follows from \cite[II~Proposition~3.4]{kollar}.

Let us now prove the surjectivity of the map in (\ref{eq:evaluation}).
Since \[{T_{\cU_y}}_{|\ell_2} \simeq \cO_{\ell_2}(2) \oplus \cO_{\ell_2}(1)^{ m-a-1} \oplus \cO_{\ell_2}^{a-1} \oplus \cO_{\ell_2}(-1),\] and $q^*{T_X}_{|\ell_2}$ is trivial, then
we have a commutative diagram with exact rows of the form:

$$
\xymatrix{
0\ar[r]&\HH^0({T_{\cU_y}}_{|\ell_2})\otimes\cO_{\ell_2}\ar[r]\ar[d]&\HH^0({T_{\cU}}_{|\ell_2})\otimes\cO_{\ell_2}\ar[r]\ar[d]&\HH^0({q^*T_X}_{|\ell_2})\otimes\cO_{\ell_2}\ar[r]\ar[d]^{\simeq}&0\\
0\ar[r]&{T_{\cU_y}}_{|\ell_2}\ar[r]&{T_{\cU}}_{|\ell_2}\ar[r]&{q^*T_X}_{|\ell_2}\ar[r]&0
}
$$
from which the claimed surjectivity follows.

It follows that, for every $x \in X$, the variety $\cU_x$ admits two unsplit families of rational curves, $V_x^1$ and $V_x^2$, of degree one with respect to $D$ and such that $V_x^1$ is a covering family. In particular, this is the case for $x=y_0$. Then, by Lemma \ref{lem:det2}, we see that $Y_1$ admits two unsplit families of rational curves of $L$-degree one, $W^1$ and $W^2$, such that $W^1$ is a covering family. By Corollary \ref{cor:det3}, curves parametrized by these two families belong to $\cM$. In particular, for a point $y \in Y_1$ belonging to $\loc(W^2)$, $\cU_y^0=\cU_{Y_1}^0\cap \cU_{y}$ has two disjoint components. This contradicts the fact that, by assumption, $\cU_y$ is $\C^*$-equivariantly isomorphic to the blowup of a projective space endowed with a $\C^*$-action that has a unique inner fixed point component (see the Proof of Theorem \ref{thm:CP1rho2}).



\begin{thebibliography}{10}

\bibitem{ACO}
Marco Andreatta, Elena Chierici, and Gianluca Occhetta.
\newblock Generalized {M}ukai conjecture for special {F}ano varieties.
\newblock {\em Cent. Eur. J. Math.}, 2(2):272--293, 2004.

\bibitem{AW}
Marco Andreatta and Jaros{\l}aw~A. Wi{\'s}niewski.
\newblock On manifolds whose tangent bundle contains an ample subbundle.
\newblock {\em Invent. Math.}, 146(1):209--217, 2001.

\bibitem{BB76}
A.~Bia{\l}ynicki-Birula.
\newblock Some properties of the decompositions of algebraic varieties
  determined by actions of a torus.
\newblock {\em Bull. Acad. Polon. Sci. S\'{e}r. Sci. Math. Astronom. Phys.},
  24(9):667--674, 1976.

\bibitem{BB}
Andrzej Bia{\l}ynicki-Birula.
\newblock Some theorems on actions of algebraic groups.
\newblock {\em Ann. of Math. (2)}, 98:480--497, 1973.

\bibitem{BWW}
Jaros{\l}aw Buczy{\'n}ski, Jaros{\l}aw~A. Wi{\'s}niewski, and Andrzej Weber.
\newblock Algebraic torus actions on contact manifolds.
\newblock {\em J. Diff. Geom}, 121:227--289, 2022.

\bibitem{CARRELL}
James~B. Carrell.
\newblock Torus actions and cohomology.
\newblock In {\em Algebraic quotients. {T}orus actions and cohomology. {T}he
  adjoint representation and the adjoint action}, volume 131 of {\em
  Encyclopaedia Math. Sci.}, pages 83--158. Springer, Berlin, 2002.

\bibitem{De}
Olivier Debarre.
\newblock {\em Higher-dimensional algebraic geometry}.
\newblock Universitext. Springer-Verlag, New York, 2001.

\bibitem{Frantesi}
Alberto Franceschini.
\newblock {$\mathbb C^*$}-actions on rational homogeneous varieties and the
  associated birational maps.
\newblock Thesis, https://hdl.handle.net/11572/372282, 2023.

\bibitem{FS1}
Alberto Franceschini and Luis~E. Sol\'a~Conde.
\newblock Inversion maps and torus actions on rational homogeneous varieties.
\newblock {\em Geom. Dedicata}, 218:21, 2024.

\bibitem{FuHw}
Baohua Fu and Jun-Muk Hwang.
\newblock Special birational transformations of type {$(2,1)$}.
\newblock {\em J. Algebraic Geom.}, 27(1):55--89, 2018.

\bibitem{GHS}
Tom Graber, Joe Harris, and Jason Starr.
\newblock Families of rationally connected varieties.
\newblock {\em J. Amer. Math. Soc.}, 16(1):57--67 (electronic), 2003.

\bibitem{HH}
Jaehyun Hong and Jun-Muk Hwang.
\newblock Characterization of the rational homogeneous space associated to a
  long simple root by its variety of minimal rational tangents.
\newblock In {\em Algebraic geometry in {E}ast {A}sia---{H}anoi 2005},
  volume~50 of {\em Adv. Stud. Pure Math.}, pages 217--236. Math. Soc. Japan,
  Tokyo, 2008.

\bibitem{HMdef}
Jun-Muk Hwang and Ngaiming Mok.
\newblock Deformation rigidity of the rational homogeneous space associated to
  a long simple root.
\newblock {\em Ann. Sci. \'{E}cole Norm. Sup. (4)}, 35(2):173--184, 2002.

\bibitem{IVERSEN}
Birger Iversen.
\newblock A fixed point formula for action of tori on algebraic varieties.
\newblock {\em Inv. Math.}, 16(3):229--236, 1972.

\bibitem{Kan5}
Akihiro Kanemitsu.
\newblock Fano $5$-folds with nef tangent bundles.
\newblock {\em Math. Res. Lett.}, 24(5):1453--1475, 2017.

\bibitem{KKLV}
Friedrich Knop, Hanspeter Kraft, Domingo Luna, and Thierry Vust.
\newblock Local properties of algebraic group actions.
\newblock {\em Algebraische Transformationsgruppen und Invariantentheorie,
  Birkh{\"a}user Basel}, pages 63--75, 1989.

\bibitem{kollar}
J{\'a}nos Koll{\'a}r.
\newblock {\em Rational curves on algebraic varieties}, volume~32 of {\em
  Ergebnisse der Mathematik und ihrer Grenzgebiete. 3. Folge. A Series of
  Modern Surveys in Mathematics [Results in Mathematics and Related Areas. 3rd
  Series. A Series of Modern Surveys in Mathematics]}.
\newblock Springer-Verlag, Berlin, 1996.

\bibitem{Li17}
Qifeng Li.
\newblock Smooth projective horospherical varieties with nef tangent bundles.
\newblock {\em Indiana University Mathematics Journal}, 66(6):1879--1893, 2017.

\bibitem{Liu23}
Yingqi Liu.
\newblock A characterization of irreducible hermitian symmetric spaces of tube
  type by $\mathbb{C}^{*}$-actions.
\newblock {\em Preprint ArXiv: {\tt 2302.04472}}, 2023.

\bibitem{Mk3}
Ngaiming Mok.
\newblock Recognizing certain rational homogeneous manifolds of {P}icard number
  1 from their varieties of minimal rational tangents.
\newblock In {\em Third {I}nternational {C}ongress of {C}hinese
  {M}athematicians. {P}art 1, 2}, volume~2 of {\em AMS/IP Stud. Adv. Math., 42,
  pt. 1}, pages 41--61. Amer. Math. Soc., Providence, RI, 2008.

\bibitem{MOSWW}
Roberto Mu{\~n}oz, Gianluca Occhetta, Luis~E. Sol{\'a}~Conde, Kiwamu Watanabe,
  and Jaros{\l}aw~A. Wi\'sniewski.
\newblock A survey on the {C}ampana-{P}eternell conjecture.
\newblock {\em Rend. Istit. Mat. Univ. Trieste}, 47:127--185, 2015.

\bibitem{O}
Gianluca Occhetta.
\newblock A characterization of products of projective spaces.
\newblock {\em Canad. Math. Bull.}, 49(2):270--280, 2006.

\bibitem{WORS3}
Gianluca Occhetta, Eleonora~A. Romano, Luis~E. Sol\'{a}~Conde, and Jaros\l
  aw~A. Wi\'{s}niewski.
\newblock Small modifications of {M}ori dream spaces arising from
  {$\Bbb{C}^*$}-actions.
\newblock {\em Eur. J. Math.}, 8(3):1072--1104, 2022.

\bibitem{WORS6}
Gianluca Occhetta, Eleonora~A. Romano, Luis~E. Sol\'{a}~Conde, and Jaros\l
  aw~A. Wi\'{s}niewski.
\newblock Chow quotients of $\mathbb{C}^*$-actions.
\newblock {\em Preprint ArXiv: {\tt 2310.18623}}, 2023.

\bibitem{WORS5}
Gianluca Occhetta, Eleonora~A. Romano, Luis~E. Sol\'{a}~Conde, and Jaros\l
  aw~A. Wi\'{s}niewski.
\newblock Rational homogeneous spaces as geometric realizations of birational
  transformations.
\newblock {\em Rend. Circ. Mat. Palermo, II. Ser}, 72:3223--3253, 2023.

\bibitem{WORS1}
Gianluca Occhetta, Eleonora~A. Romano, Luis~E. Sol\'{a}~Conde, and Jaros\l
  aw~A. Wi\'{s}niewski.
\newblock Small bandwidth {$\Bbb C^*$}-actions and birational geometry.
\newblock {\em J. Algebraic Geom.}, 32(1):1--57, 2023.

\bibitem{OSWi}
Gianluca Occhetta, Luis~E. Sol\'a~Conde, and Jaros\l aw~A. Wi\'sniewski.
\newblock Flag bundles on {F}ano manifolds.
\newblock {\em J. Math. Pures Appl. (9)}, 106(4):651--669, 2016.

\bibitem{RW}
Eleonora~A. Romano and Jaros\l aw~A. Wi\'{s}niewski.
\newblock Adjunction for varieties with a {$\Bbb{C}^*$} action.
\newblock {\em Transform. Groups}, 27(4):1431--1473, 2022.

\bibitem{Tev}
Evgueni~A. Tevelev.
\newblock Projectively dual varieties.
\newblock {\em J. Math. Sci. (N. Y.)}, 117(6):4585--4732, 2003.

\bibitem{Thaddeus1996}
Michael Thaddeus.
\newblock Geometric invariant theory and flips.
\newblock {\em J. Amer. Math. Soc.}, 9(3):691--723, 1996.

\end{thebibliography}
\end{document}